\subjclass[2020]{Primary 16E50, 16U40} 
\keywords{Enoch's conjecture, Mittag-Leffler modules, almost free, $\mathrm{Add}(X)$}
\newcommand{\rest}{\upharpoonright}
\newcommand{\Ext}{\operatorname{Ext}}
\newcommand{\Ker}{\operatorname{Ker}}
\newcommand{\Img}{\operatorname{Im}}
\newcommand{\Coker}{\operatorname{Coker}}
\DeclareMathOperator{\cf}{cf}
\newcommand{\C}{\mathbf{C}}
\DeclareMathOperator{\s}{s}
\newcommand{\Add}{\mathrm{Add}}
\theoremstyle{plain}
\newtheorem{theorem}{Theorem}[section]
\newtheorem{lemma}[theorem]{Lemma}
\newtheorem*{conjecture}{Conjecture}
\newtheorem{proposition}[theorem]{Proposition}
\newtheorem{corollary}[theorem]{Corollary}
\newtheorem{fact}[theorem]{Fact}
\theoremstyle{definition}
\newtheorem{definition}[theorem]{Definition}
\newtheorem{notation}[theorem]{Notation}
\newtheorem{question}[theorem]{Question}
\theoremstyle{remark}
\newtheorem{remark}[theorem]{Remark}
\newtheorem{example}[theorem]{Example}
\def\s{\subseteq}
\newtheorem{claim}{Claim}[theorem]
\title[Almost free, perfect decomposition and Enoch's conjecture]{Almost free modules, perfect decomposition and Enochs's conjecture}
\author[Cort\'es-Izurdiaga]{Manuel Cort\'es-Izurdiaga}
\address[Cort\'es-Izurdiaga]{Departamento de Matemática Aplicada, Universidad de Málaga, 29071, Málaga, Spain}
\email{mizurdiaga@uma.es}
\author[Poveda]{Alejandro Poveda}
\address[Poveda]{Harvard University, Department of Mathematics and Center of Mathematical Sciences and Applications, Cambridge (MA), 02138, USA}
\email{alejandro@cmsa.fas.harvard.edu}
\thanks{The first author is partially supported by the Spanish Government under grants PID2020-113552GB-I00 which include FEDER funds of the EU., and by Junta de Andaluc\'{\i}a under grant P20-00770. The second author acknowledges support from the Department of Mathematics at Harvard University as well as from the Harvard Center of Mathematical Sciences and Applications.}
\begin{document}

\begin{abstract}
Given a module $X$ and a regular cardinal $\kappa$ we study various notions of $(\kappa,\Add(X))$-freeness and $(\kappa,\Add(X))$-separability. Bearing on appropriate set-theoretic assumptions, we construct a non-trivial $\kappa^+$-generated, $(\kappa^+,\Add(X))$-free and $(\kappa^+,\Add(X))$-separable module. Our construction allows $\kappa$ to be singular thus extending \cite[Theorem~4.7]{CortesGuilTorrecillas}. Bearing on similar set-theoretic assumptions, we characterize when every module $X$ has a perfect decomposition. As a subproduct  we show that Enoch's conjecture for classes $\Add(X)$ is consistent with ZFC -- a fact first proved by \v{S}aroch \cite{Saroch}. %We also generalize \cite[Theorem~4.7]{CortesGuilTorrecillas} by constructing an almost $(\aleph_{\omega+1},\Add(X))$-separable module. % that is non-trivial.
\end{abstract}

%\begin{amscode}33C50\end{amscode}

\maketitle

%\tableofcontents

\section{Introduction}\label{Introduction}
An infinite abelian group $G$ is called \emph{almost free} whenever  every subgroup of strictly smaller cardinality is free yet $G$ itself fails to be free. A natural question in infinite abelian group theory is which infinite cardinals accommodate almost free groups  \cite{Fuchs}.  Almost free groups where introduced  back in the late 30's by Kurosch \cite{Kurosch} who employed the terminology \emph{locally free groups}. Kurosch himself provided the first
construction of an almost free group of cardinality $\aleph_0$. A decade later, Higman \cite{Higman} produced an almost free group of cardinality $\aleph_1$. Ever since, major algebrist and set theorist have devoted stubborn efforts to produce a rich   and fine-structured theory of almost freeness  \cite{Shelahcompactness,Eklof, EklofMekler, Saroch}. The existence of almost free groups is an instance of a more general phenomenon called \emph{compactness} \cite{MagidorShelah}. In turn, the study of compactness is of paramount importance in infinitary combinatorics. This explains the narrow historical ties between the study of almost freeness, stationary reflection and large cardinal among others. 

\smallskip

Given $R$ a unital (non-necessarily commutative) ring  one may inquire whether this supports almost free (left $R$-)modules. The first thing to be made precise is what is meant for a module to be almost free. Indeed, if $R$ fails to be hereditary then there might be free modules none of whose non-zero submodules  are free -- thus the natural definition is faulty.  This issue is addressed in Eklof-Mekler's book \cite[Ch. IV]{EklofMekler} where a module $M$ is said to be \emph{$\kappa$-free} provided it carries a directed system of ${<}\kappa$-generated free submodules that is  \emph{big} in certain set-theoretic sense. Thus a \emph{module is almost free} whenever it is $\kappa$-generated and $\kappa$-free in the above-described sense.  In \cite[Ch. IV and VII]{EklofMekler} the authors offer an extensive study of almost freeness. Weakening of almost freeness were more recently considered by  Herbera and Trlifaj in their study of Mittag-Leffler modules \cite{HerberaTrlifaj}.

\smallskip

In this paper we investigate a notion of $\kappa$-freeness (namely, \emph{ $(\kappa,\Add(X))$-freeness}) relative to classes of the form $\Add(X)$. For a module $X$ the class $\Add(X)$ consists (modulo isomorphism)  of all modules  that are direct summands of $X^{(\mu)}$ for some cardinal $\mu$. Classes of this form subsume (among others) the projective and pure-projective modules. Thus  $(\kappa,\Add(X))$-freeness  yields a natural extension of Eklof-Mekler  $\kappa$-freeness. Notions of freeness akin to ours were previously considered by Herbera-Trlifaj \cite{HerberaTrlifaj} in the study of Mittag-Leffler modules and more recently by \v{S}aroch \cite{Saroch} in connection to the so-called  \emph{Enoch's conjecture}.

\smallskip

This paper shall also study the notion of \emph{almost separability} relative to $\Add(X)$. Recall that a module $M$ is called \emph{$\kappa$-separable} whenever every submodule $N$ of cardinality ${<}\kappa$ is contained in a ${<}\kappa$-generated free  direct summand of $M$. Assuming that $X$ is ${<}\kappa$-generated, a module $M$ will be called \emph{$(\kappa,\Add(X))$-separable} provided the  module $N$ can be taken from $\Add(X).$ This notion of separability was first studied by the first author, Guil and Torrecillas in \cite{CortesGuilTorrecillas} in connection to when certain categories of modules are closed under colimits of directed systems.

\smallskip

In the manuscript we discuss the interplay between $(\kappa,\Add(X))$-freeness and $(\kappa,\Add(X))$-separabi\-lity with  two major themes of research in  module theory.

\smallskip

The first topic regards the existence of perfect decompositions of a module $X$; to wit, when every module in $\Add(X)$ has a decomposition complementing direct summands. Assuming the Generalized Continuum Hypothesis, \cite[Corollary 5.13]{CortesGuilTorrecillas} shows that if $X$ has a perfect decomposition then there is a non-trivial\footnote{A $(\kappa,\Add(X))$-separable module is said to be trivial if  it  belongs to the class $\Add(X)$.} $(\kappa^+,\Add(X))$-separable module, being $\kappa$ the cardinality of a generating set of $X$. In this paper,  we improve the construction of \cite{CortesGuilTorrecillas} by 
showing that non-trivial $(\kappa^+,\Add(X))$-separable modules exist even for singular cardinals $\kappa$ (the construction in \cite{CortesGuilTorrecillas} only allows regular cardinals $\kappa$). Second, we characterize, modulo a set-theoretic hypothesis consistent with ZFC, when every module $X$ has a perfect decomposition. Granting this hypothesis, $X$ does not have a perfect decomposition if and only if there is a non-trivial $(\kappa^+,\Add(X))$-separable module  %prove the converse of this result using another set theoretical hypothesis concerning tree-like ladder systems on non-reflecting stationary sets 
(Theorem \ref{thm:perfectdecom}).

\smallskip

The second topic under consideration regards the so-called Enoch's Conjecture. For a class of modules $\mathcal{X}$ the conjecture asserts that $\mathcal{X}$ is  closed under direct limits provided it is covering. Just recently J. \v Saroch \cite{Saroch} has proved (modulo an extra set-theoretic hypothesis) the consistency of the conjecture  for classes of the form $\Add(X)$ \cite{Saroch}. Saroch's argument is based on the construction of a certain $(\kappa,\Add(X))$-free module. In what we are concerned,  here we establish the precise relationship between the existence of a non-trivial $(\kappa,\Add(X))$-separable module and the existence of $\Add(X)$-covers (see Lemma~\ref{ExistenceofAddXcovers}). As a bi-product we provide an alternative proof of Enoch's conjecture for $\Add(X)$ using separable modules. Like \v{S}aroch's, our argument also requires an extra set-theoretic assumption.

\smallskip

The following is a succinct account of the paper's contents. In Section~\ref{sec: prelimminaries} we provide all the pertinent module and set-theoretic prelimminaries and notations. Section~\ref{sec: free} analyzes  various notions of almost freeness relative to the class $\Add(X)$ and connects them with the main device used in the paper -- $\Add(X)$-filtrations. A key role will be played by Lemma \ref{WhenMistrivial2} which will allow us to elucidate whether an almost free module is trivial (i.e., whether it belongs to $\Add(X)$). In Section~\ref{sec: separable} we will shift our attention to $(\kappa,\Add(X))$-separable modules. Here we prove that under suitable set-theoretic assumptions there are $\kappa^+$-generated non-trivial $(\kappa^+, \Add(X))$-separable (and $(\kappa^+, \Add(X))$-free) modules, even when $\kappa$ is a singular cardinal.  %  -- this extends one of the main results in \cite{CortesGuilTorrecillas}. 
Finally, Section~\ref{sec: applications} is devoted to applications. Here we show: first, under a consistent set-theoretic assumption, the existence of a non-trivial $(\kappa,\Add(X))$-free/separable module is equivalent to $X$ not having a perfect decomposition; second, under the same assumptions, Enoch's Conjecture holds for classes of the form $\Add(X)$.  This provides an alternative proof  to \v{S}aroch's \cite{Saroch}; third, the class of all left $R$-modules with $\mathcal Q$-Mittag-Leffler dimension less than or equal to $n$ (\cite{CortesProducts}) satisfies Enoch's conjecture, thus extending \cite[Theorem 2.6]{TrlifajYassine} to the non-flat setting.

\section{Preliminaries}\label{sec: prelimminaries}

The cardinality of a set $A$ will be denoted by $|A|$ and its power set by $\mathcal P(A)$. The restriction of a map $f:A \rightarrow B$ to a subset $A'$ of $A$ will be denoted $f\rest A'$. As customary, infinite cardinals will be denoted by  Greek letters, such as $\kappa,\lambda, \delta$, etc. The cofinality of an infinite cardinal $\kappa$ will be denoted by $\cf(\kappa)$. Recall that a cardinal $\kappa$ is  \emph{regular} if $\kappa=\cf(\kappa)$; otherwise $\kappa$ is called \textit{singular}.  Let $\beta$ be a limit ordinal with uncountable cofinality. A set $C\s \beta$ is called \emph{a club} if it is closed and unbounded with respect to the order topology of $\beta$. A set $S\s \beta$ is called \emph{stationary} if $S\cap C\neq \emptyset$ for all clubs $C\s \beta$. The set of all stationary subsets of $\beta$ will be denoted by $\mathrm{NS}_\beta^+$. A typical stationary set is $E^\kappa_\lambda:=\{\alpha<\kappa\mid \mathrm{cf}(\alpha)=\lambda\}$ whenever $\lambda<\kappa$ are both regular cardinals. A stationary set $S\s \beta$  \emph{reflects} if there is $\alpha<\beta$ with $\cf(\alpha)>\omega$ such that $S\cap \alpha$ is stationary in $\alpha$. We will say that $E$ reflects at $\alpha$. %Another important set-theoretic object used here is 
 $I[\kappa^+]$ will denote \emph{Shelah's Approchability ideal} on $\kappa^+$ (cf. \cite{Shelah, Eisworth}).
\smallskip

Through the manuscript $R$ will stand for a (non-necessarily commutative) ring with unit. By a \emph{module} we mean a left $R$-module -- i.e., a member of  $R$-$\mathrm{Mod}$. 

\smallskip

Let $\kappa$ be an infinite (non-necessarily regular) cardinal. A module $M$ is called \emph{$\kappa$-generated} if it has a set of generators $G\s M$ with size $\kappa$; to wit, every $x\in M$ can be written as $\sum_{i=0}^n r_i y_i$ for $r_i\in R$, $y_i\in G$ and $n<\omega.$ Similarly, $M$ will be called \emph{${<}\kappa$-generated} if it admits  a set of generators of cardinality ${<}\kappa.$ We will denote by $\lambda_M$ the minimum of the set
$$\{\lambda\mid \text{There is a set of generators for $M$ of cardinality $\lambda$}\}.$$
Thus $M$ will be ${<}\kappa$-generated for an infinite cardinal $\kappa$ if and only if  $\lambda_M<\kappa$. Through the paper we will use $``M$ is ${<}\kappa$-generated'' and $``\lambda_X<\kappa$'' interchangeably as often times the latter phrasing becomes more convenient than the former.

The module $M$ is $\kappa$-\textit{presented} (resp. $<\kappa$-presented) if it is $\kappa$-generated (resp. $<\kappa$-generated) and has a free presentation with $\kappa$-generated kernel. 

\smallskip

Filtrations of modules will also play a prominent role in this work:

\begin{definition}\label{def: filtration}
Let $M$ be a module and $\kappa$ a regular cardinal. A \emph{$\kappa$-filtration} of $M$ is a sequence $\langle M_\alpha\mid \alpha<\kappa\rangle$ consisting of ${<}\kappa$-generated submodules of $M$ such that
\begin{enumerate}
    \item $M_\alpha\s M_{\beta}$ for all $\alpha<\beta$,
    \item $M_{\alpha}=\bigcup_{\beta<\alpha} M_\beta$ whenever $\beta$ is a limit ordinal,
    \item $M=\bigcup_{\alpha<\kappa} M_\alpha.$
\end{enumerate}
\end{definition}

The class $\Add(X)$, where $X$ is a fixed module, will have a promient role in this paper. Recall that if $\mathcal X$ is a class of modules, $\mathrm{Sum}(\Add(X))$ is the class of all direct sums of modules from $\mathcal X$ and
$$\mathrm{Add}(\mathcal X)=\{N\in \text{$R$-$\mathrm{Mod}$}\mid N\lesssim_{\oplus} X\, \text{for some }X \in \textrm{Sum}(\mathcal X) \},$$ where $N\lesssim_{\oplus} X$ is a shorthand for $``N$ is isomorphic to a direct summand of $X$''. Clearly, $\Add(\mathcal X)$ is closed both under direct summands and direct sums. If $\mathcal X$ consists of one single module, we will write $\Add(X)$ and $\textrm{Sum}(X)$ instead of $\Add(\{X\})$ and $\textrm{Sum}(\{X\})$.  %(here $\leq_{\oplus}$ stands for direct summand). 

\smallskip

Often times an infinite cardinal %\ale{Does this work for $\aleph_0$?} 
$\kappa$ will be fixed and we will consider the classes $\textrm{Sum}_\kappa(\mathcal X)$ consisting of all direct sums of less than $\kappa$ modules belonging to $\mathcal X$ and  $\mathrm{Add}_\kappa(\mathcal X)$, of all modules isomorphic to a direct summand of a module from $\textrm{Sum}_\kappa(\mathcal X)$. Notice that, if $\kappa$ is regular and $\lambda_X < \kappa$, the modules in $\Add_\kappa(X)$ are the $<\kappa$-generated modules in $\Add(X)$. For instance, $\Add(R)$ is the class of all projective modules. Similarly, if $M$ is the direct sum of all finitely presented modules up to isomorphism, $\Add(M)$ is the class of all pure-projective modules. More generally, if $\kappa$ is an uncountable regular cardinal and $M$ is the direct sum of all $<\kappa$-presented modules up to isomorphism, then $\Add(M)$ consists of all $\kappa$-pure-projective modules, i.e., those modules $N$ which are projective with respect to all $\kappa$-pure exact sequences \cite[Proposition 5.1]{CortesGuilTorrecillas}.

\smallskip 

%One classical fact about $\kappa$-generated modules in $\Add(X)$ is \emph{Eilenberg's trick}:
%\begin{fact}[Eilenber's trick, {\cite[Lemma 3.1]{GobelTrlifaj}}]\label{fact: eilenbberg's}
%    If $M \in \Add(X)$ is a $\kappa$-generated module then $M \oplus X^{(\kappa)} \cong X^{(\kappa)}.$
%\end{fact}

A classical result we shall bear on is Walker's theorem \cite[Theorem~26.1]{AndersonFuller} -- this asserts that every direct summand of a module which is a direct sum of $\kappa$-generated modules is itself a direct sum of $\kappa$-generated modules.  In particular, if $X$ is a direct sum of $\kappa$-generated modules then so is every module in $\Add(X)$. Walker's result is a generalization of Kaplansky's theorem concerning projective modules. In turn, Walker's  result can be easily obtained for $\kappa$-presented modules:
\begin{lemma}\label{l:WalkerForPresented}
Let $\kappa$ be a regular cardinal and $M$ be a module which is a direct sum of $\kappa$-presented modules. Then every direct summand of $M$ is a direct sum of $\kappa$-presented modules.
\end{lemma}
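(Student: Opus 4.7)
The plan is to reduce the statement to the fact that any $\kappa$-generated direct summand of a direct sum of at most $\kappa$ many $\kappa$-presented modules is itself $\kappa$-presented. Write $M=\bigoplus_{i\in I}M_i$ with each $M_i$ being $\kappa$-presented, and let $N$ be a direct summand of $M$. Walker's theorem applied to $N$ yields a decomposition $N=\bigoplus_{j\in J}N_j$ with each $N_j$ being $\kappa$-generated. Since a direct summand of a direct summand is a direct summand, each such $N_j$ is itself a direct summand of $M$. Hence it suffices to prove that every $\kappa$-generated direct summand of $M$ is $\kappa$-presented.

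Fix a $\kappa$-generated direct summand $N_j$ of $M$ together with a retraction $\pi\colon M\to N_j$. Since each element of $M$ is supported on finitely many indices and $N_j$ admits a generating set of cardinality at most $\kappa$, there exists $I_j\subseteq I$ with $|I_j|\le \kappa$ such that $N_j\subseteq M':=\bigoplus_{i\in I_j}M_i$. The restriction $\pi\rest M'$ remains a retraction onto $N_j$, so $N_j$ is a direct summand of $M'$. Concatenating free $\kappa$-presentations of the $M_i$ for $i\in I_j$, and using $\kappa\cdot\kappa=\kappa$, exhibits $M'$ as $\kappa$-presented.

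It remains to verify the sublemma that a direct summand of a $\kappa$-presented module is $\kappa$-presented. Writing $M'=N_j\oplus L$, both summands are $\kappa$-generated since $M'$ is. Pick surjections $F\twoheadrightarrow N_j$ and $F'\twoheadrightarrow L$ from free modules of rank at most $\kappa$; the direct sum $F\oplus F'\twoheadrightarrow M'$ is then a free presentation of $M'$ whose free term has rank at most $\kappa$. By the infinite-cardinal version of Schanuel's lemma applied to any chosen $\kappa$-presentation of $M'$, the kernel of this presentation must be $\kappa$-generated; but that kernel splits as the direct sum of the kernels of $F\twoheadrightarrow N_j$ and $F'\twoheadrightarrow L$, so each is $\kappa$-generated. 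Consequently $N_j$ is $\kappa$-presented.

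I expect the only subtle point to be this last sublemma, as it is the only place where one must invoke Schanuel's lemma to control the size of the kernel under a change of presentation; the rest of the argument is bookkeeping with supports, Walker's theorem, and the arithmetic identity $\kappa\cdot\kappa=\kappa$.
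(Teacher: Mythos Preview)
Your proof is correct and follows essentially the same route as the paper: apply Walker's theorem to decompose the summand into $\kappa$-generated pieces, embed each piece into a sub-sum indexed by at most $\kappa$ indices, and conclude that piece is $\kappa$-presented as a direct summand of a $\kappa$-presented module. The only difference is that the paper takes the sublemma ``a direct summand of a $\kappa$-presented module is $\kappa$-presented'' for granted, whereas you spell out the Schanuel argument explicitly.
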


\begin{proof}
Let $K$ be a direct summand of $M$. By Walker's Theorem, $K$ is a direct sum of $\kappa$-generated modules. Note that every $\kappa$-generated direct summand $K'$ of $K$ is $\kappa$-presented: If $M=\bigoplus_{i \in I}M_i$ for a family $\{M_i \mid i \in I\}$ of $\kappa$-presented submodules of $M$,  $K' \leq \bigoplus_{j \in J}M_j$ for some subset $J$ of $I$ with cardinality $\kappa$. Then $K'$ is a direct summand of the $\kappa$-presented module $\bigoplus_{j \in J}M_j$ and it is itself $\kappa$-presented.
\end{proof}

%By $\lambda_M$ we  denote the least cardinal $\lambda$ for which there is a set of generators for $M$ of cardinality $\lambda$.

%Note that $M$ is not necessarily the direct sum of the members of its $\kappa$-filtration. However, under special circumstancies it is possible to ensure that $M$ is a direct sum of submodules of the given filtration. 
The next (truly useful) fact will be used repeatedly through the paper.  
\begin{fact}[{\cite[Lemma~1.1]{Cortes17}}]\label{fact: when M is a direct sum}
    Suppose that $\langle M_\alpha\mid \alpha<\kappa\rangle$ is a $\kappa$-filtration of a module $M$ and that for each $\alpha<\kappa$ there is $N_\alpha\leq M$ such that $M_{\alpha+1}=M_\alpha\oplus N_\alpha$. Then, $M=\bigoplus_{\alpha<\kappa} N_\alpha$.
\end{fact}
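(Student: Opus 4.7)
The plan is to prove the stronger statement that $M_{\alpha}=\bigoplus_{\beta<\alpha} N_{\beta}$ for every $\alpha\leq \kappa$ by transfinite induction, and then specialize to $\alpha=\kappa$. Observe first that clause~(2) of Definition~\ref{def: filtration}, applied at $\alpha=0$ (an empty union), forces $M_{0}=0$; this provides a clean base case. The successor step is immediate from the hypothesis: if $M_{\gamma}=\bigoplus_{\beta<\gamma} N_{\beta}$, then $M_{\gamma+1}=M_{\gamma}\oplus N_{\gamma}=\bigoplus_{\beta<\gamma+1}N_{\beta}$.

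For the limit step (and, ultimately, for $\alpha=\kappa$) I would argue in two parts. First, that $M_{\alpha}$ equals the submodule $\sum_{\beta<\alpha} N_{\beta}$: this is formal from $M_{\alpha}=\bigcup_{\gamma<\alpha}M_{\gamma}$ together with the inductive description of each $M_{\gamma}$. Second, that this sum is direct. The latter is where I would concentrate the care. Suppose $\sum_{i=1}^{n} x_{\beta_i}=0$ with $x_{\beta_i}\in N_{\beta_i}$ and $\beta_{1}<\cdots<\beta_{n}<\alpha$; then every $x_{\beta_{i}}$ lies in $M_{\beta_{n}+1}$, which by the successor step (hence by the inductive hypothesis) decomposes as $\bigoplus_{\beta\leq \beta_{n}} N_{\beta}$, so each $x_{\beta_{i}}$ must vanish. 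This also shows that $M=\bigcup_{\alpha<\kappa}M_{\alpha}=\bigoplus_{\alpha<\kappa} N_{\alpha}$ once the induction is complete.

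The only nontrivial point is the directness at the limit stages, but as noted this follows from the finite-support character of elements of $\sum_{\beta<\alpha} N_{\beta}$, reducing the verification to a successor stage already handled. The rest is bookkeeping, so I expect the argument to fit in a few lines.
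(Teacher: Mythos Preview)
Your argument is correct; the transfinite induction with the directness check reduced to a successor stage is the standard route, and your remark that $M_0=0$ (read from clause~(2) at the empty union) is needed for the conclusion as stated. Note, however, that the paper does not give its own proof of this fact: it is quoted from \cite[Lemma~1.1]{Cortes17} and used as a black box, so there is nothing in the paper to compare against beyond confirming that your proof establishes exactly the statement being cited.
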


%In the forthcoming sections we will be consider various notions of \emph{freeness} and \emph{separability} relative to the class $\Add(X)$. A module $M$ will lie trivially in this classes whenever $M\in \Add(X)$. The next lemma characterizes when a module is trivial relative to $\Add(X)$:

A direct system of modules $\mathcal{M}$ where the underlying poset is well-ordered is called a \emph{well ordered system}. A class $\mathcal{X}$ of modules is called \emph{closed under direct limits} (resp. \emph{closed under well-ordered limits}) if for any direct (resp. well-ordered) system $\mathcal{M}$  consisting of modules in $\mathcal{X}$,  $\varinjlim\mathcal{M}\in\mathcal{X}.$
%It is known that if a class of modules $\mathcal X$ contains the limits of all well ordered direct systems of modules in $\mathcal X$ then it is closed under direct limits. 

\begin{lemma}\label{lemma: prelimminaries well-ordered limits}
Let $\mathcal X$ be a class of modules. The following assertions are equivalent:
\begin{enumerate}
\item $\mathcal X$ is closed under direct limits.

\item $\mathcal X$ is closed under well ordered direct limits.

\item For any infinite cardinal $\kappa$, $\mathcal X$ is closed under well ordered direct limits of cardinality greater than or equal to $\kappa$.

\item There exists an infinite cardinal $\kappa$ such that $\mathcal X$ is closed under well ordered direct limits of cardinality greater than or equal to $\kappa$.
\end{enumerate}
\end{lemma}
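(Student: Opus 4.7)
The implications $(1) \Rightarrow (2) \Rightarrow (3) \Rightarrow (4)$ are formal, obtained by successively restricting the family of admissible direct systems (and specializing the quantifier on $\kappa$ in passing from (3) to (4)). The content of the statement therefore lies in $(4)\Rightarrow(1)$, which I would establish in two stages: first $(4)\Rightarrow(2)$ by a padding argument, and then $(2)\Rightarrow(1)$ by transfinite induction on the size of the indexing poset.

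For $(4)\Rightarrow(2)$, given a well-ordered chain $\langle M_\alpha \mid \alpha<\lambda\rangle$ in $\X$ of arbitrary length, my plan is to pad it to a chain long enough to invoke (4) while preserving the colimit. Fix a cardinal $\mu\geq\kappa$ with $\cf(\mu)=\cf(\lambda)$ --- such $\mu$ always exists in $\mathrm{ZFC}$ (e.g.\ writing $\kappa=\aleph_\gamma$, one may take $\mu:=\aleph_{\gamma+\cf(\lambda)}$). Choose a strictly increasing cofinal sequence $\langle \lambda_\xi \mid \xi<\cf(\lambda)\rangle$ in $\lambda$ and a non-decreasing surjection $p\colon \mu\twoheadrightarrow\cf(\lambda)$, available because $\cf(\mu)=\cf(\lambda)$. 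The reindexed chain $\langle M_{\lambda_{p(\beta)}} \mid \beta<\mu\rangle$ lies in $\X$, has cardinality $\mu\geq\kappa$, and shares its colimit with the original chain (since the reindexing is cofinal in $\lambda$); applying (4) places this common colimit in $\X$.

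For $(2)\Rightarrow(1)$, let $(M_i)_{i\in I}$ be a directed system in $\X$ and argue by transfinite induction on $\nu:=|I|$. The case of finite $\nu$ is trivial, as directed finite posets have a maximum. For $\nu=\aleph_0$, the countable directed $I$ admits a cofinal $\omega$-chain, whose colimit coincides with $\li_i M_i$ and lies in $\X$ by (2). For $\nu>\aleph_0$, enumerate $I=\{i_\xi \mid \xi<\nu\}$ and construct, by transfinite recursion, an increasing family $\langle J_\alpha \mid \alpha\leq\nu\rangle$ of directed sub-posets of $I$ satisfying $\{i_\xi \mid \xi<\alpha\}\subseteq J_\alpha$, $|J_\alpha|\leq|\alpha|+\aleph_0$, and $J_\alpha=\bigcup_{\beta<\alpha}J_\beta$ at limits. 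At successor stages, $J_{\alpha+1}$ is obtained from $J_\alpha\cup\{i_\alpha\}$ by iteratively adjoining an upper bound for every pair, in a countably iterated closure process. The inductive hypothesis yields $N_\alpha:=\li_{j\in J_\alpha}M_j\in\X$ for each $\alpha<\nu$; and since $J_\nu=I$ is the union of the $J_\alpha$'s, the identification $\li_{i\in I}M_i=\li_{\alpha<\nu}N_\alpha$ together with (2) closes the induction.

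The main delicacy will be the transfinite construction of the $J_\alpha$'s: they must simultaneously be directed sub-posets of $I$, continuous at limits, and of cardinality strictly less than $\nu$. The crucial observation is that directedness is a \emph{pairwise} condition, so the required closure is under pairwise upper bounds --- a countably iterated operation that preserves the infinite cardinal bound $|J_\alpha|\leq|\alpha|+\aleph_0<\nu$ throughout the recursion.
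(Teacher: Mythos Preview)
Your proof is correct and, for the only nontrivial step $(4)\Rightarrow(2)$, follows essentially the same padding idea as the paper: both arguments produce a cardinal $\mu\geq\kappa$ with the right cofinality and reindex the given chain along a cofinal map $\mu\to\lambda$ (the paper first reduces to $\lambda$ regular and then builds the reindexing via an auxiliary increasing map $\gamma\colon\lambda\to\mu$ and its ``inverse'' $\sigma(\alpha)=\sup\{\delta\mid\gamma(\delta)\leq\alpha\}$, whereas you work directly with $\cf(\lambda)$ and a non-decreasing surjection; these are cosmetic variants of the same trick). The only genuine difference is that the paper dismisses $(2)\Rightarrow(1)$ as ``well known'' while you spell out the standard proof by transfinite induction on $|I|$, building a continuous chain of small directed sub-posets by closing under pairwise upper bounds; your account of this is accurate and self-contained, which is a plus, though not something the paper's argument requires.
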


\begin{proof}
(1) $\Leftrightarrow$ (2). Well known.

(2) $\Rightarrow$ (3) $\Rightarrow$ (4). Clear.

(4) $\Rightarrow$ (2). Take a direct system of modules belonging to $\mathcal X$, $(X_\delta,u_{\delta\varepsilon}\mid \delta \leq \varepsilon < \lambda)$, for some cardinal $\lambda < \kappa$ and denote by $(X,u_\delta \mid \delta < \lambda)$ its direct limit. We may assume that $\lambda$ is regular since, otherwise, $\lambda$ has a cofinal subset of cardinality some regular cardinal.

Let $\mu \geq \kappa$ be a cardinal with $\cf(\mu)=\lambda$. Take a strictly increasing map $\gamma:\lambda \rightarrow \mu$ such that $\{\gamma(\delta) \mid \delta < \lambda\}$ is unbounded in $\mu$. Let $\sigma: \mu \rightarrow \lambda$ be the map defined by $\sigma(\alpha)=\sup\{\delta < \lambda \mid \gamma(\delta) \leq \alpha\}$ for any $\alpha < \mu$. Set $Y_\alpha = X_{\sigma(\alpha)}$ and $v_{\alpha\beta}=u_{\sigma(\alpha)\sigma(\beta)}$ for each $\alpha \leq  \beta < \lambda$ so that we get a well ordered direct system $(Y_\alpha,v_{\alpha\beta}\mid \alpha \leq \beta < \mu)$. Since $\sigma$ is increasing, its image $C$ is unbounded in $\lambda$ and the direct limit of $(Y_\alpha,v_{\alpha\beta}\mid \alpha \leq \beta < \mu)$ is precisely $(X,u_\delta \mid \delta \in C)$. By (4), $X$ belongs to $\mathcal X$.
\end{proof}

In the last section of this paper we will be concerned with \emph{Enoch's conjecture}. The two backbone notions underpinning this conjecture are \emph{precovering} and \emph{covering} classes of modules.
\begin{definition}
Let $\mathcal{X}$ be a class of modules. We say that $\mathcal{X}$ is:
\begin{enumerate}
    \item \emph{Precovering}: If every module $M$ posseses an \emph{$\mathcal{X}$-precover}; namely, a morphism $f\colon X\rightarrow M$ with $X\in \mathcal{X}$ such that $\mathrm{Hom}(Y,f)$ is surjective for all $Y\in\mathcal{X}.$ 
    \item \emph{Covering}: If every module $M$ posseses an \emph{$\mathcal{X}$-cover}; namely, an $\mathcal{X}$-precover $f\colon X\rightarrow M$ such that every   $g\in\mathrm{End}(X)$ is an automorphism if $f\circ g=f$.
\end{enumerate}
   %  A class of modules $\mathcal{K}$ is called \emph{precovering} if every module $M$ posseses a \emph{$\mathcal{K}$-precover}; namely, a morphism $\varphi\colon C\rightarrow M$ with $C\in \mathcal{K}$ such that $\mathrm{Hom}(D,f)$ is surjective for all $D\in\mathcal{K}.$ In addition,  $\mathcal{K}$ is called \emph{covering} if every module $M$ admits a \emph{$\mathcal{K}$-cover}; namely, a $\mathcal{K}$-precover $\varphi\colon C\rightarrow M$ such that for each $g\in\mathrm{End}(C)$, $fg=f$ is an automorphism of $C$.
\end{definition}
The key classical result bridging these concepts is Enoch's theorem \cite[Theorem~5.31]{GobelTrlifaj} asserting that any precovering class closed under direct limits is covering. The converse of this assertion is the so-called \emph{Enoch's conjecture}. Namely,
\begin{conjecture}[Enochs]
    Every covering class of modules is closed under direct limits.
\end{conjecture}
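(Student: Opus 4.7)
The plan is to argue by contradiction. By Lemma~\ref{lemma: prelimminaries well-ordered limits} it suffices to refute closure under some well-ordered direct system. So suppose $\mathcal{X}$ is covering and fix a well-ordered system $(X_\alpha, u_{\alpha\beta})_{\alpha \leq \beta < \lambda}$ of modules in $\mathcal{X}$, with structural maps $u_\alpha \colon X_\alpha \to X$, whose direct limit $X = \varinjlim X_\alpha$ lies outside $\mathcal{X}$, for $\lambda$ a regular cardinal. The covering hypothesis yields an $\mathcal{X}$-cover $f \colon Y \to X$, and the goal is to derive the contradiction $X \in \mathcal{X}$.

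My strategy is to leverage the rigidity of the cover to produce a splitting of $f$. By the precover property each $u_\alpha$ factors through $f$ as $u_\alpha = f \circ g_\alpha$ for some $g_\alpha \colon X_\alpha \to Y$. I would attempt to construct, by transfinite recursion on $\alpha<\lambda$, a coherent family $\{g_\alpha\}_{\alpha<\lambda}$ satisfying $g_\beta \circ u_{\alpha\beta} = g_\alpha$ for all $\alpha\leq \beta$. At a successor stage one picks a candidate $g_{\alpha+1}$ by precover-lifting $u_{\alpha+1}$ and then attempts to correct the defect $\delta := g_{\alpha+1}\circ u_{\alpha,\alpha+1}-g_\alpha \colon X_\alpha\to \ker f$ by exploiting the cover property, namely that every $h\in \End(Y)$ with $f\circ h=f$ is an automorphism of $Y$. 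If such a coherent family can be assembled, then the induced morphism $g \colon X \to Y$ satisfies $f \circ g = \mathrm{id}_X$, so that $X$ is a direct summand of $Y \in \mathcal{X}$; combined with the classical fact that covering classes are closed under direct summands (lift $\mathrm{id}_N$ through a cover of $N$ using any containing module in $\mathcal X$ and apply the cover's automorphism clause), this forces $X \in \mathcal{X}$, contradicting the choice of $X$.

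The main obstacle, and the reason Enochs' conjecture has remained open in the full generality asked here, lies exactly at the defect-correction step. The cover property controls only those $h \in \End(Y)$ that \emph{already} satisfy $f\circ h=f$, whereas the defect $\delta$ is merely a morphism $X_\alpha \to \ker f$ with no natural extension to an endomorphism of $Y$, and no obvious mechanism to realize it as a restriction of some $h$ lifted by the cover property. Moreover, at limit stages one needs the earlier corrections to assemble coherently with the direct limit structure, but direct limits of automorphisms of $Y$ need not stay automorphisms. In the specialized setting $\mathcal{X} = \Add(X)$ pursued in the sequel, Walker's theorem together with the set-theoretic constructions of non-trivial almost separable modules (Section~\ref{sec: separable}) furnish a structural substitute which bypasses this obstruction. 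In the absence of such structural input on an arbitrary covering class $\mathcal{X}$, a genuine attack would seem to demand either a new decomposition theorem for covers of direct limits, or a substantial additional hypothesis of set-theoretic or cotorsion-theoretic nature.
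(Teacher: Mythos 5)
There is no proof to compare against here, because the statement you were given is precisely Enochs's \emph{conjecture}: the paper states it as an open problem and never claims a proof in this generality. What the paper actually establishes are partial results: consistency of the conjecture for classes of the form $\Add(X)$ under the hypothesis that there is a proper class of cardinals satisfying $(\star)_{\kappa,\lambda}$ (via the construction of non-trivial $(\kappa^+,\Add(X))$-separable modules, Lemma~\ref{ExistenceofAddXcovers} on locally split $\Add(X)$-precovers, and Theorem~\ref{thm:perfectdecom}), and a ZFC theorem (Theorem~\ref{thm: Enochs for X}) for classes closed under $\aleph_1$-free modules, direct sums and direct summands, proved by an $\aleph_1$-free module $L$ plus a cardinality argument on $\Ext^1$ in the style of \v{S}aroch --- none of which resembles your lifting strategy.

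Your proposal, as you yourself concede, is not a proof: the entire argument hinges on assembling a coherent family $\{g_\alpha\}_{\alpha<\lambda}$ with $g_\beta\circ u_{\alpha\beta}=g_\alpha$, and the cover property gives you no handle on the defect $\delta\colon X_\alpha\to\Ker f$, since it only constrains endomorphisms $h$ of $Y$ that already satisfy $f\circ h=f$; there is no mechanism to realize $\delta$ as (the restriction of) such an $h$, and even granting successor-stage corrections, coherence at limit stages is a second independent obstruction. So the gap is exactly where you located it, and it is fatal: nothing is proved. Two smaller remarks. First, your auxiliary claim that covering classes are closed under direct summands is correct and can be made precise (if $N\leq_\oplus Y\in\mathcal X$ and $f\colon C\to N$ is an $\mathcal X$-cover, lifting the projection $Y\to N$ gives $g$ with $f\circ(g\circ i)=\mathrm{id}_N$, then $e:=(g\circ i)\circ f$ satisfies $f\circ e=f$ and $e^2=e$, so the cover property forces $e=\mathrm{id}_C$ and $f$ is an isomorphism). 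Second, if you want to engage with what is actually provable, the productive move is not to attack the general conjecture by lifting along the cover, but to impose structure on $\mathcal X$ --- closure under $\aleph_1$-free modules as in Theorem~\ref{thm: Enochs for X}, or $\mathcal X=\Add(X)$ with the separability machinery of Section~\ref{sec: separable} --- which is exactly the route the paper takes.
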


\section{$(\kappa,\Add(X))$-free modules}\label{sec: free}

Related to an infinite regular cardinal $\kappa$, there are several notions of $\kappa$-free modules which have been extensively studied in the category of abelian groups \cite{EklofMekler}: $\kappa$-free in the weak sense, $\kappa$-free, strongly $\kappa$-free and $\kappa$-separable. In this paper we are interested in these notions but relative to the class $\Add(X)$ for a fixed module $X$. We begin with the almost free modules in the weak sense which were introduced in \cite[p. 89]{EklofMekler}. We consider here the slightly more general approach of \cite[Definition 2.2]{ProductsOfFlats} which is based on \cite[Definition 2.5]{HerberaTrlifaj}:

\begin{definition}[$(\kappa,\mathcal X)$-freeness in the weak sense]\label{d:WeakFree}
    Let $\kappa$ be an uncountable regular cardinal and $\mathcal X$, a class of modules. We say that a module $M$ is \textit{$(\kappa,\mathcal X)$-free in the weak sense} if there is a direct system $\mathcal S$ of submodules of $M$ belonging to $\mathcal X$ such that any subset of $M$ of cardinality smaller than $\kappa$ is contained in an element of $\mathcal S$. We say that $M$ is trivial if $M$ belongs to $\mathcal X$.
\end{definition}

Notice that in this definition the modules in $\mathcal S$ need not be $<\kappa$-generated as in the definition by Eklof and Mekler. But if we take $\mathcal X$ the class of all $<\kappa$-generated free modules, then the $(\kappa,\mathcal X)$-free modules in the weak sense are the $\kappa$-free modules in the weak sense of Eklof and Mekler.

\smallskip

The relationship between weak almost freeness and filtrations is the following:

\begin{proposition}\label{p:WeakFreeAndFiltrations}
    Let $X$ be a module and $\kappa$ an uncountable regular cardinal. Let $M$ be any module.
    \begin{enumerate}
        \item If $M$ has a filtration, $( M_\alpha \mid \alpha < \kappa )$, with $M_{\alpha+1} \in \Add(X)$ for each $\alpha < \kappa$, then $M$ is $(\kappa,\Add(X))$-free in the weak sense.
        
        \item If $\kappa > \lambda_X$, $M$ is $\kappa$-generated and is $(\kappa,\Add(X))$-free in the weak sense, then it has a $\kappa$-filtration, $( M_\alpha \mid \alpha < \kappa )$, with $M_{\alpha+1} \in \Add_\kappa(X)$ for each $\alpha < \kappa$. In particular, $M$ is $(\kappa,\Add_\kappa(X))$-free in the weak sense.
    \end{enumerate}
\end{proposition}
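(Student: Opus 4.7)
My plan is to address the two implications separately; the first is essentially immediate, while the second requires one to ``trim'' the modules provided by the witnessing direct system down to $<\kappa$-generated pieces.

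For (1), the natural candidate for the witnessing direct system is $\mathcal{S}=\{M_{\alpha+1}\mid \alpha<\kappa\}$, linearly ordered by inclusion. Each element lies in $\Add(X)$ by hypothesis, and $\mathcal{S}$ is directed since it is a chain. Given $Y\subseteq M$ with $|Y|<\kappa$, each $y\in Y$ belongs to some $M_{\alpha_y}$, and by regularity of $\kappa$ the ordinal $\gamma:=\sup\{\alpha_y+1\mid y\in Y\}$ lies below $\kappa$; hence $Y\subseteq M_\gamma\subseteq M_{\gamma+1}\in\mathcal{S}$, irrespective of whether $\gamma$ is a successor or a limit.

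For (2), I would build the desired $\kappa$-filtration by recursion. Fix a generating set $\{x_\alpha\mid \alpha<\kappa\}$ of $M$ and a direct system $\mathcal{S}$ witnessing that $M$ is $(\kappa,\Add(X))$-free in the weak sense. Set $M_0=0$, and at limit $\lambda<\kappa$ put $M_\lambda=\bigcup_{\alpha<\lambda}M_\alpha$ (which remains $<\kappa$-generated by regularity). At a successor stage, assuming $M_\alpha$ is $<\kappa$-generated with a generating set $G_\alpha$ of size $<\kappa$, apply the weak-freeness hypothesis to $G_\alpha\cup\{x_\alpha\}$ to obtain some $N_\alpha\in\mathcal{S}$ containing it. The crux is that $N_\alpha$ need not be $<\kappa$-generated, so it must be shrunk: Walker's theorem decomposes $N_\alpha=\bigoplus_{i\in I_\alpha}N_{\alpha,i}$ with each $N_{\alpha,i}$ being $\lambda_X$-generated (applied to $X$ itself as its only $\lambda_X$-generated summand). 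Each element of $G_\alpha\cup\{x_\alpha\}$ has finite support in this decomposition, so there is $J_\alpha\subseteq I_\alpha$ with $|J_\alpha|<\kappa$ such that $M_{\alpha+1}:=\bigoplus_{i\in J_\alpha}N_{\alpha,i}$ contains $G_\alpha\cup\{x_\alpha\}$, and hence $M_\alpha$. As a direct summand of $N_\alpha$, the module $M_{\alpha+1}$ lies in $\Add(X)$; and because $|J_\alpha|\cdot\lambda_X<\kappa$ by regularity of $\kappa$ together with $\lambda_X<\kappa$, it is $<\kappa$-generated, i.e.\ $M_{\alpha+1}\in\Add_\kappa(X)$.

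The three filtration axioms are then immediate: monotonicity and continuity at limits are built into the construction, while $M=\bigcup_{\alpha<\kappa}M_\alpha$ holds since each generator $x_\alpha$ was forced into $M_{\alpha+1}$. The ``in particular'' clause is an instance of (1) applied with $\Add_\kappa(X)$ in place of $\Add(X)$, as the argument of (1) uses nothing about the target class beyond the filtration condition. The main obstacle is precisely the trimming step in (2): it is here that Walker's theorem and the assumption $\lambda_X<\kappa$ intervene to bound both the number and the size of the summands needed to absorb a $<\kappa$-sized subset, a control that would break down without regularity of $\kappa$.
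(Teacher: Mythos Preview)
Your proof is correct and follows essentially the same approach as the paper: the same chain $\{M_{\alpha+1}\mid\alpha<\kappa\}$ for (1), and the same recursive construction for (2) using Walker's theorem to trim each $N_\alpha\in\mathcal S$ down to a $<\kappa$-generated direct summand. Your version is in fact slightly more careful in applying the weak-freeness hypothesis to a generating set $G_\alpha\cup\{x_\alpha\}$ rather than to $M_\alpha\cup\{x_\alpha\}$, which is the right move since only the former is guaranteed to have cardinality $<\kappa$.
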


\begin{proof}
    (1) Just notice that $\{M_{\alpha+1}\mid \alpha < \kappa\}$ is a direct system satisfying the conditions in Definition \ref{d:WeakFree}.

    (2) Let $\{m_\alpha \mid \alpha < \kappa\}$ be a generating system of $M$ and $\mathcal S$ a direct system of sumodules of $M$ witnessing the $(\kappa,\Add(X))$-freeness of $M$. We can construct the modules $M_\alpha$ in the $\kappa$-filtration satisfying $m_\alpha \in M_{\alpha+1}$ recursively on $\alpha$: Set $M_0=0$ and, for $\alpha < \kappa$ limit, $M_\alpha=\bigcup_{\gamma < \alpha}M_\gamma$. For the case $\alpha+1$, take a submodule $N$ of $M$ belonging to $\mathcal S$ and containing $M_\alpha \cup \{x_\alpha\}$. Since $N$ is a direct sum of $\lambda_X$-generated modules belonging to $\Add(X)$ by Walker's lemma, $\lambda_X < \kappa$, and $\kappa$ is regular, we can find a $<\kappa$-generated direct summand $M_{\alpha+1}$ of $N$ containing $M_\alpha \cup \{x_\alpha\}$. This concludes the construction.
\end{proof}

One of the main topics discussed in this paper is when an almost free module relative to $\Add(X)$ is trivial. As the next lemma evidences, stationary sets play an important role when it comes to this issue:

\begin{lemma}\label{WhenMistrivial2}
Let $X$ a module and $\kappa$, an uncountable regular cardinal with $\lambda_X < \kappa$. Let $M$ be a $\kappa$-generated and $(\kappa,\Add(X))$-free module in the weak sense. Then the following assertions are equivalent:
     \begin{enumerate}
         \item  $M\in \mathrm{Add}(X)$.
         
         \item $M$ is a direct sum of $<\kappa$-generated modules.
         
         \item For any $\kappa$-filtration of $M$, $\langle M_\alpha \mid \alpha < \kappa\rangle$, with $M_{\alpha+1} \in \Add(X)$ for every $\alpha < \kappa$, the set $$E=\{\alpha<\kappa\mid M_\alpha\nleq_\oplus M\}$$ is not stationary in $\kappa$.

         \item There is a $\kappa$-filtration of $M$, $\langle M_\alpha \mid \alpha < \kappa\rangle$, with $M_{\alpha+1} \in \Add(X)$ for every $\alpha < \kappa$ such that the set $$E=\{\alpha<\kappa\mid M_\alpha\nleq_\oplus M\}$$ is not stationary in $\kappa$.

         \item For any $\kappa$-filtration of $M$, $\langle M_\alpha \mid \alpha < \kappa\rangle$, with $M_{\alpha+1} \in \Add(X)$ for every $\alpha < \kappa$, the set $$E'=\{\alpha<\kappa\mid \{\beta > \alpha \mid M_\alpha \nleq_\oplus M_\beta\} \textrm{ is stationary in }\kappa\}$$ is not stationary in $\kappa$.

         \item There is a $\kappa$-filtration of $M$, $\langle M_\alpha \mid \alpha < \kappa\rangle$, with $M_{\alpha+1} \in \Add(X)$ for every $\alpha < \kappa$ such that the set $$E'=\{\alpha<\kappa\mid \{\beta > \alpha \mid M_\alpha \nleq_\oplus M_\beta\} \textrm{ is stationary in }\kappa\}$$ is not stationary in $\kappa$.
     \end{enumerate}
\end{lemma}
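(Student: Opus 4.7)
The plan is to establish the two cycles of implications $(1)\Rightarrow(2)\Rightarrow(3)\Rightarrow(4)\Rightarrow(1)$ and $(3)\Rightarrow(5)\Rightarrow(6)\Rightarrow(1)$. The technical core lies in $(4)\Rightarrow(1)$; the analogous implication $(6)\Rightarrow(1)$ then requires only a diagonal-intersection tweak. All other implications are essentially formal.

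First I would dispose of the routine implications. $(1)\Rightarrow(2)$ follows at once from Lemma~\ref{l:WalkerForPresented} together with $\lambda_X<\kappa$. For $(2)\Rightarrow(3)$, writing $M=\bigoplus_{i\in I}N_i$ with each $N_i$ being $<\kappa$-generated, a standard closing-off argument produces a club $C\subseteq\kappa$ and a monotone family $\langle I_\alpha\mid \alpha\in C\rangle$ of subsets of $I$ with $|I_\alpha|<\kappa$ such that $M_\alpha=\bigoplus_{i\in I_\alpha}N_i$ for every $\alpha\in C$; in particular each such $M_\alpha$ is a direct summand of $M$, so $E\cap C=\emptyset$ and $E$ is non-stationary. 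The implications $(3)\Rightarrow(4)$ and $(5)\Rightarrow(6)$ are then obtained by feeding the $\kappa$-filtration guaranteed by Proposition~\ref{p:WeakFreeAndFiltrations}(2) into the quantifiers of (3) and (5). For $(3)\Rightarrow(5)$ I would invoke the modular-law observation that whenever $M_\alpha\leq M_\beta$ are both direct summands of $M$, say $M=M_\alpha\oplus P$, one has $M_\beta=M_\alpha\oplus(M_\beta\cap P)$. It follows that $E'\subseteq E$, and non-stationarity of $E$ forces that of $E'$.

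The heart of the argument is $(4)\Rightarrow(1)$. Given a $\kappa$-filtration witnessing (4), I would fix a club $C\subseteq\kappa\setminus E$ and enumerate it as $\langle c_\gamma\mid \gamma<\kappa\rangle$. By closedness of $C$ the family $\tilde M_\gamma:=M_{c_\gamma}$ is itself a $\kappa$-filtration of $M$ (shifted so that $\tilde M_0=0$), and by the modular-law observation each $\tilde M_\gamma\leq_\oplus \tilde M_{\gamma+1}$. Choosing complements $\tilde M_{\gamma+1}=\tilde M_\gamma\oplus \tilde N_\gamma$, Fact~\ref{fact: when M is a direct sum} then yields $M=\bigoplus_{\gamma<\kappa}\tilde N_\gamma$. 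To conclude $M\in\Add(X)$ I must show each $\tilde N_\gamma\in \Add(X)$. For this I would invoke weak freeness: the $<\kappa$-generated module $\tilde M_{\gamma+1}$ is contained in some $L_\gamma\in\Add(X)$ from the direct system of Definition~\ref{d:WeakFree}; the decomposition of $M$ just produced provides a retraction $\pi\colon M\twoheadrightarrow \tilde M_{\gamma+1}$ whose restriction to $L_\gamma$ exhibits $\tilde M_{\gamma+1}$ as a direct summand of $L_\gamma$, so $\tilde M_{\gamma+1}\in \Add(X)$ and hence $\tilde N_\gamma\in \Add(X)$.

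The implication $(6)\Rightarrow(1)$ follows the same template, but a club $C\subseteq \kappa\setminus E'$ no longer guarantees $M_\alpha\leq_\oplus M_\beta$ for $\alpha<\beta$ in $C$; it only guarantees that, for each $\alpha\in C$, the set $D_\alpha:=\{\beta>\alpha\mid M_\alpha\leq_\oplus M_\beta\}$ contains a club. Taking $D_\alpha:=\kappa$ for $\alpha\notin C$ and forming the diagonal intersection $D:=C\cap \triangle_{\alpha<\kappa}D_\alpha$ produces a club on which $M_\alpha\leq_\oplus M_\beta$ for any $\alpha<\beta$ in $D$. From this point the argument duplicates $(4)\Rightarrow(1)$ verbatim, replacing $C$ by $D$. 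I expect the main obstacle throughout to be that the successor terms of the refined filtration typically correspond to \emph{limit} stages in the original indexing and therefore do not \emph{a priori} lie in $\Add(X)$; it is the retraction-plus-weak-freeness bootstrap in the last step of $(4)\Rightarrow(1)$ that genuinely resolves this, and this appears to be the only truly non-formal ingredient.
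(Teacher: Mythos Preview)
Your proposal is correct and follows essentially the same architecture as the paper's proof: both run the cycle $(1)\Rightarrow(2)\Rightarrow(3)\Rightarrow(4)\Rightarrow(1)$ via Walker's theorem, a two-filtration club comparison, and a club re-indexing combined with Fact~\ref{fact: when M is a direct sum}, and then handle $(5)$--$(6)$ through the inclusion $E'\subseteq E$ and a diagonal-intersection refinement (for the latter the paper simply defers to \cite[Proposition~IV.1.7]{EklofMekler}). One minor simplification worth noting: the weak-freeness bootstrap you flag as the only non-formal ingredient in $(4)\Rightarrow(1)$ is actually avoidable, since $\tilde M_{\gamma+1}=M_{c_{\gamma+1}}\leq_\oplus M$ together with $M_{c_{\gamma+1}}\leq M_{c_{\gamma+1}+1}\in\Add(X)$ and the same modular-law step already gives $\tilde M_{\gamma+1}\in\Add(X)$ directly from the filtration hypothesis.
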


\begin{proof}
    (1) $\Rightarrow$ (2). If $M \in \Add(X)$, then $M$ is a direct sum of $\lambda_X$-generated modules by Walker's lemma \cite[Theorem~26.1]{AndersonFuller}.

    \smallskip

    (2) $\Rightarrow$ (3). Let $\langle M_\alpha \mid \alpha < \kappa\rangle$ be a $\kappa$-filtration of $M$ with $M_\alpha \in \Add(X)$ for every $\alpha < \kappa$. Write $M=\bigoplus_{\alpha < \kappa}X_\alpha$ for suitable $X_\alpha \in \Add_\kappa(X)$. Setting $N_\alpha = \bigoplus_{\gamma < \alpha}X_\gamma$ we get a $\kappa$-filtration of $M$ with $N_\alpha \leq_\oplus M$ for each $\alpha < \kappa$. By \cite[Lemma IV.1.4]{EklofMekler}, there is club $C$ such that $M_\alpha = N_\alpha$ for each $\alpha < \kappa$. Then $C \cap E=\emptyset$ and $E$ is not stationary.

     \smallskip

    (3) $\Rightarrow$ (4). Trivial.

    \smallskip

    (4) $\Rightarrow$ (1). Let $C\s \kappa$ be a club disjoint from $E$. Let $C(\cdot)\colon \kappa\rightarrow C$ be an order-preserving continious bijection and set $N_\alpha:=M_{C(\alpha)}$. Clearly, $\langle N_\alpha\mid \alpha<\kappa\rangle$ is another $\kappa$-filtration of $M$  consisting of members of $\Add(X)$ which in addition satisfies that $N_\alpha=M_{C(\alpha)}\leq_{\oplus} M_{C(\alpha)+1}=N_{\alpha+1}$. Hence,  $N_{\alpha+1}=N_\alpha\oplus N_{\alpha+1}'$ for all $\alpha<\kappa$. By Fact~\ref{fact: when M is a direct sum}, $\textstyle M=\bigoplus_{\alpha<\kappa} N'_{\alpha+1}$ which means that $M\in \mathrm{Add}(X)$.%\man{Order preserving implica continua?}\ale{No. Lo acabo de corregir. Gracias.}

    (4) $\Rightarrow$ (5). Follows from the fact that $E \subseteq E'$.

    (5) $\Rightarrow$ (6). Trivial.

    (6) $\Rightarrow$ (1). The same proof of \cite[Proposition IV.1.7]{EklofMekler} applies.
\end{proof}

Let us consider $\kappa$-freeness of modules relative to a class \cite[Definition 2.2]{ProductsOfFlats}:

\begin{definition}[$(\kappa,\mathcal X)$-freeness]\label{d:KappaFree}
%Let $\kappa$ be an uncountable cardinal and $M$, $X$, modules.
Let $\kappa$ be an uncountable regular cardinal and $\mathcal X$, a class of modules. A module $M$ is called \emph{$(\kappa,\mathcal X)$-free} if it has a $(\kappa,\mathcal X)$-dense system of submodules, i.e., a direct system $\mathcal{S}\s \mathcal{P}(M)$ consisting of submodules of $M$ such that:
%\begin{enumerate}
%\item If $\kappa$ is regular, the module $M$ is called $(\kappa,\Add(X))$-free if there exists a subset $\mathcal S$ of submodules of $M$ such that:
\begin{enumerate}
\item Every module in $\mathcal S$ belongs to the class $\mathcal X$.

\item $\mathcal S$ is closed under unions of well-ordered chains of length ${<} \kappa$.

\item Every subset of $M$ of cardinality ${<}\kappa$ is contained in an element of $\mathcal S$.
\end{enumerate}
A $(\kappa,\mathcal X)$-free module $M$ is called \emph{trivial} if $M\in \mathcal X.$
%\item If $\kappa$ is singular, $M$ is $(\kappa,\Add(X))$-free if it is $(\lambda,\Add(X))$-free for every regular cardinal $\lambda < \kappa$.
%\end{enumerate}
\end{definition}

Clearly, every $(\kappa,\mathcal X)$-free module is $(\kappa,\mathcal X)$-free in the weak sense. If $\mathcal X$ is the class of all $<\kappa$-generated free modules, then the $(\kappa,\mathcal X)$-free modules are the $\kappa$-free modules by Eklof and Mekler \cite[Definition IV.1.1]{EklofMekler}. If $\mathcal X$ is the class of all countably generated pure-projective modules, then the $(\aleph_1,\mathcal X)$-free modules are the Mittag-Leffler modules \cite[Corollary 2.7]{HerberaTrlifaj}. If $\mathcal X$ is the class of all countably generated projective modules, then the $(\aleph_1,\mathcal X)$-free modules are the flat Mittag-Leffler modules \cite[Corollary 2.10]{HerberaTrlifaj}.

\begin{remark}
It is possible to tweak the above definition to encompass the case where $\kappa$ is a singular cardinal (such as $\aleph_\omega$). In that case one stipulates  $M$ to be {$(\kappa,\Add(X))$-free} if it is $(\lambda,\Add(X))$-free for every regular cardinal $\lambda < \kappa$. By Shelah's Singular Compactness Theorem \cite[Theorem IV.3.7]{EklofMekler}, every $(\kappa,\Add(X))$-free module is trivial (i.e., it belongs to $\Add(X)$) when $\kappa$ is a singular cardinal.
\end{remark}

%By Shelah's Singular Compactness Theorem \cite[Theorem IV.3.7]{EklofMekler}, every $(\kappa,\mathcal X)$-free module is trivial when $\kappa$ is a singular cardinal (\textcolor{red}{comprobar!}).

Similarly to the classical notion of $\kappa$-freeness of \cite[IV.1.1]{EklofMekler}, $(\kappa,\Add_\kappa(X))$-freeness can be as well characterized by the existence of certain $\kappa$-filtrations: % consisting of members of $\Add(X)$.
\begin{proposition}\label{prop: filtration and freeness}
Let $\kappa$ be uncountable regular  and $M$ and $X$ be modules.

\begin{enumerate}
    \item If $M$ has a filtration, $\langle M_\alpha \mid \alpha < \kappa \rangle$, with $M_\alpha \in \Add(X)$ for each $\alpha < \kappa$, then $M$ is $(\kappa,\Add(X))$-free.

    \item If $\kappa > \lambda_X$, $M$ is $\kappa$-generated and $(\kappa,\Add_\kappa(X))$-free, then $M$ has a filtration, $\langle M_\alpha \mid \alpha < \kappa \rangle$, with $M_\alpha \in \Add_\kappa(X)$ for each $\alpha < \kappa$.
\end{enumerate}

%Thus, every ${\leq} \kappa$-generated $(\kappa,\Add(X))$-free module is weakly $(\kappa,\Add(X))$-free.
\end{proposition}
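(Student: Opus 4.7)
The plan is to unpack Definition \ref{d:KappaFree} and exploit the correspondence between $\kappa$-filtrations and $(\kappa,\Add(X))$-dense systems, mirroring the strategy already used in Proposition \ref{p:WeakFreeAndFiltrations}. For part (1), I would take as candidate dense system the very family $\mathcal S=\{M_\alpha\mid \alpha<\kappa\}$ arising from the given filtration, and verify the three clauses of Definition \ref{d:KappaFree}. Clause (1) is the hypothesis $M_\alpha\in\Add(X)$. For clause (2), any well-ordered chain in $\mathcal S$ is indexed by some ordinal $\gamma<\kappa$; regularity of $\kappa$ gives $\beta:=\sup_{i<\gamma}\alpha_i<\kappa$ and continuity of the filtration forces $\bigcup_{i<\gamma}M_{\alpha_i}=M_\beta\in\mathcal S$. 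Clause (3) is analogous: a subset of $M$ of size $<\kappa$ is absorbed by some $M_\beta$, again via regularity of $\kappa$ together with $M=\bigcup_{\alpha<\kappa}M_\alpha$.

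For part (2), I would fix a generating set $\{m_\alpha\mid \alpha<\kappa\}$ of $M$ and a $(\kappa,\Add_\kappa(X))$-dense system $\mathcal S$, and then recursively build $M_\alpha\in\mathcal S$ in such a way that $m_\beta\in M_\alpha$ for every $\beta<\alpha$. At a successor step, density of $\mathcal S$ supplies some $M_{\alpha+1}\in\mathcal S$ containing $M_\alpha\cup\{m_\alpha\}$; this set has size $<\kappa$ thanks to the assumption $\lambda_X<\kappa$, which makes elements of $\Add_\kappa(X)$ genuinely $<\kappa$-generated. At a limit stage I set $M_\alpha=\bigcup_{\beta<\alpha}M_\beta$, and here the closure-under-chains clause of Definition \ref{d:KappaFree} is precisely what guarantees $M_\alpha\in\mathcal S\s\Add_\kappa(X)$. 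Continuity of the resulting filtration and the exhaustion $M=\bigcup_{\alpha<\kappa}M_\alpha$ come for free, since $m_\alpha\in M_{\alpha+1}$ by construction.

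The principal subtlety I anticipate is exactly the limit stage of the recursion in part (2), where one must invoke the closure-under-chains axiom of Definition \ref{d:KappaFree}. This is precisely the ingredient that is absent in Definition \ref{d:WeakFree}, and it explains why the weaker Proposition \ref{p:WeakFreeAndFiltrations}(2) only delivers $M_{\alpha+1}\in\Add_\kappa(X)$ at successor stages, rather than the stronger conclusion $M_\alpha\in\Add_\kappa(X)$ for every $\alpha<\kappa$ that is available in the present setting.
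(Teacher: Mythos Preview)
Your proposal is correct and follows essentially the same approach as the paper: for (1) the filtration itself is taken as the $(\kappa,\Add(X))$-dense system, and for (2) one recursively builds the filtration from the dense system $\mathcal S$, using density at successor stages and the closure-under-chains clause at limit stages---exactly the point you isolate as the key difference from Proposition~\ref{p:WeakFreeAndFiltrations}. One tiny imprecision: at the successor step the set $M_\alpha\cup\{m_\alpha\}$ may itself have cardinality $\geq\kappa$ (if $|R|\geq\kappa$), so you should apply density to a $<\kappa$-sized generating set of $M_\alpha$ together with $m_\alpha$; your invocation of $\lambda_X<\kappa$ shows you already have the right idea.
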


\begin{proof}
(1) Just notice that $\{M_\alpha \mid \alpha < \kappa\}$ is a $\kappa$-dense system consisting of modules belonging to $\Add(X)$.

(2) If $\mathcal S \subseteq \mathcal P(M) $ is a $(\kappa,\mathcal S)$-dense system of $M$, then we can reason as in Proposition \ref{p:WeakFreeAndFiltrations} to obtain a $\kappa$-filtration of $M$, $\langle M_\alpha \mid \alpha < \kappa \rangle$, with $M_{\alpha+1} \in \mathcal S$. But, since $\mathcal S$ is closed under well-ordered unions of length smaller than $\kappa$, $M_\alpha \in \Add_\kappa(X)$ for every limit ordinal $\alpha < \kappa$ as well.
\end{proof}

%\subsection{$(\kappa,\Add(X))$-freeness and stationary reflection}

%Recall that during this section $X$ denotes a  ${<}\kappa$-generated module.

%In the next few lemmas we will argue that if $\lambda_X^+<\kappa$ (e.g., if $\kappa$ is a limit cardinal) then the existence of non-trivial $(\kappa, \Add(X))$-free groups yield non-reflecting stationary subsets of $\kappa$. Putted in set-theoretic terms, this demonstrates  that non-trivial $(\kappa, \Add(X))$-free groups are genuine non-compact objects.

%\begin{proof}
 %   We just need to show that a weakly compact cardinal $\kappa$ satisfies $(\star)$ above. This is a rather standard observation but we include it for the benefit of our readers. 
%
 %   Recall that $\kappa$ is weakly compact if and only if it is $\Pi_1^1$-indescribable.  Consider $$\text{$\{\alpha<\kappa\mid \text{$\alpha$ is regular}\}$ and $\{\alpha<\kappa\mid \text{$S\cap \alpha$ is stationary}\}$}.$$
  %  Both sets are easily seen to belong to the $\Pi^1_1$-indescribable filter, hence so does their intersection.  Since the $\Pi^1_1$-indescribable filter is  normal (see \cite[Proposition~6.11]{Kan}) all of its members are stationary in $\kappa$. This clearly yields $(\star)$ above. 
%\end{proof}

Another interesting instance of almost freeness is the following due to Eklof and Mekler \cite[Definition IV.1.8]{EklofMekler}:

\begin{definition}[Strongly $(\kappa,\mathcal X)$-freeness]\label{d:StronglyFree}
Let $\kappa$ be an uncountable regular cardinal and $M$, a module. We say that $M$ is \textit{strongly $(\kappa,\mathcal X)$-free} if there is a set $\mathcal S \subseteq \mathcal X$ of submodules of $M$ belonging to $\mathcal X$ and containing $0$ such that for any $S \in \mathcal S$ and $Y\subseteq M$ with $|Y|<\kappa$, there exists $S' \in \mathcal S$ containing $Y \cup S$ and such that $S$ is a direct summand of $S'$.

A strongly $(\kappa,\mathcal X)$-free module $M$ is called \emph{trivial} if $M\in \mathcal X.$
\end{definition}

\begin{remark}\label{l:DirectSummand}
    Suppose that $M$ is strongly $(\kappa,\mathcal X)$-free and that $\mathcal S$ witnesses this. If $S, S' \in \mathcal S$ and $S \leq S'$ then  $S\leq_{\oplus}S'$:  By the definition of strongly free we can find $S'' \in \mathcal S$ with $S' \leq S''$ and $S$ being a direct summand of $S''$. In particular, $S$ is a direct summand of $S'$.
\end{remark}

As in the classical context of \emph{strong $\kappa$-freeness} \cite[IV.1]{EklofMekler}, a strongly $(\kappa,\Add(X))$-free module may not be $(\kappa,\Add(X))$-free (see \cite[Theorem 8]{Trlifaj} for the case $X=R$). Next we stablish the characterization of $\kappa$-generated strongly $(\kappa,\Add(X))$-free modules via filtrations. In particular, they are weakly $(\kappa,\Add(X))$-free. 

\begin{lemma}\label{l:StrongFree}
%Let $\kappa$ be uncountable regular and $M$ and $X$, modules. 
Let $\kappa$ be an uncountable cardinal and $X$ a module with $\lambda_X<\kappa$. The following assertions are equivalent for the $\kappa$-generated module $M$:
\begin{enumerate}
    \item $M$ is strongly $(\kappa,\Add(X))$-free;

    \item $M$ is strongly $(\kappa,\Add_\kappa(X))$-free; 
    
    \item $M$ admits a $\kappa$-filtration $\langle M_\alpha \mid \alpha < \kappa\rangle$ such that for $\alpha<\kappa$, 
      $M_{\alpha +1}\in \Add_\kappa(X)$  and $M_{\alpha+1}\leq_{\oplus} M_\beta$ for all $\alpha<\beta<\kappa.$
    %and being a direct summand of $M_\beta$ for every $\alpha < \beta < \kappa$.
\end{enumerate}
Thus, strongly $(\kappa,\Add(X))$-free modules are  weakly $(\kappa,\Add(X))$-free modules.
\end{lemma}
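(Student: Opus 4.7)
The plan is to establish the cycle $(2)\Rightarrow(1)$, $(3)\Rightarrow(2)$, $(1)\Rightarrow(2)$ and $(2)\Rightarrow(3)$, after which the final assertion will follow by applying Proposition~\ref{p:WeakFreeAndFiltrations}(1) to the filtration produced in~(3). The easy implications come first. $(2)\Rightarrow(1)$ is immediate from $\Add_\kappa(X)\subseteq\Add(X)$. For $(3)\Rightarrow(2)$ I would take $\mathcal{S}':=\{0\}\cup\{M_{\alpha+1}\mid\alpha<\kappa\}$: given $S=M_{\alpha+1}\in\mathcal{S}'$ and $Y\subseteq M$ with $|Y|<\kappa$, the regularity of $\kappa$ and the continuity of the filtration yield a successor ordinal $\beta+1>\alpha+1$ with $Y\subseteq M_{\beta+1}$, and condition~(3) gives $M_{\alpha+1}\leq_\oplus M_{\beta+1}$, so $S':=M_{\beta+1}$ is the desired extension.

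For $(1)\Rightarrow(2)$, starting from a witness $\mathcal{S}$ of~(1), I would set
$$\mathcal{S}':=\{S\leq M\mid S\in\Add_\kappa(X)\text{ and }S\leq_\oplus T\text{ for some }T\in\mathcal{S}\}.$$
Given $S\in\mathcal{S}'$ with $S\leq_\oplus T_0\in\mathcal{S}$ and $Y\subseteq M$ with $|Y|<\kappa$, strong freeness of $\mathcal{S}$ yields $T\in\mathcal{S}$ containing $T_0\cup Y$ with $T_0\leq_\oplus T$; hence $S\leq_\oplus T$ and $T=S\oplus K$ for some $K\in\Add(X)$. Walker's Lemma writes $K=\bigoplus_{i\in I}K_i$ with $\lambda_X$-generated pieces, and a subset $J\subseteq I$ of cardinality $<\kappa$ covers the $K$-components of every $y\in Y$, so $S':=S\oplus\bigoplus_{i\in J}K_i$ is the required extension.

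The heart of the proof is $(2)\Rightarrow(3)$. Fixing a witness $\mathcal{S}'$ and an enumeration $\{m_\alpha\mid\alpha<\kappa\}$ of a generating set of $M$, I would recursively build $\langle M_\alpha\rangle_{\alpha<\kappa}$ maintaining the invariant that every $M_{\alpha+1}$ belongs to $\mathcal{S}'$, contains $m_\alpha$, and admits each previously constructed $M_{\gamma+1}$ as a direct summand. Setting $M_0:=0$ and $M_\alpha:=\bigcup_{\beta<\alpha}M_\beta$ at limits, a successor step $\alpha+1$ with $\alpha$ a successor is handled by a single application of strong freeness to $M_\alpha\in\mathcal{S}'$ and $\{m_\alpha\}$. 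At a successor step $\alpha+1$ with $\alpha$ a limit, I would iterate strong freeness along a cofinal increasing sequence of successor ordinals $(\gamma_\xi+1)_{\xi<\cf(\alpha)}\subseteq\alpha$, producing a chain $\langle T_\xi\rangle_{\xi<\cf(\alpha)}\subseteq\mathcal{S}'$ with $T_\xi\leq_\oplus T_{\xi+1}$ and $M_{\gamma_\xi+1}\leq_\oplus T_\xi$; then $M_{\alpha+1}$ is (the top of) this chain and contains $M_\alpha\cup\{m_\alpha\}$. The required relation $M_{\alpha+1}\leq_\oplus M_\beta$ for $\alpha<\beta<\kappa$ is verified by transfinite induction on $\beta$, combining the transitivity of $\leq_\oplus$ with the elementary fact that if $A\leq B\leq C$ and $A\leq_\oplus C$, then $A\leq_\oplus B$.

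The main obstacle will be negotiating the internal limit stages of this iteration when $\cf(\alpha)$ is uncountable, since $\mathcal{S}'$ need not be closed under unions of chains. I plan to overcome this by first enlarging $\mathcal{S}'$ to a family closed under ${<}\kappa$-indexed chains of split inclusions (the enlargement still satisfies the strong-freeness axiom through a Walker-type direct-sum decomposition of the increments, together with Fact~\ref{fact: when M is a direct sum}) and then running the construction inside the enlarged family. The final claim of the lemma is then immediate: Proposition~\ref{p:WeakFreeAndFiltrations}(1) applied to the $\kappa$-filtration from~(3) produces the required directed system, witnessing that $M$ is weakly $(\kappa,\Add(X))$-free.
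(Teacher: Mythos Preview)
Your handling of $(1)\Leftrightarrow(2)$ and $(3)\Rightarrow(2)$ matches the paper's. The divergence is in $(2)\Rightarrow(3)$, where you work much harder than necessary and the hard step is not actually justified.

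The paper simply builds $M_{\alpha+1}\in\mathcal S$ at every stage by applying the strong-freeness axiom with $S=0$ and $Y$ a ${<}\kappa$-sized generating set of $M_\alpha$ together with $m_\alpha$; this is legitimate even when $\alpha$ is a limit, since $M_\alpha$ is ${<}\kappa$-generated by regularity. No direct-summand relations are enforced during the recursion. Once the filtration is built with every $M_{\alpha+1}\in\mathcal S$, Remark~\ref{l:DirectSummand} gives $M_{\alpha+1}\leq_\oplus M_{\beta+1}$ for all $\alpha<\beta$ in one stroke (any two comparable members of a strong-freeness witness are automatically split-related), and then your own ``elementary fact'' yields $M_{\alpha+1}\leq_\oplus M_\beta$ for limit $\beta$ via the sandwich $M_{\alpha+1}\leq M_\beta\leq M_{\beta+1}$. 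That is the whole proof.

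Your route tries to enforce the summand relations during the recursion, which forces an inner transfinite iteration at limit $\alpha$ and then the proposed ``enlargement'' of $\mathcal S'$ to absorb unions of split chains. That enlargement is where the argument breaks down: to verify the strong-freeness axiom for the enlarged family you must show that a union $T=\bigcup_{\xi}T_\xi$ of a split chain from $\mathcal S'$ splits in some later $T'\in\mathcal S'$, but Remark~\ref{l:DirectSummand} only gives $T_\xi\leq_\oplus T'$ for each $\xi$, i.e.\ that $T$ is a \emph{local} direct summand of $T'$. Promoting this to a genuine direct summand is precisely the perfect-decomposition property of Definition~\ref{PerfectDecomposition}, which is not available in general. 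Fact~\ref{fact: when M is a direct sum} and Walker do decompose $T$ as $\bigoplus_\xi C_\xi$, but they do not produce a complement for $T$ inside $T'$.

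The fix is to drop the inner iteration and the enlargement entirely: you already stated the observation (Remark~\ref{l:DirectSummand}) that makes the limit case trivial---just apply it at the outset rather than at the end.
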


\begin{proof}
(1) $\Rightarrow$ (2). Suppose that $M$ is strongly $(\kappa,\Add(X))$-free and take the set $\mathcal S$ of submodules of $M$ given by Definition \ref{d:StronglyFree}. Let $\mathcal S'$ be the set consisting of all $<\kappa$-generated submodules of $M$ which are a direct summand of some member of $\mathcal S$. Then $\mathcal S' \subseteq \Add_\kappa(X)$ and given any $Y \subseteq M$ with $|Y| < \kappa$ and $S' \in \mathcal S$, there exists $S \in \mathcal S$ such that $S' \leq_\oplus S$. Using that $M$ is strongly $(\kappa,\Add(X))$-free, we can find $T \in \mathcal S$ with $Y \cup S \leq T$ and $S \leq_\oplus T$. Since $T$ is a direct sum of $\lambda_X$-generated modules by Walker's lemma, there exists a $<\kappa$-generated direct summand $T'$ of $T$ containing $Y \cup S'$. Then $T' \in \mathcal S'$ and, as $S' \leq_\oplus T$, $S'$ is a direct summand of $T'$ as well. This means that $\mathcal S'$ satisfies the conditions of Definition \ref{d:StronglyFree}, and that $M$ is strongly $(\kappa,\Add_\kappa(X))$-free.

(2) $\Rightarrow$ (3). Suppose that $M$ is strongly $(\kappa,\Add_\kappa(X))$-free and take $\mathcal S$ the set of submodules of $M$ given in Definition \ref{d:StronglyFree}. Repeat the argument provided in the proof of $(1)\Rightarrow (2)$ in Proposition~\ref{prop: filtration and freeness}. This yields a $\kappa$-filtration $\langle M_\alpha\mid \alpha<\kappa\rangle$ where $M_{\alpha+1}\in \mathcal{S}$ for all $\alpha<\kappa$. %Let $\{m_\alpha \mid \alpha < \kappa\}$ be a generating system of $M$. We construct the filtration $(M_\alpha \mid \alpha < \kappa)$ recursively and satisfying $M_{\alpha+1} \in \mathcal S$ contains $m_\alpha$ for every $\alpha<\kappa$. Set $M_0=0$. If $M_\alpha$ has been constructed for some $\alpha < \kappa$, take $M_{\alpha+1} \in \mathcal S$ such that $M_\alpha+m_{\alpha}R \leq M_{\alpha+1}$. 
By Remark~\ref{l:DirectSummand},  $M_{\alpha+1}\leq_{\oplus}M_{\beta+1}$ for $\alpha < \beta < \kappa$. In particular, $M_{\alpha+1}$ is a direct summmand of $M_\beta$ as well.
\end{proof}

In order to see if a strongly $(\kappa,\Add(X))$-free module is trivial, we can look at another subset of $\kappa$ different from the one in Lemma \ref{WhenMistrivial2}

\begin{proposition}\label{p:Strongly free are trivial}
    Let $\kappa$ be an uncountable regular cardinal and $X$ a module with $\lambda_X < \kappa$. The following are equivalent for a strongly $(\kappa,\Add(X))$-free and $\kappa$-generated module $M$:
    \begin{enumerate}
        \item $M$ is trivial;

        \item there exists a $\kappa$-filtration of $M$, $\langle M_\alpha \mid \alpha < \kappa \rangle$, with $M_{\alpha+1} \in \Add_\kappa(X)$ and $M_{\alpha+1} \leq_\oplus M_\beta$ for each $\alpha < \beta < \kappa$, such that the set
        \begin{displaymath}
            E''=\{\alpha < \kappa \mid M_\alpha \nleq_\oplus M_{\alpha+1}\}
        \end{displaymath}
        is not stationary in $\kappa$;

        \item for every $\kappa$-filtration of $M$, $\langle M_\alpha \mid \alpha < \kappa \rangle$, with $M_{\alpha+1} \in \Add_\kappa(X)$ and $M_{\alpha+1} \leq_\oplus M_\beta$ for each $\alpha < \beta < \kappa$, that the set
        \begin{displaymath}
            E''=\{\alpha < \kappa \mid M_\alpha \nleq_\oplus M_{\alpha+1}\}
        \end{displaymath}
        is not stationary in $\kappa$.
    \end{enumerate}
\end{proposition}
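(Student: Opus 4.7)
My plan is to prove the cycle $(3)\Rightarrow (2)\Rightarrow(1)\Rightarrow (3)$. The implication $(3)\Rightarrow (2)$ is essentially free: since $M$ is strongly $(\kappa,\Add(X))$-free and $\kappa$-generated, Lemma~\ref{l:StrongFree} provides a $\kappa$-filtration of the kind demanded by (3), so the universal statement in (3) immediately witnesses the existential statement in (2).

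For $(2)\Rightarrow (1)$, I would mimic the implication $(4)\Rightarrow(1)$ of Lemma~\ref{WhenMistrivial2}. Starting from a filtration $\langle M_\alpha\mid \alpha<\kappa\rangle$ as in (2) and a club $C\s\kappa$ disjoint from $E''$, let $c\colon \kappa\to C$ be the order-preserving continuous bijection and set $N_\alpha := M_{c(\alpha)}$. The continuity of $c$ together with the closedness of $C$ ensures $\langle N_\alpha \mid \alpha<\kappa\rangle$ is a $\kappa$-filtration of $M$. For each $\alpha < \kappa$, $c(\alpha)\notin E''$ gives $M_{c(\alpha)} \leq_\oplus M_{c(\alpha)+1}$, and the hypothesis that $M_{c(\alpha)+1}\leq_\oplus M_\beta$ for all $\beta>c(\alpha)$ implies $M_{c(\alpha)+1}\leq_\oplus M_{c(\alpha+1)}$, whence $N_\alpha\leq_\oplus N_{\alpha+1}$. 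Writing $N_{\alpha+1}=N_\alpha\oplus K_\alpha$, Fact~\ref{fact: when M is a direct sum} yields $M=\bigoplus_{\alpha<\kappa}K_\alpha$. Each $K_\alpha$ is a direct summand of $M_{c(\alpha+1)}\in\Add_\kappa(X)$ and therefore belongs to $\Add(X)$, so $M\in\Add(X)$.

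For $(1)\Rightarrow (3)$, I would combine Lemma~\ref{WhenMistrivial2} with an application of the modular law. Suppose $M\in\Add(X)$ and fix any $\kappa$-filtration $\langle M_\alpha \mid \alpha<\kappa\rangle$ as in the statement of (3). By the implication $(1)\Rightarrow(3)$ of Lemma~\ref{WhenMistrivial2}, the set $E=\{\alpha<\kappa\mid M_\alpha\nleq_\oplus M\}$ is non-stationary in $\kappa$. Now fix $\alpha\notin E$ and write $M=M_\alpha\oplus A$. Since $M_\alpha\leq M_{\alpha+1}\leq M$, the modular law yields $M_{\alpha+1}=M_\alpha\oplus (M_{\alpha+1}\cap A)$, so $M_\alpha\leq_\oplus M_{\alpha+1}$, i.e. $\alpha\notin E''$. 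Therefore $E''\s E$, and $E''$ is non-stationary as well.

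The only conceptually subtle step is the club-reindexing in $(2)\Rightarrow (1)$, where one must use the additional direct-summand hypothesis $M_{\alpha+1}\leq_\oplus M_\beta$ (guaranteed by the strong freeness condition, cf. Remark~\ref{l:DirectSummand}) to pass direct summand-hood from $M_{c(\alpha)+1}$ to $N_{\alpha+1}=M_{c(\alpha+1)}$; without this one would only know $M_{c(\alpha)}$ sits as a direct summand of $M_{c(\alpha)+1}$ and not of the later term $N_{\alpha+1}$, which would spoil the application of Fact~\ref{fact: when M is a direct sum}. All remaining steps are routine adaptations of arguments already deployed in Lemma~\ref{WhenMistrivial2}.
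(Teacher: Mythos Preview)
Your cycle $(3)\Rightarrow(2)\Rightarrow(1)\Rightarrow(3)$ is correct in spirit but differs from the paper's argument, which is a one-liner: for any filtration satisfying $M_{\alpha+1}\leq_\oplus M_\beta$ for all $\beta>\alpha$, the set $E''$ coincides with the set $E'$ of Lemma~\ref{WhenMistrivial2}(5),(6). Indeed, if $M_\alpha\leq_\oplus M_{\alpha+1}$ then composing with $M_{\alpha+1}\leq_\oplus M_\beta$ gives $M_\alpha\leq_\oplus M_\beta$ for all $\beta>\alpha$, so the set $\{\beta>\alpha\mid M_\alpha\nleq_\oplus M_\beta\}$ is empty; conversely, if $M_\alpha\nleq_\oplus M_{\alpha+1}$ then the modular law (as you use in your $(1)\Rightarrow(3)$) shows $M_\alpha\nleq_\oplus M_\beta$ for every $\beta>\alpha$, so that set is all of $(\alpha,\kappa)$. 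With $E''=E'$ established, conditions (2) and (3) here are literally conditions (6) and (5) of Lemma~\ref{WhenMistrivial2}, and the proposition follows. Your approach instead reproves the relevant implications of that lemma by hand, which is fine but longer.

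There is one small slip in your $(2)\Rightarrow(1)$: you claim $M_{c(\alpha+1)}\in\Add_\kappa(X)$, but $c(\alpha+1)$ may be a limit ordinal (a club in $\kappa$ cannot avoid limit ordinals), and the hypothesis only places the \emph{successor-indexed} terms $M_{\gamma+1}$ in $\Add_\kappa(X)$. The repair is immediate with the tools at hand: each $K_\alpha$, being a summand of the ${<}\kappa$-generated module $N_{\alpha+1}$, is itself ${<}\kappa$-generated, so $M=\bigoplus_{\alpha<\kappa}K_\alpha$ is a direct sum of ${<}\kappa$-generated modules; since $M$ is $(\kappa,\Add(X))$-free in the weak sense by Lemma~\ref{l:StrongFree}, Lemma~\ref{WhenMistrivial2}(2)$\Rightarrow$(1) gives $M\in\Add(X)$.
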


\begin{proof}
    Simply notice that every $\kappa$-filtration $\langle M_\alpha \mid \alpha < \kappa\rangle$ of $M$ as in Lemma \ref{l:StrongFree} we have the equality
    \begin{displaymath}
        E''=\{\alpha < \kappa \mid \{\beta > \alpha \mid M_\alpha \nleq_\oplus M_\beta \}\textrm{ is stationary in }\kappa\}
    \end{displaymath}
    Then the result follows from Lemma \ref{WhenMistrivial2}.
\end{proof}

\section{On $(\kappa,\Add(X))$-separable modules}\label{sec: separable}
Continuing with our study of the class of (strongly) $(\kappa,\mathrm{Add}(X))$-free modules in this section we shall be preoccupied with the notion of $(\kappa,\mathrm{Add}(X))$-separability:
\begin{definition}\label{def: separable}
  % Let $X$ and $\kappa$ be as above. 
Let $\kappa$  be an infinite regular cardinal and $\mathcal X$, a class of modules.  A module $M$ is called  \emph{$(\kappa, \mathcal X)$-separable} if for each $Y \subseteq M$ with $|Y|<\kappa$ there is $N\leq_{\oplus} M$ with $N\in \mathcal X$ and $Y\s N$.
  
  A $(\kappa, \mathcal X)$-separable module $M$ will be called \emph{trivial} whenever $M\in \mathcal X$.
\end{definition}

In this paper we are interested in $(\kappa,\Add(X))$ and $(\kappa,\Add_\kappa(X))$-separable modules for some fixed module $X$. This seemingly strengthening of $(\kappa,\mathrm{Add}(X))$-separability is equivalent to the former whenever $X$ is a ${<}\kappa$-generated module. Proof of this will be provided  in Lemma~\ref{l:equivalencebetweenseparability} below.

\begin{example}
To demystify  the concept of $(\kappa, \mathrm{Add}_\kappa(X))$-separability  let us consider a concrete example -- the case where $\kappa=\aleph_0$ and $X$ is the ring, $R$. In this scenario a module $M$ is $(\aleph_0,\mathrm{Add}_{\aleph_0}(R))$-separable whenever every finite set $F\s M$ is contained in a finitely-generated projective submodule $N\leq_{\oplus} M.$ Namely, $M$ is a separable module in the traditional sense.
\end{example}
%A trivial observation is that every $M\in \mathrm{Add}(X)$ is $(\kappa, \mathrm{a}$
%The no $(\kappa, \mathrm{Add}(X))$-separability is only of interest when $\kappa$ is a regular cardinal:
%\begin{question}
 % Suppose that $\kappa$ is a singular cardinal. Is every $(\kappa, \mathrm{Add}(X))$-separable module trivial?
%\end{question}
%\begin{proof}
 %   \textcolor{blue}{TODO. If true, it should follow from Shelah's compactness theorem. However, this does not seem to be a compactness-type property. Every small subset of $M$ is contained in a member of $\mathrm{Add}(X)$ but it is false that every small subset of $M$ belongs to $\mathrm{Add}(X)$. }
%\end{proof}
%\man{Usar en todo el artículo una de dos: $\lambda_X<\kappa$ o $X$ es  $<\kappa$-generado}\ale{I think it's fine to pivot between both terminologies. I think it's more clear to say $X$ is ${<}\kappa$-generated but often times the notation $\lambda_X<\kappa$ becomes handy.}
We should like to begin clarifying the connection between $(\kappa,\Add(X))$-separability and strongly $(\kappa,\Add(X))$-freeness.  Namely,

\begin{lemma}\label{lemma: separable implies strongly free}
    Let $\kappa$ be an uncountable regular cardinal and $X$, a module with $\lambda_X<\kappa$. If $M$ is $(\kappa,\Add(X))$-separable then it is strongly $(\kappa,\Add_\kappa(X))$-free.
\end{lemma}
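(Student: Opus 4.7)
The plan is to exhibit an explicit witnessing family $\mathcal S\subseteq \Add_\kappa(X)$ for the strong $(\kappa,\Add_\kappa(X))$-freeness of $M$. Concretely, I would set
\[
\mathcal S=\{N\leq M\mid N\leq_\oplus M,\ N\in\Add(X),\ \lambda_N<\kappa\}.
\]
Since $\Add_\kappa(X)$ coincides with the class of ${<}\kappa$-generated modules in $\Add(X)$ (as noted in the preliminaries, using that $\kappa$ is regular and $\lambda_X<\kappa$), we indeed have $\mathcal S\subseteq\Add_\kappa(X)$, and $0\in\mathcal S$.

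Next I need to verify the extension property. Fix $S\in\mathcal S$ and $Y\subseteq M$ with $|Y|<\kappa$. Let $G$ be a generating set of $S$ with $|G|<\kappa$, and apply the $(\kappa,\Add(X))$-separability of $M$ to $Y\cup G$ (whose cardinality is still ${<}\kappa$ since $\kappa$ is regular). This yields $N\leq_\oplus M$ with $N\in\Add(X)$ containing $Y\cup S$. Now I would invoke Walker's theorem to write $N=\bigoplus_{i\in I}N_i$ with each $N_i$ being $\lambda_X$-generated, and then pick a subset $J\subseteq I$ of size ${<}\kappa$ with $Y\cup S\subseteq\bigoplus_{i\in J}N_i=:S'$; this is possible because $\lambda_X<\kappa$ and $\kappa$ is regular, so each of the fewer than $\kappa$ generators of $Y\cup G$ lies in a direct sum of fewer than $\kappa$ of the $N_i$'s. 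Then $S'$ is a ${<}\kappa$-generated direct summand of $N$, hence of $M$, and lies in $\Add(X)$.

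The last thing to check is that $S\leq_\oplus S'$. This is a standard modular-law argument: writing $M=S\oplus T$, for any $x\in S'$ decomposing $x=s+t$ with $s\in S\subseteq S'$ forces $t=x-s\in S'\cap T$, so $S'=S\oplus(S'\cap T)$. Therefore $S'\in\mathcal S$ witnesses the extension property for $(S,Y)$, and $\mathcal S$ fulfills Definition~\ref{d:StronglyFree} for the class $\Add_\kappa(X)$. The main technical obstacle is the joint use of Walker's theorem together with the regularity of $\kappa$ and the assumption $\lambda_X<\kappa$ to pass from an arbitrary $N\in\Add(X)$ containing $Y\cup S$ to a ${<}\kappa$-generated direct summand; the direct-summand part is free once we have a direct summand of $M$ sitting between $S$ and $M$.
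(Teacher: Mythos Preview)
Your proof is correct and follows essentially the same route as the paper: you take the same witnessing family $\mathcal S$ of ${<}\kappa$-generated direct summands of $M$ lying in $\Add(X)$, apply separability and Walker's theorem to cut down to a ${<}\kappa$-generated direct summand $S'$, and then observe that $S\leq_\oplus M$ forces $S\leq_\oplus S'$. The paper merely states this last step without spelling out the modular-law computation.
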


\begin{proof}
    Let $\mathcal{S}$ be the set of all $<\kappa$-generated direct summands of $M$ that belong to $\Add(X)$. Given $S\in \mathcal{S}$ and $Y\s M$ with $|Y|<\kappa$, we can find, by the $(\kappa,\Add(X))$-separability, $S'\leq_{\oplus} M$ in $\Add(X)$ such that $Y\cup S\s S'$. Since $S'$ is a direct sum of $<\kappa$-generated modules by Walker's lemma, we can find a $<\kappa$-generated direct summand $S''$ of $S'$ containing $Y \cup S$. Then, $S''$ is a direct summand of $M$, so that it belongs to $\mathcal S$, and $S$, being a direct summand of $M$, is a direct summand of $S''$ as well.
\end{proof}

\begin{remark}
Notice that $(\kappa,\Add(X))$-separable modules  need not be $(\kappa,\Add(X))$-free, since strongly $(\kappa,\Add(X))$-free modules need not be $(\kappa,\Add(X))$-free by \cite[Theorem 8]{Trlifaj}.
\end{remark}

Now we give the characterization of separable modules in terms of filtrations:

\begin{lemma}\label{l:equivalencebetweenseparability}
    Let $\kappa$ be an infinite regular cardinal. The following  are equivalent for a $\kappa$-generated module $M$; namely,
    \begin{enumerate}
        \item $M$ is $(\kappa,\mathrm{Add}_\kappa(X))$-separable
        \item $M$ is $(\kappa,\mathrm{Add}(X))$-separable;
        \item $M$ admits a $\kappa$-filtration, $\langle M_\alpha \mid \alpha < \kappa \rangle$ with $M_{\alpha+1} \in \Add_\kappa(M)$ and $M_{\alpha+1} \leq_\oplus M$ for each $\alpha < \kappa$.
    \end{enumerate}
\end{lemma}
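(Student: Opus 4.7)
The plan is to establish the cycle $(1)\Rightarrow(3)\Rightarrow(2)\Rightarrow(1)$, with $(1)\Rightarrow(3)$ being the main content. Throughout I shall use the standing hypothesis $\lambda_X<\kappa$ (implicit from the remark preceding the lemma), which via Walker's theorem makes $\Add_\kappa(X)$ coincide with the $<\kappa$-generated members of $\Add(X)$.

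For $(1)\Rightarrow(3)$, fix a generating set $\{m_\alpha\mid\alpha<\kappa\}$ of $M$ and construct the filtration by recursion on $\alpha<\kappa$. Put $M_0=0$; at a limit stage, set $M_\alpha=\bigcup_{\gamma<\alpha}M_\gamma$; at a successor stage, choose a $<\kappa$-sized generating set $G_\alpha$ of $M_\alpha$ (available by the inductive hypothesis) and apply $(\kappa,\Add_\kappa(X))$-separability of $M$ to $G_\alpha\cup\{m_\alpha\}$, letting $M_{\alpha+1}$ be the resulting direct summand. Then $M_{\alpha+1}\in\Add_\kappa(X)$, $M_{\alpha+1}\leq_\oplus M$, and $M_\alpha\cup\{m_\alpha\}\subseteq M_{\alpha+1}$. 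Regularity of $\kappa$ combined with $\lambda_X<\kappa$ keeps every $M_\alpha$ $<\kappa$-generated: at successors this is built into $\Add_\kappa(X)$, and at limit stages a union of $<\kappa$ many $<\kappa$-generated modules is $<\kappa$-generated. The exhaustion $M=\bigcup_{\alpha<\kappa}M_\alpha$ follows since each $m_\alpha\in M_{\alpha+1}$.

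The implication $(3)\Rightarrow(2)$ is immediate: given $Y\subseteq M$ with $|Y|<\kappa$, regularity of $\kappa$ together with $M=\bigcup_{\alpha<\kappa}M_\alpha$ produces $\alpha<\kappa$ with $Y\subseteq M_\alpha\subseteq M_{\alpha+1}$, and $M_{\alpha+1}$ is a direct summand of $M$ lying in $\Add_\kappa(X)\subseteq\Add(X)$. For $(2)\Rightarrow(1)$, given $Y\subseteq M$ with $|Y|<\kappa$, pick $N\leq_\oplus M$ in $\Add(X)$ with $Y\subseteq N$ and decompose $N=\bigoplus_{i\in I}X_i$ via Walker's theorem with each $X_i$ being $\lambda_X$-generated. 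Because elements of a direct sum have finite support and $|Y|<\kappa$, some $I_0\subseteq I$ of size $<\kappa$ already carries $Y$; then $N_0:=\bigoplus_{i\in I_0}X_i$ is a direct summand of $N$ (hence of $M$), contains $Y$, and lies in $\Add_\kappa(X)$ since each $X_i$ is itself a direct summand of some $X^{(K_i)}$ with $|K_i|\leq\lambda_X$ (by the standard retract-with-finite-support argument), so that $N_0\leq_\oplus X^{(\bigsqcup_{i\in I_0}K_i)}$ with index set of cardinality $<\kappa$.

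The main obstacle is purely book-keeping: verifying that every intermediate module in the recursion stays $<\kappa$-generated under the regularity of $\kappa$, and that the ``small support'' passage in $(2)\Rightarrow(1)$ truly yields a direct summand of $M$ rather than merely of $N$ -- which is automatic since a direct summand of a direct summand is a direct summand. No deeper idea is required beyond Walker's theorem and elementary cardinal arithmetic.
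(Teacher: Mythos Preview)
Your proof is correct and uses the same ingredients as the paper's---a recursive filtration construction from a generating set together with Walker's theorem to cut down to ${<}\kappa$-generated direct summands---though organized along a different cycle. The paper proves $(1)\Rightarrow(2)\Rightarrow(3)\Rightarrow(1)$ and routes the substantive step $(2)\Rightarrow(3)$ through the earlier Lemmas on strong $(\kappa,\Add_\kappa(X))$-freeness (Lemma~\ref{lemma: separable implies strongly free} and Lemma~\ref{l:StrongFree}), whereas you build the filtration directly from $(\kappa,\Add_\kappa(X))$-separability in $(1)\Rightarrow(3)$ and instead do the Walker cut-down in $(2)\Rightarrow(1)$; the effect is a more self-contained argument at the cost of reproving what those lemmas already package. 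One minor simplification: in your $(2)\Rightarrow(1)$ you need not exhibit $N_0$ as a summand of an explicit $X^{(<\kappa)}$---the paper's remark that $\Add_\kappa(X)$ coincides with the ${<}\kappa$-generated members of $\Add(X)$ (for $\kappa$ regular and $\lambda_X<\kappa$) gives $N_0\in\Add_\kappa(X)$ immediately from $N_0\leq_\oplus N\in\Add(X)$ and $|I_0|\cdot\lambda_X<\kappa$.
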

\begin{proof}

 (1) $\Rightarrow$ (2): Is obvious.
 
 \smallskip

    (2) $\Rightarrow$ (3): The proof of Lemma \ref{lemma: separable implies strongly free} implies that there exists a set $\mathcal S$ of direct summands of $M$ witnessing the strong $(\kappa,\Add_\kappa(X))$-freeness of $M$. Using the proof of (2) $\Rightarrow$ (3) of Lemma \ref{l:StrongFree}, we can find a $\kappa$-filtration of $M$, $\langle M_\alpha \mid \alpha < \kappa\rangle$, with $M_{\alpha+1} \in \mathcal S$ for each $\alpha < \kappa$. In particular, $M_{\alpha+1}\leq_{\oplus} M$ and $M_{\alpha+1}\in \Add_\kappa(X)$ for each $\alpha < \kappa$.

(3) $\Rightarrow$ (1): %Suppose that $\langle M_\alpha\mid \alpha<\kappa\rangle$ is a $(\kappa, \mathrm{Add}(X))$-filtration.  
Trivial.
\end{proof}
%One consequence of this result is that we can use Proposition \ref{p:Strongly free are trivial} to check if a $(\kappa,\mathrm{Add}(X))$-separable module is trivial. \ale{I do not understand this sentence. Propo 3.10 refers to when a free group is trivial.}\smallskip

In the next section we will employ Lemma~\ref{l:equivalencebetweenseparability} to produce a $\kappa^+$-generated non-trivial $(\kappa^+,\mathrm{Add}(X))$-separable module for an infinite (non-necessarily regular)  $\kappa$ -- this will yield an extension of  \cite[Theorem~4.7]{CortesGuilTorrecillas}. In particular, this will provide an example of a non-trivial strongly $(\kappa^+,\Add(X))$-free module (see Lemma~\ref{lemma: separable implies strongly free}). %\ale{Maybe this sentence has to be redacted? }The main difference between the module we construct next with the one of Theorem~\ref{t:ConstructionStronglyFree} is that the former will not be $(\kappa^+,\Add(X))$-free.

\subsection{A construction of  a non-trivial $(\kappa,\mathrm{Add}(X))$-separable module}

The main construction of this section is based on the notions of a template and of a tree-like ladder system, both introduced next:

\begin{definition}\label{template}
    Let $\lambda$ be an infinite regular cardinal and $X$, a module. A triple $\langle N, L, {\mathcal{N}}\rangle$ is a \emph{$(\lambda,\Add(X))$-template} if $N\s L$ are modules, $\mathcal{N}$ is a continuous filtration of $N$, $\langle N_\alpha\mid \alpha<\lambda\rangle$, satisfying $N_\alpha\leq_{\oplus} L$, $L \in \Add(X)$ and $N\nleq_{\oplus} L.$
\end{definition}

%The next concept will be  instrumental in Section~\ref{sec: separable}:
\begin{definition}[Tree-like ladder system]
 Let $\kappa$ be an uncountable regular cardinal and $S\s\kappa$, a stationary set consisting of limit ordinals of some fixed cofinality $\lambda < \kappa$. A sequence $\langle c_\eta\mid \eta\in S\rangle$ is called a \emph{ladder system} if $c_\eta\colon \lambda\rightarrow \eta$ is a strictly increasing function and the range of $c_\eta$ is unbounded in $\eta.$ A ladder system $\langle c_\eta\mid \eta\in S\rangle$  is said to be \emph{tree-like} if for each $\eta, \zeta\in S$,  $\delta,\delta'<\lambda$  $$\text{$c_\eta(\delta)=c_\zeta(\delta')\;\Rightarrow\; \delta=\delta'\;\text{and}\; c_\eta\restriction \delta=c_\zeta\restriction\delta.$}$$
\end{definition}

 \begin{remark}\label{RemarkOnLadders}
 Given a tree-like ladder system $\langle c_\eta\mid \eta\in S\rangle$  one can define another tree-like ladder system by stipulating  $c^*_\eta(\xi)=c_\eta(\xi)+1$ for any $\xi<\lambda$. As a consequence, we may assume that $\mathrm{Im}(c_\eta)\cap S=\emptyset$ and that $\Img(c_\eta)\cap \{s+1\mid s \in S\}=\emptyset$ for all $\eta\in S$.
We will assume that all of our ladder systems  satisfy this.
 \end{remark}

Every stationary subset $S$ (consisting of ordinals of some fixed cofinality) of every uncountable cardinal $\kappa$ carries a ladder system. If $\kappa =\aleph_1$, this ladder system can be constructed with the tree-like ladder property (see \cite[Exercise XII.17]{EklofMekler}). If $\kappa>\aleph_1$ and $S$ belongs to the approchability ideal of $\kappa$, then it is possible to find a club $C$ such that $E \cap C$ has a tree-like ladder system \cite[Lemma VI.5.13]{EklofMekler}. Other tree-like ladder systems have been constructed using aditional set theoretical hypothesis, see \cite[Theorem 9]{Eklof2} and \cite[Proposition 2.1]{CortesGuilTorrecillas}.

\begin{definition}
    Given cardinals  $\lambda<\mu$ with $\lambda$ regular %and a regular cardinal $\lambda$ with  $\lambda<\kappa$ we 
denote by $(\star)_{\mu,\lambda}$ the conjunction of the next two sentences: % the next two assertions:
\begin{enumerate}
	\item There is a non-reflecting stationary set $S\s E^{\mu^+}_{\lambda}$.
	\item There is a tree-like ladder system
 $\langle c_\eta\mid \eta\in S\rangle$. 
 %consists either of successor ordinals 
% \begin{itemize}
 %    \item The range of $c_\eta$ consists either of successor ordinals 
% \end{itemize}
 \end{enumerate}
\end{definition}

 \begin{remark}
If $\mu$ is a singular cardinal and $\lambda<\mu$ is regular, the existence of a stationary set $S\s E^{\mu^+}_{\lambda}$  in the approachability ideal $I[\mu^+]$ turns to be  a ZFC theorem -- this is due to Shelah (see \cite[Theorem~3.18]{Eisworth})). Combining this with \cite[Lemma~5.13]{EklofMekler} one can construct (in ZFC) a tree-like ladder system  supported on an  approachable stationary $S\s E^{\mu^+}_\lambda$.
However, it is consistent with ZFC that every stationary subset of $\mu^+$ reflects. This holds  in  Magidor's model for stationary reflection  at $\aleph_{\omega+1}$ \cite{Magidor}. For details see \cite[Corollary~3.41]{Eisworth}.
\end{remark}
 Under appropriate set-theoretic assumptions upon $\mu$ one can prove that  $(\star)_{\mu,\lambda}$ is consistent with the ZFC axioms. For instance:
 \begin{lemma}\label{l:StarIsConsistent}
 	If $\square_\mu$ holds then $(\star)_{\mu,\lambda}$ holds for all $\lambda<\mu$ regular.
 	
 	In particular, if $0^\sharp$ does not exist then $(\star)_{\mu,\lambda}$ holds for all singular cardinals $\mu$ and $\lambda<\mu$ regular.%\man{En la segunda parte, el principio se verifica para todo $\mu$ singular, no?}
 \end{lemma}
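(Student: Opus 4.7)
The plan is to combine two classical consequences of $\square_\mu$: the existence of a non-reflecting stationary subset of $E^{\mu^+}_\lambda$, and the fact that such a set lies in the approachability ideal $I[\mu^+]$. Fix a $\square_\mu$-sequence $\langle C_\alpha \mid \alpha < \mu^+\rangle$ and a regular cardinal $\lambda < \mu$. I would first define
\[
S_0 := \{\eta \in E^{\mu^+}_\lambda \mid \mathrm{otp}(C_\eta) = \lambda\}
\]
and verify that $S_0$ is stationary and non-reflecting in $\mu^+$. Stationarity is a standard consequence of the $\square_\mu$-sequence. For non-reflection, suppose toward contradiction that some $\beta < \mu^+$ with $\cf(\beta) > \omega$ has $S_0 \cap \beta$ stationary in $\beta$. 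Since $\mathrm{Lim}(C_\beta)$ is a club in $\beta$ (using $\cf(\beta) > \omega$), pick $\eta \in S_0 \cap \mathrm{Lim}(C_\beta)$. The coherence of the $\square_\mu$-sequence yields $C_\eta = C_\beta \cap \eta$, whence $\mathrm{otp}(C_\eta)$ equals the position of $\eta$ in the increasing enumeration of $C_\beta$; but at most one element of $C_\beta$ can sit at position $\lambda$, contradicting stationarity.

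Next, I would check that $S_0 \in I[\mu^+]$. For each $\eta \in S_0$, a cofinal subsequence of $C_\eta$ of order type $\lambda$ is an approaching sequence: by coherence, every proper initial segment coincides (up to bounded modification) with some $C_\gamma$ for $\gamma < \eta$, so it already appears in the fixed enumeration $\langle C_\alpha \mid \alpha < \mu^+\rangle$. Thus $S_0$ is approachable, and \cite[Lemma VI.5.13]{EklofMekler} yields a club $C \subseteq \mu^+$ together with a tree-like ladder system $\langle c_\eta \mid \eta \in S_0 \cap C\rangle$. Setting $S := S_0 \cap C$, the set $S$ is stationary (stationary intersected with club), inherits non-reflection from $S_0$, and carries a tree-like ladder system. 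This is precisely the content of $(\star)_{\mu,\lambda}$.

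For the second assertion, I would invoke Jensen's Covering Theorem: if $0^\sharp$ does not exist, then $\square_\mu$ holds in $V$ for every singular cardinal $\mu$, and the first part applies. The main obstacle is the non-reflection step in the first paragraph, which depends on a careful use of coherence; the remaining ingredients -- stationarity of $S_0$, its approachability, the Eklof--Mekler ladder-construction lemma, and Jensen's Covering Theorem -- are well-established tools that can be imported directly.
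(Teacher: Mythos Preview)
Your proof is correct and follows essentially the same route as the paper's: obtain a non-reflecting stationary $S\subseteq E^{\mu^+}_\lambda$ from $\square_\mu$, observe it lies in $I[\mu^+]$, invoke the Eklof--Mekler lemma to produce a tree-like ladder system on $S\cap C$, and handle the $0^\sharp$ clause via Jensen's covering. The only difference is cosmetic: where the paper cites external results (Cummings--Foreman--Magidor for non-reflection, and the blanket fact that $\square_\mu$ implies $\mu^+\in I[\mu^+]$), you unpack both directly from the $\square$-sequence.
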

 \begin{proof}
 	Since $\square_\mu$ holds we can let a non-reflecting stationary set $S\s E^{\mu^+}_{\lambda}$ \cite[Theorem~2.1]{CumForMag}. In addition $S$ is approachable (i.e., $S\in I[\mu^+]$) because so is $\mu^+$ and $I[\mu^+]$ is an ideal. Now \cite[Lemma~5.13]{EklofMekler} yields a club $C\s\mu^+$ and a tree-like ladder system $\langle c_\eta\mid \eta\in S^*\rangle$ where $S^*:=S\cap C$. By \cite[Theorem 2.4]{Eisworth}, $S^*$ is a non-reflecting stationary subset of $E^{\mu^+}_{\lambda}$ and $\langle c_\eta\mid \eta\in S^*\rangle$ is the sought ladder system. The last assertions follows from the fact that if $0^\sharp$ does not exists then $\square_\mu$ holds for every singular cardinal $\mu$.
 \end{proof} 
We remind our readers that $``\lambda_X<\kappa$'' was a shorthand for $``X$ is ${<}\kappa$-generated''. The main theorem of the section reads as follows:
\begin{theorem}\label{MainTheorem}
   Let $\lambda=\cf(\lambda)<\kappa$ be cardinals  witnessing $(\star)_{\kappa,\lambda}$. Let  $X$ be a module with $\lambda_X < \kappa$, and suppose that there is a $(\lambda,\mathrm{Add}(X))$-template, $\langle N,L,\mathcal{N}\rangle$.  Then there is a $\kappa^+$-generated non-trivial $(\kappa^+,\Add(X))$-free and $(\kappa^+,\mathrm{Add}(X))$-separable module.
\end{theorem}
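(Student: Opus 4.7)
The plan is to construct $M$ together with a distinguished $\kappa^+$-filtration $\langle M_\alpha \mid \alpha < \kappa^+\rangle$ by transfinite recursion using the tree-like ladder system $\langle c_\eta \mid \eta \in S\rangle$ supplied by $(\star)_{\kappa,\lambda}$. For each $\eta \in S$, fix an isomorphic copy $\langle N^\eta, L^\eta, \mathcal{N}^\eta \rangle$ of the template, with $\mathcal{N}^\eta = \langle N^\eta_\delta \mid \delta < \lambda\rangle$. The recursion sets $M_0 = 0$, takes unions at limits, places $M_{\alpha+1} = M_\alpha \oplus Y_\alpha$ at successor stages $\alpha+1$ with $\alpha \notin S$ (for a filler $Y_\alpha \in \Add_{\kappa^+}(X)$), and at $\eta+1$ for $\eta \in S$ lets $M_{\eta+1}$ be the pushout $M_\eta \cup_{N^\eta} L^\eta$ along a previously arranged embedding $N^\eta \hookrightarrow M_\eta$. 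To secure this coherent embedding, we use the tree-likeness: the set $T = \{c_\eta \restriction \delta \mid \eta \in S,\, \delta \leq \lambda\}$ is a tree under extension, and to each node $t \in T$ of length $\delta$ one attaches a single copy $N_t \cong N_\delta$; the filler $Y_{c_\eta(\delta)}$ (legitimate since $c_\eta(\delta) \notin S$ by Remark~\ref{RemarkOnLadders}) is chosen to contain the relevant $N_t$'s as direct summands. Taking continuous unions along each branch of $T$ yields the copy of $N^\eta$ placed inside $M_\eta$.

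Granted the construction, the filtration witnesses $(\kappa^+,\Add(X))$-freeness via Proposition~\ref{prop: filtration and freeness}. For $(\kappa^+,\Add(X))$-separability, Lemma~\ref{l:equivalencebetweenseparability} reduces us to showing $M_{\alpha+1}\leq_\oplus M$ for every $\alpha<\kappa^+$, i.e., to producing a global projection $M\to M_{\alpha+1}$. The key observation is that for each $\eta \in S$ with $\eta>\alpha$, one has $L^\eta \cap M_{\alpha+1} = N^\eta_{\delta_\eta}$, where $\delta_\eta = \sup\{\delta<\lambda \mid c_\eta(\delta)\leq\alpha\}<\lambda$; since $N^\eta_{\delta_\eta} \leq_\oplus L^\eta$ by the template, one may project $L^\eta$ onto $N^\eta_{\delta_\eta}$. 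Tree-likeness glues these $\eta$-by-$\eta$ projections into a coherent map on $M$, while the non-reflection of $S$ guarantees that the compatibility conditions at limit stages can be resolved.

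Non-triviality is the punchline and follows from Proposition~\ref{p:Strongly free are trivial}, noting that $M$ is strongly $(\kappa^+,\Add(X))$-free by Lemma~\ref{lemma: separable implies strongly free}. One shows $S \subseteq E'' = \{\alpha<\kappa^+ \mid M_\alpha \not\leq_\oplus M_{\alpha+1}\}$: were $M_\eta \leq_\oplus M_{\eta+1}$ for some $\eta \in S$, then the pushout structure of $M_{\eta+1} = M_\eta \cup_{N^\eta} L^\eta$, combined with the direct summand decompositions $L^\eta = N^\eta_\delta \oplus K^\eta_\delta$ and a careful splitting analysis (transferring the splitting of $M_\eta$ in $M_{\eta+1}$ back to $L^\eta$ by projecting off $K^\eta_\delta$), would force $N^\eta \leq_\oplus L^\eta$, contradicting the template. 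Hence $E''$ is stationary and $M \notin \Add(X)$. The principal difficulty is the separability step, where one must exploit simultaneously the tree-like structure of the ladders and the non-reflection of $S$ to exhibit a coherent global projection $M \to M_{\alpha+1}$ at every $\alpha<\kappa^+$.
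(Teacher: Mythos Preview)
Your overall architecture matches the paper's: a $\kappa^+$-filtration built by recursion, with direct-sum fillers at non-$S$ successor stages and a pushout $M_\eta \cup_{N^\eta} L^\eta$ at each $\eta\in S$, the embedding $N^\eta\hookrightarrow M_\eta$ being assembled from pieces planted along the ladder $c_\eta$. Your tree $T$ of partial ladders is just a repackaging of the paper's indexing $N_{\gamma,\nu}$ (tree-likeness makes the two bookkeepings equivalent).

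There is, however, a genuine gap in how you allocate the hypotheses. The construction of the global projection $M\to M_{\alpha+1}$ does \emph{not} require non-reflection of $S$: it is a straightforward recursion on $\beta<\kappa^+$, taking direct limits at limit stages and invoking the pushout universal property at $\mu+1$ with $\mu\in S$; tree-likeness alone guarantees the well-definedness of the map on the fillers. What non-reflection is actually needed for---and what you have omitted entirely---is the verification that $M_{\alpha}\in\Add(X)$ for $\alpha\notin S$. At a limit $\alpha\notin S$ with $\cf(\alpha)\geq\omega_1$, one uses that $S\cap\alpha$ is non-stationary to find a club $C\subseteq\alpha$ disjoint from $S$, and then $\langle M_\beta\mid\beta\in C\rangle$ exhibits $M_\alpha$ as a direct sum of members of $\Add(X)$. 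Without this step you cannot invoke Proposition~\ref{prop: filtration and freeness} for freeness, nor Lemma~\ref{l:equivalencebetweenseparability} for separability (which requires $M_{\alpha+1}\in\Add_{\kappa^+}(X)$, not merely $M_{\alpha+1}\leq_\oplus M$).

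Your non-triviality sketch is also too quick. The paper does not ``project off $K^\eta_\delta$'' level by level; instead it first proves (as a separate lemma) that the embedding $\iota_\eta\colon N^\eta\hookrightarrow M_\eta$ is a split monomorphism---this uses that $\mathrm{Im}(\iota_\eta)=\bigoplus_{\nu<\lambda} N'_{c_\eta(\nu),\nu}$ sits as a summand inside $\bigoplus_{\nu<\lambda} M'_{c_\eta(\nu)}$, which in turn is a summand of $M_\eta$ via a club argument in $\eta$. Once $\iota_\eta$ splits, a hypothetical splitting of $i_\eta\colon M_\eta\to M_{\eta+1}$ makes $\theta_\eta\circ i=i_\eta\circ\iota_\eta$ split, whence $i\colon N\to L$ splits, contradicting the template.
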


Let  $\langle c_\eta\mid \eta\in S\rangle$ be a tree-like ladder system   witnessing $(\star)_{\kappa,\lambda}$. %supported on a non-reflecting stationary set $S\s E^{\kappa^+}_{\lambda}$.
The module $M$ witnessing the thesis of Theorem~\ref{MainTheorem} will be obtained as a union of a $\kappa^+$-filtration, $\langle M_\alpha\mid \alpha<\kappa^+\rangle$, such that $M_{\alpha +1} \in \Add(X)$ for each $\alpha < \kappa^+$ and $$S\s \{\alpha<\kappa^+\mid M_{\alpha}\nleq_{\oplus} M_{\alpha+1}\}.$$ This will entail the non-triviality of $M$ (i.e., $M\notin\Add(X)$) by virtue of Proposition \ref{p:Strongly free are trivial} and Lemma~\ref{l:equivalencebetweenseparability}.  % is non-trivial.

\smallskip

The idea is to construct each  $M_{\alpha+1}$ by taking direct sums of isomorphic copies of members $N_\nu$ of our fixed template. Before entering into further considerations let us agree upon some notations.
\begin{notation}
    For each $\nu<\lambda$ and $\gamma<\kappa^+$ let us fix $N_{\gamma,\nu}$ an isomorphic copy of $N_\nu$. Also, since $N_\nu\leq_{\oplus}N_{\nu+1}$  there is $N'_\nu$ such that $N_{\nu+1}=N_\nu\oplus N'_\nu$. Thus, for each $\gamma<\kappa^+$, we may let isomorphic copies $N^*_{\gamma,\nu}\simeq N_\nu$ and $N'_{\gamma,\nu}\simeq N'_\nu$ such that $$N_{\gamma,\nu+1}=N^*_{\gamma,\nu}\oplus N'_{\gamma,\nu}.$$
    Let us fix  an isomorphism $s^\nu_{\gamma}\colon N'_\nu\rightarrow N'_{\gamma,\nu}$. 
\end{notation}

Let us proceed with the construction. First, set $M_0:=0$. Assuming that $\langle M_\beta\mid \beta<\alpha\rangle$ has been defined we construct $M_\alpha$ as follows. If $\alpha$ is a limit ordinal we simply set $M_{\alpha}:=\bigcup_{\beta<\alpha} M_\beta$. If $\alpha=\alpha_*+1$  we distinguish among two cases:

\medskip

\underline{\textbf{Case $\alpha_*\notin S$}:}\label{EasyCase} Let $M'_{\alpha_*}=\oplus_{\nu<\lambda} N_{\alpha_*,\nu}$ and define
$$M_{\alpha}:=M_{\alpha_*}\oplus M'_{\alpha_*}.$$ 

\smallskip

\underline{\textbf{Case $\alpha_*\in S$}:} In this case $M_{\alpha}$ is defined as the pushout between the inclusion $i\colon N\rightarrow L$ and a morphism $\iota_{\alpha_*}\colon N\rightarrow M_{\alpha_*}$ that we are yet to define. In turn, $\iota_{\alpha_*}$ will be defined as the limit of a direct system of morphisms $$\langle \iota^\nu_{\alpha_*}\colon N_\nu\rightarrow M_{\alpha_*},\; \nu<\lambda\rangle.$$
 First, $\iota^0_{\alpha_*}$ is declared to be the zero isomorphism. If we have constructed the direct system $\langle \iota^\sigma_{\alpha_*}\mid \sigma<\nu\rangle$ for some $\nu < \lambda$, then one takes $\iota^\nu_{\alpha_*}:=\varinjlim_{\sigma<\nu} \iota^\sigma_{\alpha_*}$ whenever $\nu$ is a limit ordinal; otherwise, if $\nu=\sigma_*+1$ one takes  $\iota^\nu_{\alpha_*}:=\iota^{\sigma_*}_{\alpha_*}\oplus s^{\sigma_*}_{c_{\alpha_*}(\sigma_*)}$. We note that $\iota^\nu_{\alpha_*}$ is well-defined. In the limit case this is evident and in the successor case it follows from
\begin{itemize}
	\item $s^{\sigma_*}_{c_{\alpha_*}(\sigma_*)}\colon N'_{\sigma_*}\rightarrow N'_{c_{\alpha_*}(\sigma_*), \sigma_*}$
	\item and $ N'_{c_{\alpha_*}(\sigma_*), \sigma_*} \leq_{\oplus} M'_{c_{\alpha_*}(\sigma_*)}\leq M_\alpha$ (as $c_{\alpha_*}(\sigma_*)<\alpha_*$).
\end{itemize}   
Finally, put $\iota_{\alpha_*}:=\varinjlim_{\nu<\lambda} \iota^\nu_{\alpha_*}$.\label{pushout} %Finally, as mentioned above, 
Let $M_{\alpha_*
+1}$ be the outcome of the pushout %the inclusion $i\colon N\rightarrow L$ and $\iota_{\alpha_*}\colon N\rightarrow M_{\alpha}$. This yields a commutative diagram
\begin{equation}\label{e:Pushout}
\begin{tikzcd}
N \arrow{r}{i} \arrow{d}{\iota_{\alpha_*}} & L \arrow{d}{{\theta_{\alpha_*}}} \\
  M_{\alpha_*} \arrow{r}{i_{\alpha_*}} & M_{\alpha_*+1}.
\end{tikzcd}
\end{equation}
%Let $i_{\alpha_*}\colon M_{\alpha_*}\rightarrow M_{\alpha_*+1}$ be the morphism arising from this pushout.

\medskip

The above completes our construction of the sequence $\langle M_\alpha\mid \alpha<\kappa^+\rangle$. Let us now prove that this is indeed a $\kappa^+$-filtration satisfying Lemma \ref{l:equivalencebetweenseparability}.

First, we see that $M_{\alpha}$ is a direct summand of $M$ when $\alpha$ does not belong to $S$. We will use the following technical fact:

\begin{lemma}\label{l:Technical}
    Let $\eta \in S$ and $\delta < \kappa^+$ with $\delta < \eta$. Set $\tau_{\eta,\delta}=\sup\{\sigma+1\mid c_\eta(\sigma)\leq \delta\}$. Then, for any $\nu < \tau_{\eta,\delta}$, $c_\eta(\nu)\leq \delta$.
\end{lemma}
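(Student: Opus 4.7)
The plan is to unfold the supremum defining $\tau_{\eta,\delta}$ and then invoke the strict monotonicity of the ladder function $c_\eta$, both of which are built into the definition of a tree-like ladder system.

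Write $A_{\eta,\delta}:=\{\sigma+1\mid c_\eta(\sigma)\leq \delta\}$, so that $\tau_{\eta,\delta}=\sup A_{\eta,\delta}$. If $A_{\eta,\delta}=\emptyset$ then $\tau_{\eta,\delta}=0$ and the assertion holds vacuously. Otherwise, fix $\nu<\tau_{\eta,\delta}$. Since $\nu$ cannot be an upper bound for $A_{\eta,\delta}$, there must exist $\sigma<\lambda$ with $c_\eta(\sigma)\leq\delta$ and $\nu<\sigma+1$, i.e.\ $\nu\leq\sigma$. Because $c_\eta\colon\lambda\to\eta$ is strictly increasing, it then follows that $c_\eta(\nu)\leq c_\eta(\sigma)\leq\delta$, as required.

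There is no genuine obstacle: the statement is pure bookkeeping around the two defining features of a ladder system. Its role in the sequel will presumably be to keep track, at a stage $\alpha_*\in S$ with $\delta<\alpha_*$, of the longest initial segment of the ladder $c_{\alpha_*}$ whose values have already been enumerated below $\delta$. This is precisely the information needed to verify that the direct limit $\iota_{\alpha_*}=\varinjlim_{\nu<\lambda}\iota^\nu_{\alpha_*}$ appearing in the pushout construction coheres with the direct summands $N'_{c_{\alpha_*}(\sigma),\sigma}\leq_\oplus M'_{c_{\alpha_*}(\sigma)}$ that were deployed at stages preceding $\alpha_*$, so that the asserted $\kappa^+$-filtration of $M$ is well-defined.
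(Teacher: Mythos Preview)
Your proof is correct and essentially identical to the paper's: the paper argues by contrapositive (if $c_\eta(\nu)>\delta$ then $\nu$ is an upper bound for $\{\sigma+1\mid c_\eta(\sigma)\le\delta\}$, so $\tau_{\eta,\delta}\le\nu$), while you argue directly, but both are the same elementary unfolding of the supremum combined with the strict monotonicity of $c_\eta$.
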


\begin{proof}
    If $c_\eta(\nu)>\delta$, then $\nu$ is an upper bound of the set $\{\sigma+1\mid c_\eta(\sigma)\leq \delta\}$ so that $\tau_{\eta,\delta}\leq \nu$. Consequently, if $\nu < \tau_{\eta,\delta}$ then it is satisfied that $c_\eta(\nu) \leq \delta$.
\end{proof}

\begin{lemma}\label{LemmaDirectSummads}
     For each $\alpha\notin S$, $M_{\alpha}\leq_{\oplus} M$.

     In particular, $M_{\alpha+1}\leq_{\oplus} M$ for all $\alpha<\kappa^+.$
 \end{lemma}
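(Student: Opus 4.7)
The plan is to prove, by transfinite induction on $\beta\in[\alpha,\kappa^+]$, that $M_\alpha\leq_\oplus M_\beta$ via a \emph{coherent} family of complements $\{P_{\alpha,\beta}\}$, meaning $P_{\alpha,\gamma}\subseteq P_{\alpha,\beta}$ whenever $\alpha\leq\gamma\leq\beta$. Setting $P_{\alpha,\kappa^+}:=\bigcup_{\beta<\kappa^+}P_{\alpha,\beta}$ then yields $M_\alpha\leq_\oplus M$. The ``in particular'' clause follows immediately since $S$ consists of limit ordinals, so $\alpha+1\notin S$ for every $\alpha<\kappa^+$.

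Take $P_{\alpha,\alpha}:=0$. At a limit stage $\beta$, set $P_{\alpha,\beta}:=\bigcup_{\alpha\leq\gamma<\beta}P_{\alpha,\gamma}$; coherence plus continuity of $\langle M_\gamma\rangle$ ensure $M_\beta=M_\alpha\oplus P_{\alpha,\beta}$. For a clean successor $\beta_*+1$ with $\beta_*\notin S$, use $M_{\beta_*+1}=M_{\beta_*}\oplus M'_{\beta_*}$ to declare $P_{\alpha,\beta_*+1}:=P_{\alpha,\beta_*}\oplus M'_{\beta_*}$.

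The essential case is the pushout successor $\beta_*+1$ with $\beta_*\in S$ and $\beta_*>\alpha$. Let $\tau:=\min\{\sigma<\lambda\mid c_{\beta_*}(\sigma)\geq\alpha\}$; invoking Lemma~\ref{l:Technical} (or a small variant thereof), $c_{\beta_*}(\sigma)<\alpha$ for every $\sigma<\tau$ and $c_{\beta_*}(\sigma)\geq\alpha$ for every $\sigma\geq\tau$. By Fact~\ref{fact: when M is a direct sum}, $N=N_\tau\oplus N^{\geq\tau}$ with $N^{\geq\tau}:=\bigoplus_{\tau\leq\sigma<\lambda}N'_\sigma$. Inspecting the inductive construction of $\iota_{\beta_*}$ as $\bigoplus_\sigma s^\sigma_{c_{\beta_*}(\sigma)}$ shows $\iota_{\beta_*}(N_\tau)=\iota_{\beta_*}^\tau(N_\tau)\subseteq\bigoplus_{\sigma<\tau}M'_{c_{\beta_*}(\sigma)}\subseteq M_\alpha$, while $\iota_{\beta_*}(N^{\geq\tau})\subseteq\bigoplus_{\tau\leq\sigma<\lambda}M'_{c_{\beta_*}(\sigma)}\subseteq P_{\alpha,\beta_*}$ by the coherent inductive hypothesis applied to each $c_{\beta_*}(\sigma)+1\leq\beta_*$. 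Invoke the template condition $N_\tau\leq_\oplus L$ to split $L=N_\tau\oplus L'_\tau$, and construct a retraction $r\colon M_{\beta_*+1}\to M_\alpha$ via the pushout's universal property by taking $r_1\colon M_{\beta_*}\to M_\alpha$ to be the projection along $M_{\beta_*}=M_\alpha\oplus P_{\alpha,\beta_*}$ and $r_2:=\iota_{\beta_*}^\tau\circ\mathrm{pr}_{N_\tau}\colon L\to M_\alpha$, where $\mathrm{pr}_{N_\tau}$ is the projection along $L=N_\tau\oplus L'_\tau$. Setting $P_{\alpha,\beta_*+1}:=\ker r$ completes the step, with coherence $P_{\alpha,\beta_*}\subseteq P_{\alpha,\beta_*+1}$ automatic from $r_1|_{P_{\alpha,\beta_*}}=0$.

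The main obstacle is verifying the compatibility condition $r_1\circ\iota_{\beta_*}=r_2\circ i$ on $N$: this is equivalent to $\mathrm{pr}_{N_\tau}$ and the intrinsic projection $N\to N_\tau$ (along $N=N_\tau\oplus N^{\geq\tau}$) agreeing on $N$, or in other words to $L'_\tau$ containing $N^{\geq\tau}$. I would arrange this by imposing at the outset a coherent choice of the template data: a decreasing chain $\{C_\sigma\}_{\sigma<\lambda}$ with $L=N_\sigma\oplus C_\sigma$ and $C_{\sigma+1}\subseteq C_\sigma$, declaring $N'_\sigma:=N_{\sigma+1}\cap C_\sigma$. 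Such a chain is built by transfinite recursion on $\sigma<\lambda$, using the template hypothesis $N_\sigma\leq_\oplus L$ at successor stages and suitable intersections at limit stages; this coherence guarantees $N^{\geq\tau}\subseteq C_\tau=L'_\tau$ for every $\tau$, so that $r$ is well-defined and the coherent family $\{P_{\alpha,\beta}\}$ propagates smoothly through the induction.
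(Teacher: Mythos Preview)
Your overall architecture—building a coherent family of complements $P_{\alpha,\beta}$ by induction on $\beta$, and at a pushout stage producing a retraction $r\colon M_{\beta_*+1}\to M_\alpha$ via the universal property—is essentially the same strategy as the paper's (which builds projections $p_\beta\colon M_\beta\to M_{\delta+1}$). The divergence is in how you arrange the compatibility $r_1\circ\iota_{\beta_*}=r_2\circ i$, and this is where your argument breaks.

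You correctly identify that compatibility reduces to $N^{\geq\tau}\subseteq L'_\tau=C_\tau$, and you propose to secure this by constructing a \emph{decreasing} chain of complements $\{C_\sigma\}_{\sigma<\lambda}$ with $L=N_\sigma\oplus C_\sigma$ and $N'_\sigma:=N_{\sigma+1}\cap C_\sigma$. The successor step of this recursion is fine. The limit step is not: taking $C_\sigma:=\bigcap_{\nu<\sigma}C_\nu$ certainly gives $N_\sigma\cap C_\sigma=0$, but there is no reason whatsoever that $N_\sigma+C_\sigma=L$. Given $\ell\in L$ with decompositions $\ell=n_\nu+c_\nu$, coherence forces $n_\mu-n_\nu\in C_\nu$ for $\nu<\mu$, so the sequence $(n_\nu)$ is ``Cauchy'' relative to the filtration by the $C_\nu$'s—but nothing guarantees it has a limit in $N_\sigma$. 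The template hypothesis $N_\sigma\leq_\oplus L$ only provides \emph{some} complement, not one contained in all previous $C_\nu$. Indeed, the very phenomenon that makes the template non-trivial (namely $N=\bigcup_\sigma N_\sigma\nleq_\oplus L$ despite each $N_\sigma\leq_\oplus L$) is the same obstruction, just occurring at $\lambda$ rather than at a limit below $\lambda$; there is no reason it cannot already occur at limits below $\lambda$ when $\lambda>\omega$. Since $\tau$ ranges over all of $\lambda$ as $\alpha$ and $\beta_*$ vary, you genuinely need the $C_\sigma$ at every limit $\sigma<\lambda$, so this gap cannot be sidestepped.

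The paper avoids this problem entirely: it fixes \emph{arbitrary} (not coherent) idempotents $\pi_\nu\colon L\to L$ with image $N_\nu$, and instead enforces an auxiliary property (P2) on the projections $p_\beta$ at the ``ladder'' successors $c_\eta(\sigma)+1$. The tree-like hypothesis on the ladder system is then used to show that the prescription in (P2) is independent of the choice of $\eta$ with $c_\eta(\sigma)=\mu$, so that $p_{\mu+1}$ is well-defined. This is precisely what replaces your (unavailable) coherent chain $\{C_\sigma\}$.
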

 \begin{proof}
  %The construction is the same as the one provided in \cite[Proposition~4.4]{CortesGuilTorrecillas} but we spell out the details for the reader's convenience. 
  Fix $\alpha\notin S$. One has to consider two cases; namely, either $\alpha$ is  successor  or it is limit. The latter case will  follow from the former in that $M_{\alpha}\leq_{\oplus} M_{\alpha+1}\leq_{\oplus} M$. Consequently we focus on analyzing the case where $\alpha$ is of the form $\delta+1$.

  \smallskip

  First, for each $\nu<\lambda$ (as $N_\nu\leq_{\oplus} L$) we fix $\pi_{\nu}\colon L\rightarrow L$  an idempotent endomorphism such that $\mathrm{Im}(\pi_\nu)=N_\nu$. For each $\eta\in S$ above $\delta+1$ let $\tau_\eta$ the ordinal $\tau_{\eta,\delta}$ defined in Lemma \ref{l:Technical}.

  \smallskip

To show that $M_{\delta+1}$ is a direct summand of $M$ we construct a projection $$p\colon M\rightarrow M_{\delta+1}.$$ Namely, $p$ will be a homomorphism such that $p\restriction M_{\delta+1}=1_{M_{\delta+1}}.$ In turn, $p$ is defined as the direct limit of a direct system of homomorphisms $$\langle p_\beta\colon M_\beta\rightarrow M_{\delta+1}\mid \beta<\kappa^+\rangle$$ satisfying:
\begin{enumerate}
    \item[(P1)] If $\beta \leq \delta+1$, then $p_\beta \rest M_{\delta+1}=1_{M_{\delta+1}}$, and

    \item[(P2)] if $\beta=c_\eta(\sigma)>\delta+1$ for some $\eta>\delta$ belonging to $S$, then $p_{\beta+1}\rest N'_{\beta,\sigma}=\iota_\eta\circ\pi_{\tau_\eta}\circ(s_\beta^\sigma)^{-1}$.
\end{enumerate}

We construct the said system by induction on $\beta<\kappa^+$. For $\beta\leq \delta+1$ take simply $p_\beta$ be the inclusion map. Suppose that $\langle p_{\beta'}\mid \beta'<\beta\rangle$ has been defined. If $\beta$ happens to be a limit ordinal we take $p_\beta$ the direct limit of the previous $p_{\beta'}$'s. Otherwise, $\beta$ takes the form $\mu+1$ and we have to do something clever. We now distinguish three cases:

  \smallskip

  \underline{\textbf{Case $\mu\notin S$ and $\mu\notin \bigcup_{\eta\in S}\mathrm{Im}(c_\eta)$:}} In this case we let $p_{\mu+1}:=p_{\mu}\oplus 0.$

  \medskip

    \underline{\textbf{Case $\mu\notin S$ and $\mu\in \bigcup_{\eta\in S}\mathrm{Im}(c_\eta)$:}} In this case $M_{\mu+1}$ decomposes as
    $$M_\mu \oplus M'_\mu = M_{\mu}\oplus (\bigoplus_{\nu\neq \sigma+1}N_{\mu, \nu})\oplus N^*_{\mu,\sigma}\oplus N'_{\mu,\sigma}$$
    where $\sigma$ is such that $\mu=c_\eta(\sigma)$ for some $\eta\in S$. Consider the morphism $q_\mu=\iota_\eta\circ\pi_{\tau_\eta}\circ(s_\beta^\sigma)^{-1}$ defined on $N'_{\mu,\sigma}$ whose image, by Lemma \ref{l:Technical}, is contained in $M_{\delta+1}$. Thus, $p_{\mu+1}:=(p_\mu\oplus 0\oplus 0 \oplus q_\mu)$ defines a homomorphism between $M_{\mu+1}$ and $M_{\delta+1}$ which trivially satisfies (P2) above.

    We are left to show that this homomorphism does not depend upon the choice of $\eta\in S$ -- here is where the tree-likeness of $\langle c_\eta\mid \eta\in S\rangle$ will come into play. Suppose that $c_\eta(\sigma)=\mu=c_\xi(\rho)$. By tree-likeness, $\rho=\sigma$ (in particular, $s^\sigma_\mu=s^\xi_\mu$) and $$c_\eta\restriction \sigma+1=c_\xi\restriction\rho+1.$$  Now, since $c_\eta(\sigma)=c_\xi(\rho)>\delta$ it follows that $\tau_\eta=\tau_\rho$ and thus $\pi_{\tau_{\eta}}=\pi_{\tau_\rho}$. Finally, observe that $\iota_\eta\restriction N_{\tau_\eta}=\iota_\xi\restriction N_{\tau_\xi}$ as both homomorphism were constructed using the same $s^\sigma_\beta$'s in that $c_\eta\restriction \sigma=c_\xi\restriction\sigma$.

\medskip

     \underline{\textbf{Case $\mu\in S$:}} %In this case $M_{\delta+1}$ arises from the pushout  construction. 
  If we consider the morphism $\theta_\mu \pi_\mu$ from $L$ to $M_{\mu+1}$, whose image is inside $M_{\delta+1}$ by Lemma \ref{l:Technical} we get, by (P2), the following commutative diagram
  \begin{displaymath}
\begin{tikzcd}
 N \arrow{r}{i} \arrow{d}{\iota_\mu} & L \arrow{d}{\theta_\mu\pi_{\tau_\mu}} \\
 M_\mu \arrow{r}{p_\mu} & M_{\delta+1}.
\end{tikzcd}
\end{displaymath}
Since $M_{\mu+1}$ is the pushout of $\iota_\mu$ and $i$, the universality of this construction yields a homomorphism $p_{\mu+1}\colon M_{\mu+1}\rightarrow M_{\delta+1}$ such that $p_{\mu+1}\circ\ i_{\mu}=p_\mu$ and $p_{\mu+1}\theta_\mu=\theta_\mu\pi_{\tau_\mu}.$
Clearly, $p_{\mu+1}$ is as desired.
 \end{proof}

The next step is to prove that $M$ is not trivial, equivalently, by Proposition \ref{p:Strongly free are trivial} and Lemma \ref{lemma: separable implies strongly free}, that $\{\alpha<\kappa^+\mid M_{\alpha}\nleq_{\oplus} M_{\alpha+1}\}$ is stationary. We need the following result about $\iota_\mu$.

\begin{lemma}\label{l:i_mu}
    The morphism $\iota_\mu$ is a split monomorphism for every $\mu \in S$.
\end{lemma}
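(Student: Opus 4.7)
The plan is to construct a retraction $r_\mu \colon M_\mu \to N$ splitting $\iota_\mu$ by transfinite induction on the filtration $\langle M_\beta : \beta \leq \mu\rangle$. For each $\beta \leq \mu$ set $\tau_\beta := \sup\{\sigma+1 : c_\mu(\sigma) < \beta\}$, so that $\tau_\mu = \lambda$ and $N_{\tau_\mu} = N$. I will build a coherent sequence of partial retractions $r_\beta \colon M_\beta \to N_{\tau_\beta}$ (meaning $r_{\beta'} \rest M_\beta = r_\beta$ for $\beta < \beta'$), and the desired $r_\mu$ will arise at the limit step $\beta = \mu$.

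At a successor stage $\beta = \beta'+1$ there are three cases. If $\beta' \notin S$ and $\beta' \notin \Img(c_\mu)$, set $r_\beta := r_{\beta'} \oplus 0$ on $M_\beta = M_{\beta'} \oplus M'_{\beta'}$. If $\beta' = c_\mu(\sigma)$ for some (unique) $\sigma < \lambda$, use the decompositions $M'_{\beta'} = \bigoplus_{\nu<\lambda} N_{\beta',\nu}$ and $N_{\beta',\sigma+1} = N^*_{\beta',\sigma} \oplus N'_{\beta',\sigma}$ to extend $r_{\beta'}$ by sending $N'_{\beta',\sigma}$ to $N'_\sigma \subseteq N_{\sigma+1} = N_{\tau_\beta}$ via $(s^\sigma_{\beta'})^{-1}$ and annihilating the other summands. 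This clause is precisely what ensures $r_\mu \circ \iota_\mu = 1_N$: for $n \in N'_\sigma$ the image $\iota_\mu(n) = s^\sigma_{c_\mu(\sigma)}(n) \in N'_{c_\mu(\sigma),\sigma}$ gets sent back to $n$ at stage $c_\mu(\sigma)+1$. At limit stages one simply takes unions, which is consistent provided $\tau_\beta = \sup_{\beta'<\beta}\tau_{\beta'}$ as is immediate from the definition.

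The delicate case is $\beta' \in S$ with $\beta' < \mu$, where $M_\beta$ is the pushout of $\iota_{\beta'} \colon N \to M_{\beta'}$ and $i \colon N \to L$. By the pushout universal property, extending $r_{\beta'}$ is equivalent to producing a homomorphism $g \colon L \to N_{\tau_{\beta'}}$ with $g \circ i = r_{\beta'} \circ \iota_{\beta'}$. Tree-likeness of the ladder system is decisive here: since $\beta' \neq \mu$, the set $\{\rho : c_{\beta'}(\rho) = c_\mu(\rho)\}$ is a proper initial segment $[0,\rho_0)$ of $\lambda$, and $c_{\beta'}(\rho) \notin \Img(c_\mu)$ for $\rho \geq \rho_0$. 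A direct computation then identifies $r_{\beta'} \circ \iota_{\beta'}$ with the canonical projection $N \to N_{\rho_0}$ coming from the direct sum decomposition $N = N_{\rho_0} \oplus \bigoplus_{\rho \geq \rho_0} N'_\rho$. I expect this extension step to be the main obstacle: the template supplies $N_{\rho_0} \leq_{\oplus} L$ but forbids $N \leq_{\oplus} L$, so the naive projection $L \to N_{\rho_0}$ obtained from any complement will generally fail to match $r_{\beta'}\circ\iota_{\beta'}$ on the piece $\bigoplus_{\rho\geq\rho_0} N'_\rho$. Overcoming this will require coordinating the family of complements of $N_{\rho_0+k}$ in $L$ (for $k < \lambda$) using the tree-like structure of the ladder to construct a valid $g$. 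Once $g$ (and hence $r_\mu$) is built, the retraction identity $r_\mu \circ \iota_\mu = 1_N$ is then immediate from the second clause of the successor construction.
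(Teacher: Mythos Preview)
Your proposal has a genuine gap at exactly the point you flag as ``the main obstacle'': the pushout case $\beta'\in S$. You correctly compute that $r_{\beta'}\circ\iota_{\beta'}$ is the projection $q_{\rho_0}\colon N\to N_{\rho_0}$ along $\bigoplus_{\rho\geq\rho_0}N'_\rho$, and you correctly observe that an arbitrary retraction $\pi_{\rho_0}\colon L\to N_{\rho_0}$ need not restrict to $q_{\rho_0}$ on $N$. The problem is that producing the required $g\colon L\to N$ with $g\restriction N=q_{\rho_0}$ is an honest extension problem governed by $\Ext^1_R(L/N,N_{\rho_0})$, and nothing in the template hypotheses forces this to vanish (indeed $N\nleq_\oplus L$ is precisely the obstruction to extending arbitrary maps out of $N$). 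Your suggestion to ``coordinate the family of complements of $N_{\rho_0+k}$'' is not an argument; it is a hope, and I do not see how to make it work without changing the prescription at \emph{earlier} successor stages --- essentially rebuilding the construction around fixed idempotents $\pi_\nu$ from the start, as the paper does in the proof of Lemma~\ref{LemmaDirectSummads} via condition~(P2). But you do not do this, and the naive prescription you give (send $N'_{\beta',\sigma}$ isomorphically to $N'_\sigma$, annihilate the rest) is incompatible with the pushout step.

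The paper's proof is entirely different and avoids the extension problem altogether. Rather than building a retraction by induction, it observes directly that $\Img(\iota_\mu)=\bigoplus_{\nu<\lambda}N'_{c_\mu(\nu),\nu}$ is a direct summand of $\bigoplus_{\nu<\lambda}M'_{c_\mu(\nu)}$, and then shows the latter is a direct summand of $M_\mu$. For this last step it manufactures a club $C\subseteq\mu$ disjoint from $S$ (built from $\Img(c_\mu)$, its successors, and the suprema of its initial segments), reindexes the filtration along $C$, and invokes Lemma~\ref{LemmaDirectSummads} together with Fact~\ref{fact: when M is a direct sum} to split off each $M'_{c_\mu(\nu)}$. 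The moral: use the already--proved global splitting (Lemma~\ref{LemmaDirectSummads}) as a black box rather than redoing a parallel induction with a smaller target module $N$.
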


 \begin{proof}
    First, observe that $$\textstyle\mathrm{Im}(\iota_{\mu})=\bigoplus_{\nu<\lambda} N'_{c_{\mu}(\nu),\nu}$$ and that $N'_{c_{\mu}(\nu),\nu}\leq_{\oplus} N_{c_{\mu}(\nu),\nu+1}\leq_{\oplus} M'_{c_{\mu}(\nu)}.$ Thus, $$\textstyle\mathrm{Im}(\iota_{\mu})\leq_{\oplus}\bigoplus_{\nu<\lambda} M'_{c_{\mu}(\nu)}.$$

     Let us show that $\bigoplus_{\nu<\lambda} M'_{c_{\mu}(\nu)}$ is a direct summand of $M_{\mu}$. To show this let 
     $$C:=\mathrm{Im}(c_{\mu})\cup\{\beta+1\mid \beta\in \mathrm{Im}(c_{\mu})\}\cup \{\sup\{c_\mu(\nu)\mid \nu < \sigma\} \mid \sigma < \lambda \text{ limit}\}.$$

     \begin{claim}
         $C$ is a club in $\mu$ with $C \cap S = \emptyset$.
     \end{claim}     

     \begin{proof}[Proof of claim]
         Clearly, $C \cap S=\emptyset$ because if $\alpha \in C$ is successor, then $\alpha \notin S$ by Remark \ref{RemarkOnLadders}, and if $\alpha$ is limit, then its cofinality is smaller than $\lambda$, and the ordinals in $S$ have cofinality equal to $\lambda$.

         Since $C$ is unbounded in $\mu$, it remains to see that it is closed. Let $A \subseteq C$ be a subset of $C$ with $\sup A < \mu$. If $A$ has a cofinal subset $A'$ consisting of ordinals belonging to $\mathrm{Im}(c_{\mu})\cup\{\beta+1\mid \beta\in \mathrm{Im}(c_{\mu})\}$, then $\sup A = \sup A' \in \{\sup\{c_\mu(\nu)\mid \nu < \sigma\} \mid \sigma < \lambda \text{ limit}\} \subseteq C$. Otherwise, $A$ has a cofinal subset $A'$ consisting of ordinals belonging to $\{\sup\{c_\mu(\nu)\mid \nu < \sigma\} \mid \sigma < \lambda \text{ limit}\}$. Take $\beta < \lambda$ such that $A'=\{\alpha_\gamma \mid \gamma < \beta\}$ and, for each $\gamma < \beta$, take $\sigma_\gamma < \lambda$ satisfying

         $$\alpha_\gamma = \sup \{c_\mu(\nu) \mid \nu < \sigma_\gamma\}.$$

         Now let $\sigma=\sup \{\sigma_\gamma \mid \gamma < \beta\}$ and notice that, since $\sup A'<\mu$, $\sigma < \lambda$. Then it is easy to see that

         $$\sup A = \sup A' = \sup \{c_\mu(\nu)\mid \nu < \sigma\}$$

         and, consequently, $\sup A \in C$ also. This finishes the proof of the claim.
     \end{proof}
     
     Let $C:\lambda \rightarrow C$ be a continuous strictly increasing map and define de filtration $\{Q_\sigma \mid \sigma < \lambda\}$ of $M_\mu$ as declaring $Q_\sigma = M_{C(\sigma)}$. Since $C \cap S\neq \emptyset$, $Q_\sigma$ is a direct summand of $M_\mu$ for every $\sigma < \lambda$ by Lemma \ref{LemmaDirectSummads}. Now, for $\sigma < \lambda$, notice the following:
     \begin{itemize}
         \item If $C(\sigma)=c_\mu(\nu)$ for some $\nu < \lambda$, then 
         \begin{displaymath}
             Q_{\sigma+1}=M_{c_\mu(\nu)+1}=Q_\sigma \oplus M'_{c_\mu(\nu)}.
         \end{displaymath}

         \item If $C(\sigma)\neq c_\mu(\nu)$ for every $\nu < \lambda$, there exists a submodule $B_\sigma$ of $M$ such that $Q_{\sigma+1}=Q_\sigma \oplus B_\sigma$.
     \end{itemize}
     By Fact \ref{fact: when M is a direct sum},
     \begin{displaymath}
         M=\left(\bigoplus_{\sigma < \lambda}M'_{c_\mu(\sigma)}\right)\bigoplus\left(\bigoplus_{\substack{\sigma < \lambda\\C(\sigma)\notin \Img(c_\mu)}}B_\sigma\right),
     \end{displaymath}
  which concludes the proof.
 \end{proof}
 
\begin{lemma}
$S\s \{\alpha<\kappa^+\mid M_{\alpha}\nleq_{\oplus} M_{\alpha+1}\}$.
\end{lemma}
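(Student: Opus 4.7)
The plan is to argue by contradiction, exploiting the defining property of the template together with Lemma~\ref{l:i_mu}. Fix $\mu \in S$ and assume towards a contradiction that $M_\mu \leq_\oplus M_{\mu+1}$, i.e., that the canonical map $i_\mu\colon M_\mu \to M_{\mu+1}$ from the pushout square~\eqref{e:Pushout} is split by some retraction $r\colon M_{\mu+1} \to M_\mu$. The aim is to manufacture, from this retraction, a splitting of the inclusion $i\colon N \to L$, thereby contradicting the template condition $N \nleq_\oplus L$.

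The construction is straightforward once the right arrows are assembled. By Lemma~\ref{l:i_mu}, the morphism $\iota_\mu\colon N \to M_\mu$ is a split monomorphism, so there exists $\rho\colon M_\mu \to N$ with $\rho \circ \iota_\mu = 1_N$. Consider the composition
\begin{equation*}
    \rho \circ r \circ \theta_\mu \colon L \longrightarrow N.
\end{equation*}
Using the commutativity of the pushout square~\eqref{e:Pushout}, which gives $\theta_\mu \circ i = i_\mu \circ \iota_\mu$, together with $r \circ i_\mu = 1_{M_\mu}$ and $\rho \circ \iota_\mu = 1_N$, we compute
\begin{equation*}
    \rho \circ r \circ \theta_\mu \circ i \;=\; \rho \circ r \circ i_\mu \circ \iota_\mu \;=\; \rho \circ \iota_\mu \;=\; 1_N.
\end{equation*}
Hence $\rho \circ r \circ \theta_\mu$ is a left inverse to $i$, so $N$ is a direct summand of $L$, contradicting the very definition of a $(\lambda,\Add(X))$-template (Definition~\ref{template}).

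I do not expect any serious obstacle here: all the real work has already been done, first in building the direct system that produces the split monomorphism $\iota_\mu$ (Lemma~\ref{l:i_mu}), and second in arranging the pushout so that $M_{\mu+1}$ inherits both $M_\mu$ and $L$ through compatible arrows. The present lemma is essentially the reason one packages the template with the clause $N \nleq_\oplus L$ at all: splittings at the stage $\mu \in S$ would force splittings in the template, which is forbidden. The conclusion $S \subseteq \{\alpha < \kappa^+ \mid M_\alpha \nleq_\oplus M_{\alpha+1}\}$ follows immediately since $\mu \in S$ was arbitrary.
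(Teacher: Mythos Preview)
Your proof is correct and follows essentially the same approach as the paper: both argue by contradiction, use the pushout identity $\theta_\mu \circ i = i_\mu \circ \iota_\mu$ together with Lemma~\ref{l:i_mu}, and conclude that $i$ splits, contradicting the template condition. The only difference is that you write out the retraction $\rho \circ r \circ \theta_\mu$ explicitly, whereas the paper invokes the abstract fact that a left factor of a split monomorphism is itself a split monomorphism.
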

\begin{proof}
	Towards a contradiction suppose that $M_{\alpha_*}\leq_{\oplus} M_{\alpha_*+1}$ for some ${\alpha_*}\in S$. Then $i_{\alpha_*}$ is a split monomorphism so that $\theta_{\alpha_*}i=i_{\alpha_*}\iota_{\alpha_*}$ is a split monomorphism by the previous lemma. Then $i$ is a split monomorphism as well, which is a contradiction since $\langle N,L,\mathcal N\rangle$ is a template.
\end{proof}

The last part of the proof is the fact that every module in the filtration belongs to $\Add(X)$. Here, we use that $S$ does not reflect.

\begin{lemma}
	$M_{\alpha}\in \mathrm{Add}(X)$ for all $\alpha\notin S$. 

 In particular, $M_{\alpha+1}\in\Add(X)$ for all $\alpha<\kappa^+$.
\end{lemma}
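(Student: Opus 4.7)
The plan is to prove by transfinite induction on $\alpha<\kappa^+$ that $M_\alpha\in \Add(X)$ for all $\alpha\notin S$; the base case $\alpha=0$ is trivial.

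For the \emph{successor step} $\alpha=\alpha_*+1\notin S$ I distinguish two subcases. If $\alpha_*\notin S$, the inductive hypothesis gives $M_{\alpha_*}\in\Add(X)$, and by construction $M_\alpha=M_{\alpha_*}\oplus\bigoplus_{\nu<\lambda}N_{\alpha_*,\nu}$, where each $N_{\alpha_*,\nu}\cong N_\nu$ is a direct summand of $L\in\Add(X)$; hence $M_\alpha\in\Add(X)$. If instead $\alpha_*\in S$, Lemma~\ref{l:i_mu} ensures that $\iota_{\alpha_*}$ is split monic, so the pushout \eqref{e:Pushout} identifies $M_{\alpha_*+1}$ with $L\oplus P$, where $P$ is any direct complement of $\iota_{\alpha_*}(N)$ in $M_{\alpha_*}$. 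Reading off the decomposition produced inside the proof of Lemma~\ref{l:i_mu} one has
\[
M_{\alpha_*}=\Big(\bigoplus_{\nu<\lambda}M'_{c_{\alpha_*}(\nu)}\Big)\oplus\bigoplus_{\sigma}B_\sigma, \qquad \iota_{\alpha_*}(N)\leq_\oplus\bigoplus_{\nu<\lambda}M'_{c_{\alpha_*}(\nu)},
\]
where each $B_\sigma$ is a direct summand of $M_{C(\sigma+1)}$ for the club $C\subseteq\alpha_*$ built there, which satisfies $C\cap S=\emptyset$. The inductive hypothesis thus places each $M_{C(\sigma+1)}$ in $\Add(X)$, and Walker's lemma forces $B_\sigma\in\Add(X)$; since every $M'_{c_{\alpha_*}(\nu)}$ is already in $\Add(X)$ (as $c_{\alpha_*}(\nu)\notin S$ by Remark~\ref{RemarkOnLadders}), the complement $P$ (a direct summand of $\bigoplus_\nu M'_{c_{\alpha_*}(\nu)}$ combined with $\bigoplus_\sigma B_\sigma$) lies in $\Add(X)$. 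Therefore $M_\alpha\cong L\oplus P\in\Add(X)$.

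For the \emph{limit step} $\alpha\notin S$ I exploit the non-reflection of $S$: if $\cf(\alpha)>\omega$, I pick a club $C\subseteq\alpha$ disjoint from $S$ with $0\in C$; if $\cf(\alpha)=\omega$, I pick any cofinal $\omega$-sequence of successor ordinals, which automatically avoids $S$. Enumerate $C$ increasingly as $(\gamma_i)_{i<\cf(\alpha)}$. By the inductive hypothesis each $M_{\gamma_i}$ belongs to $\Add(X)$, and Lemma~\ref{LemmaDirectSummads} gives $M_{\gamma_i}\leq_\oplus M$; composing a retraction $M\twoheadrightarrow M_{\gamma_i}$ with $M_{\gamma_{i+1}}\hookrightarrow M$ yields $M_{\gamma_i}\leq_\oplus M_{\gamma_{i+1}}$, so $M_{\gamma_{i+1}}=M_{\gamma_i}\oplus P_i$ with $P_i\in\Add(X)$ by Walker. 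Continuity of the original filtration lifts to continuity of $(M_{\gamma_i})_i$, so Fact~\ref{fact: when M is a direct sum} (or rather its routine extension to continuous chains of possibly large modules) delivers $M_\alpha=\bigoplus_i P_i\in\Add(X)$.

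The principal obstacle is the successor case with $\alpha_*\in S$: one cannot appeal to the inductive hypothesis on $M_{\alpha_*}$ itself -- it typically fails to be in $\Add(X)$, as the non-triviality of $M$ is witnessed precisely along $S$ -- so one must unwind the pushout via Lemma~\ref{l:i_mu} and exploit its explicit decomposition of $M_{\alpha_*}$ along a club in $\alpha_*$ avoiding $S$, transferring the inductive hypothesis to the values $M_{C(\sigma+1)}$. Non-reflection of $S$ plays a dual role: it supplies the $S$-avoiding club inside $\alpha_*$ used by Lemma~\ref{l:i_mu}, and it supplies the analogous club inside $\alpha$ needed in the limit step.
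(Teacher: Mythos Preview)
Your proof is correct and follows the same inductive strategy as the paper. The limit step is handled identically (split into $\cf(\alpha)=\omega$ versus $\cf(\alpha)>\omega$, invoking non-reflection in the latter case and Lemma~\ref{LemmaDirectSummads} together with Fact~\ref{fact: when M is a direct sum} in both). In the successor step with $\alpha_*\in S$ the paper simply asserts $\Coker(\iota_{\alpha_*})\in\Add(X)$ ``by induction hypothesis''; you are supplying the missing justification by going back into the explicit decomposition of $M_{\alpha_*}$ produced in the proof of Lemma~\ref{l:i_mu} and observing that every summand there is either an $M'_{c_{\alpha_*}(\nu)}$ (hence in $\Add(X)$ outright) or a $B_\sigma\leq_\oplus M_{C(\sigma+1)}$ with $C(\sigma+1)\notin S$, so the induction hypothesis applies. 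That is exactly what the paper's terse phrase is hiding, so the two arguments coincide. (Minor remark: you do not need Walker to get $B_\sigma\in\Add(X)$; closure of $\Add(X)$ under direct summands suffices.)
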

\begin{proof}
Let $\alpha\notin S$ and suppose by induction that $M_\beta\in\Add(X)$ for all $\beta<\alpha$ not in $S$. If $\alpha=\beta_*+1$ then we distinguish two cases: either $\beta_*\notin S$ or $\beta_*\in S$. In the former case, $M_\alpha:=M_{\beta_*}\oplus \bigoplus_{\nu<\lambda} N_{\beta_*,\nu}$, a direct sum of members of $\Add(X)$ and thus a member of $\Add(X).$ Alternatively, $\beta_*\in S$ and $M_\alpha$ is the outcome of the pushout diagram
  \begin{displaymath}
\begin{tikzcd}
N \arrow{r}{i} \arrow{d}{\iota_{\beta_*}} & L \arrow{d}{\theta_{\beta_*}} \\
 M_{\beta_*} \arrow{r}{i_{\beta_*}} & M_{\alpha}.
\end{tikzcd}
\end{displaymath}
By Lemma~\ref{l:i_mu}, $\iota_{\beta_*}$ is a split monomorphism hence so is $\theta_{\beta_*}$. Using that $M_{\alpha}\cong \Coker(\theta_{\beta_*})\oplus L$ and that $\Coker(\theta_{\beta_*})=\Coker(\iota_{\beta_*}) \in \Add(X)$ by induction hypothesis, we get that $M_\alpha$ belongs to $\Add(X)$ as well.

Finally, suppose that $\alpha$ is a limit ordinal. If $\cf(\alpha)=\omega$ we fix an increasing cofinal sequence $\langle \beta_n+1\mid n<\omega\rangle$ converging to $\alpha$. Since $M_{\beta_{n+1}+1}=M_{\beta_n+1}\oplus M'_{\beta_{n+1}+1}$ (see Lemma~\ref{LemmaDirectSummads}) Fact~\ref{fact: when M is a direct sum} yields $M_\alpha=\bigoplus_{n<\omega} M_{\beta_{n+1}+1}'$ and as a result $M_\alpha\in \Add(X).$ In case $\cf(\alpha)\geq \omega_1$  we use the fact that $S\cap \alpha$ is non-stationary to find a club $C\s \alpha$ avoiding $S$ and argue as before with $\langle M_\beta\mid \beta\in C\rangle$.
%	Suppose by induction  that $M_{\beta+1}\in \mathrm{Add}(X)$ for all $\beta<\alpha$. First, we assume that $\alpha$ is of the form $\alpha_*+1$ for some ordinal  $\alpha_*\notin S$. In that case, by definition, $$\textstyle M_\alpha:=M_{\alpha_*}\oplus (\bigoplus_{\nu<\lambda} N_{\alpha_*,\nu}).$$
	%If $\alpha_*$ is a successor ordinal then the induction hypothesis combined with the fact that $N_{\alpha_*,\nu}\simeq N_\nu\in \mathrm{Add}(X)$ yields $M_{\alpha}\in\mathrm{Add}(X)$. If $\alpha_*$ is a limit ordinal then standard arguments yield $M_{\alpha_*}=\bigoplus_{\beta<\alpha_*}(\bigoplus_{\nu<\lambda} N_{\beta,\nu})\in\mathrm{Add}(X)$. So we are left with discussing the case where $\alpha=\alpha_*+1$ and $\alpha_*\in S$. Since $\iota_{\alpha_*}\colon N\rightarrow M_{\alpha_*}$ is a split monomorphism then so is $\theta_\alpha\colon L\rightarrow M_{\alpha_*+1}$ and hence $M_{\alpha_*+1}=L\oplus \mathrm{Ker}(\iota'_{\alpha_*})$ where $\iota'_{\alpha_*}\colon M_{\alpha_*}\rightarrow N$ witnesses $\iota'_{\alpha_*}\circ \iota_{\alpha_*}=\mathrm{id}_{N}$. Since $M_{\alpha_*}\in\mathrm{Add}(X)$ and $\mathrm{Ker}(\iota'_{\alpha_*})\leq_{\oplus} M_{\alpha_*}$ it follows that $\mathrm{Ker}(\iota'_{\alpha_*})\in\mathrm{Add}(X)$. Since $L\in\mathrm{Add}(X)$ it follows that $M_{\alpha_*+1}\in\Add(X)$, as needed.
	\end{proof}

 \begin{lemma}
	$M_\alpha$ is ${\leq}\kappa$-generated for all $\alpha<\kappa^+$. Actually:
 \begin{enumerate}
     \item If $\alpha < \max\{\lambda,\lambda_X\}$, then $M_\alpha$ is $\leq \max\{\lambda,\lambda_X\}$-generated.

     \item If $\max\{\lambda,\lambda_X\} \leq \alpha < \kappa^+$, then $M_\alpha$ is $|\alpha|$-generated.
 \end{enumerate}
\end{lemma}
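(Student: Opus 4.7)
The plan is to proceed by transfinite induction on $\alpha<\kappa^+$, writing $\mu:=\max\{\lambda,\lambda_X\}$ throughout. Before beginning the induction, I would perform a harmless reduction to ensure $L$ is $\leq\mu$-generated: since $N$ is $\lambda$-generated (as the union of a $\lambda$-filtration by $<\lambda$-generated pieces) and $L\in\Add(X)$, Walker's lemma allows us to write $L=\bigoplus_{i\in I}L_i$ with each $L_i$ being $\lambda_X$-generated. Picking a subfamily of size $\leq\mu$ whose direct sum $L'$ already contains $N$ and replacing $L$ with $L'$ yields a template in which $L$ is $\leq\mu$-generated; this replacement preserves both $L\in\Add(X)$ and the direct summand condition $N_\alpha\leq_\oplus L$ for every $\alpha<\lambda$. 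Consequently every isomorphic copy $N_{\gamma,\nu}$ appearing in the construction is also $\leq\mu$-generated.

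With $L$ and the $N_{\gamma,\nu}$'s so controlled, the induction is essentially cardinal bookkeeping. The base $M_0=0$ is immediate. At a successor step $\alpha=\beta+1$ with $\beta\notin S$, the formula $M_{\beta+1}=M_\beta\oplus\bigoplus_{\nu<\lambda}N_{\beta,\nu}$ contributes at most $\lambda\cdot\mu=\mu$ additional generators beyond those of $M_\beta$. At a successor step $\alpha=\beta+1$ with $\beta\in S$, the pushout \eqref{e:Pushout} exhibits $M_{\beta+1}$ as a quotient of $M_\beta\oplus L$, and so again contributes at most $\mu$ new generators. In both subcases the inductive hypothesis on $M_\beta$ yields the claim: when $\beta<\mu$ the module $M_{\beta+1}$ is $\leq\mu$-generated, while when $\beta\geq\mu$ the identity $|\beta+1|=|\beta|$ absorbs the $\mu$ extra generators.

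For a limit ordinal $\alpha<\kappa^+$, $M_\alpha=\bigcup_{\beta<\alpha}M_\beta$; the inductive hypothesis provides a generating set of size at most $\max\{\mu,|\beta|\}$ for each $M_\beta$, whence $M_\alpha$ is generated by $|\alpha|\cdot\max\{\mu,|\alpha|\}=\max\{\mu,|\alpha|\}$ elements. Splitting into the regimes $\alpha<\mu$ and $\alpha\geq\mu$ delivers assertions (1) and (2) respectively. I do not foresee any substantial obstacle; the only step that warrants thought is the preliminary reduction on the size of $L$, since Definition~\ref{template} does not explicitly bound $|L|$ a priori and without such a bound the pushout case would not produce a module within the stated cardinality.
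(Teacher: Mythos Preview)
Your proposal is correct and follows essentially the same inductive skeleton as the paper's proof, which simply records the three recursive cases ($\alpha\notin S$ successor, $\alpha\in S$ successor via $M_{\alpha+1}\cong\Coker(\iota_\alpha)\oplus L$, and limit) and invokes $\lambda,\lambda_X<\kappa$. Your preliminary Walker-type reduction bounding the size of $L$ is a genuine point the paper leaves implicit; it is needed for the argument to go through, and your execution of it is sound once one reads ``continuous filtration $\langle N_\alpha\mid\alpha<\lambda\rangle$'' in Definition~\ref{template} as a $\lambda$-filtration in the sense of Definition~\ref{def: filtration} (so that each $N_\alpha$ is ${<}\lambda$-generated and hence $N$ is $\lambda$-generated).
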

\begin{proof}
Follows by induction on $\beta < \kappa^+$ using that $\lambda, \lambda_X < \kappa$ and that:
\begin{itemize}
    \item $M_\beta = M_\alpha \oplus \left(\bigoplus_{\nu < \lambda}N_\nu\right)$ if $\beta=\alpha+1$ for $\alpha+1 \notin S$;

    \item $M_\beta \cong \Coker (\iota_\alpha) \oplus L$ if $\beta = \alpha+1$ for some $\alpha \in S$, and

    \item $M_\beta = \bigcup_{\alpha < \kappa}M_\alpha$ if $\beta$ is limit.\qedhere
\end{itemize}
 \end{proof}

 \smallskip

Combining the above with Proposition \ref{p:Strongly free are trivial} and Lemma \ref{lemma: separable implies strongly free} we conclude that $M:=\bigcup_{\alpha<\kappa^+} M_\alpha$ is a $\kappa^+$-generated $(\kappa^+,\Add(X))$-free and $(\kappa^+,\mathrm{Add}(X))$-separable module that is non-trivial, as sought.

\smallskip

A natural inquiry is whether the assumption in Theorem~\ref{MainTheorem} is necessary:
\begin{question}
    Suppose that $\kappa$ is cardinal and that there is a $\kappa^+$-generated non-trivial $(\kappa^+,\Add(X))$-separable module. Must $(\star)_{\kappa,\lambda}$ hold for some $\lambda=\cf(\lambda)<\kappa$? 
\end{question}

Related to this question, we can prove the following.

\begin{proposition}
 Let $\kappa$ be an infinite regular cardinal and $X$, a module with $\lambda_X < \kappa$. Suppose that there is a non-trivial, $\kappa$-generated $(\kappa,\Add_\kappa(X))$-free and strongly $(\kappa,\Add_\kappa(X))$-free module that admits a $\kappa$-filtration $\langle M_\alpha \mid \alpha < \kappa\rangle$ satisfying, for every $\alpha < \kappa$:
 \begin{enumerate}
     \item $M_\alpha \in \Add(X)$,

     \item $M_{\alpha + 1}$ is a direct summand of $M_\beta$ for every $\beta > \alpha+1$, 

     \item $M_\alpha$ is $\leq \lambda_X$-generated if $\alpha< \lambda_X$ and $\leq |\alpha|$-generated if $\alpha \geq \lambda_X$.
 \end{enumerate}
 Then there exists a stationary set $S \subseteq \kappa$ such that, for any regular cardinal $\mu$ with $\lambda_X < \mu < \kappa$, $S$ does not reflect at $\mu$.
\end{proposition}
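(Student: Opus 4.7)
I plan to let $S := \{\alpha < \kappa \mid M_\alpha \nleq_\oplus M_{\alpha+1}\}$ and prove that (a) $S$ is stationary in $\kappa$, and (b) $S\cap\mu$ is non-stationary in $\mu$ for every regular $\mu$ with $\lambda_X<\mu<\kappa$. By the paper's definition of reflection, (b) is exactly the assertion that $S$ does not reflect at $\mu$.

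For (a), I would apply Proposition~\ref{p:Strongly free are trivial} directly to $M$ with its given filtration. Its hypotheses are in place: by (1) and (3), every $M_{\alpha+1}$ is a $<\kappa$-generated member of $\Add(X)$, hence lies in $\Add_\kappa(X)$; and (2), together with the trivial case $\gamma=\alpha+1$, gives $M_{\alpha+1}\leq_\oplus M_\gamma$ for all $\alpha<\gamma<\kappa$. The non-triviality of $M$ then forces $S$ to be stationary in $\kappa$.

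For (b), fix regular $\mu$ with $\lambda_X<\mu<\kappa$ and consider the restricted chain $\langle M_\beta\mid\beta<\mu\rangle$ with union $M_\mu$. By (3) and the regularity of $\mu$ (so that $|\beta|<\mu$ for $\beta<\mu$), each $M_\beta$ is $<\mu$-generated; together with (1) this places every $M_\beta$ in $\Add_\mu(X)$, while (2) transfers verbatim, so the chain is a $\mu$-filtration of $M_\mu$ meeting the hypothesis of Proposition~\ref{p:Strongly free are trivial} at level $\mu$. By (1), $M_\mu\in\Add(X)$ is trivial, and by (3) it is $\leq\mu$-generated. If $M_\mu$ is $<\mu$-generated then the filtration stabilizes by regularity of $\mu$ (all generators of $M_\mu$ already appear in some $M_{\beta_0}$ with $\beta_0<\mu$), so $S\cap\mu\subseteq\beta_0$ is bounded and hence non-stationary. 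Otherwise $M_\mu$ is $\mu$-generated; Lemma~\ref{l:StrongFree} then makes it strongly $(\mu,\Add(X))$-free, and the implication (1)$\Rightarrow$(3) of Proposition~\ref{p:Strongly free are trivial} applied to $M_\mu$ and the inherited filtration yields that $S\cap\mu=\{\beta<\mu\mid M_\beta\nleq_\oplus M_{\beta+1}\}$ is non-stationary in $\mu$.

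The bulk of the work is essentially bookkeeping: the main subtlety is verifying that each hypothesis of Proposition~\ref{p:Strongly free are trivial} descends cleanly from the ambient level $\kappa$ to the sub-level $\mu$. This reduces to condition (3), which supplies the required $<\mu$-generation below $\mu$, and to the regularity of $\mu$, which disposes of the corner case where $M_\mu$ happens to be already $<\mu$-generated. No extra set-theoretic assumption is invoked.
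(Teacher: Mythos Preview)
Your proposal is correct and follows essentially the same approach as the paper: define $S=\{\alpha<\kappa\mid M_\alpha\nleq_\oplus M_{\alpha+1}\}$, use non-triviality at level $\kappa$ to get $S$ stationary, and use triviality of $M_\mu$ at each regular $\mu\in(\lambda_X,\kappa)$ to get $S\cap\mu$ non-stationary. Your version is in fact slightly more careful than the paper's: you cite Proposition~\ref{p:Strongly free are trivial} (whose set $E''$ is exactly your $S$) rather than Lemma~\ref{WhenMistrivial2}, and you explicitly dispose of the corner case where $M_\mu$ is $<\mu$-generated, which the paper's proof silently ignores.
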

\begin{proof}
Since $M\notin \Add(X)$,  the set $$S=\{\alpha<\kappa\mid M_\alpha\nleq_{\oplus} M_{\alpha+1}\}$$ %consisting of all ordinals $\alpha < \kappa$ such that $M_\alpha$ is not a direct summand of $M_{\alpha+1}$ 
is, by virtue of Lemma~\ref{WhenMistrivial2}, stationary in $\kappa$. Given any regular cardinal $\mu$ with $\lambda_X < \mu < \kappa$ notice that $S\cap \mu$ is non-stationary. Indeed, $\langle M_\alpha \mid \alpha < \mu\rangle$ serves as  $\mu$-filtration  of $M_\mu$ consisting of modules belonging to $\Add_\mu(X)$, which is (by construction) a $\mu$-generated  trivial $(\mu, \mathrm{Add}(X))$-free module. Thus, by Lemma~\ref{WhenMistrivial2}, $S\cap \mu\in \mathrm{NS}_\mu$. %the set $E \cap \mu$ is not stationary by Proposition \ref{p:WeaklyInAddX}.
\end{proof}
The next is an outright consequence of the previous theorem which, in particular, establishes the compactness result \cite[Theorem IV.3.2]{EklofMekler} in our setting of almost free modules relative to the class $\Add(X)$.

\begin{corollary}
Let $X$ be a module and assume the following:
\begin{quote}
$(\star)$ There exists a regular cardinal $\kappa > \lambda_X$ such that every stationary set $S\s \kappa$ reflects at a regular cardinal $\mu$ satisfying $\lambda_X < \mu < \kappa$.
\end{quote}
Then every $\kappa$-generated $(\kappa,\Add(X))$-free and strongly $(\kappa,\Add(X))$-free module with a filtration satisfying (1), (2) and (3) of the preceding proposition is trivial.

In particular, this is true if $\kappa$ is a weakly compact cardinal.
\end{corollary}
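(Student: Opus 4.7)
The plan is to argue by contradiction, using the preceding proposition as a black box. Suppose, toward a contradiction, that there exists a non-trivial $\kappa$-generated $(\kappa,\Add(X))$-free and strongly $(\kappa,\Add(X))$-free module $M$ carrying a filtration $\langle M_\alpha\mid\alpha<\kappa\rangle$ satisfying (1)--(3). Thanks to the size bound in (3), each $M_\alpha$ is ${<}\kappa$-generated and lies in $\Add(X)$, hence in $\Add_\kappa(X)$; together with Lemma~\ref{l:StrongFree} and Proposition~\ref{prop: filtration and freeness} this means $M$ fulfils the hypotheses of the preceding proposition verbatim (with $\Add_\kappa(X)$ in place of $\Add(X)$). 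The proposition then produces a stationary set $S\s\kappa$ which does \emph{not} reflect at any regular cardinal $\mu$ with $\lambda_X<\mu<\kappa$. This stands in direct contradiction with $(\star)$, so no such $M$ can exist and every $M$ as in the statement is trivial.

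For the second assertion I would verify that $(\star)$ holds automatically when $\kappa>\lambda_X$ is weakly compact. The input is the classical reflection principle attached to weak compactness: for every stationary $S\s\kappa$, the set $T_S:=\{\mu<\kappa\mid S\cap\mu\text{ is stationary in }\mu\}$ contains a club of inaccessibles of $\kappa$. As $\kappa$ is strongly inaccessible, the tail $(\lambda_X,\kappa)$ is cofinal in $\kappa$ and contains a proper class (in $\kappa$) of regulars, so $T_S\cap(\lambda_X,\kappa)\neq\emptyset$. Any $\mu$ in that intersection is regular and witnesses the reflection of $S$ demanded by $(\star)$. Hence the first part of the corollary applies and delivers triviality.

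No serious obstacle is anticipated: the combinatorial work has already been done in the preceding proposition and in the standard theory of reflection at weakly compact cardinals. The only point one must be attentive to is the translation between $\Add(X)$ and $\Add_\kappa(X)$ in the hypotheses, which is routine given condition (3) and Lemma~\ref{l:StrongFree}, and the fact that the reflection point supplied by weak compactness can be chosen both regular and strictly above $\lambda_X$, which follows immediately from the club structure of $T_S$ plus strong inaccessibility.
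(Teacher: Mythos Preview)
Your approach matches the paper's: the first part is exactly the contraposition of the preceding proposition, and the second part invokes the reflection property of weakly compact cardinals, as the paper does by citing \cite[Lemma~IV.3.1]{EklofMekler}.

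One inaccuracy to correct: you assert that for weakly compact $\kappa$ the set $T_S=\{\mu<\kappa\mid S\cap\mu\text{ is stationary in }\mu\}$ \emph{contains a club of inaccessibles}. This is too strong. In general $T_S$ is only stationary, not club; for instance if $S\subseteq E^\kappa_\omega$ then $T_S$ is disjoint from the stationary set $E^\kappa_\omega$, so it cannot contain a club. The correct fact (and the one the paper quotes) is that there is a \emph{stationary} set $T$ of regular cardinals below $\kappa$ at each point of which $S$ reflects. That is already enough for your purposes: a stationary subset of $\kappa$ meets the tail $(\lambda_X,\kappa)$, so you obtain a regular $\mu$ with $\lambda_X<\mu<\kappa$ witnessing reflection of $S$, and $(\star)$ follows. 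With this adjustment your argument is complete and coincides with the paper's.
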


\begin{proof}
    Follows from the preceding proposition. The last assertion is a consequence of the fact that if $E$ is a stationary subset of a weakly compact cardinal, there exists a stationary set $T$ of regular cardinals such that $S$ reflects at $\alpha$ for each $\alpha \in T$, see \cite[Lemma IV.3.1]{EklofMekler}.
\end{proof}

%\begin{question}
 %   Suppose that $\kappa$ is a supercompact cardinal and $X$ is ${<}\kappa$-generated. Is there a $\kappa$-generated non-trivial $(\kappa,\Add(X))$-separable module?
%\end{question}

\section{Perfect decompositions and Enoch's conjecture}\label{sec: applications}

For a module $M$ a \textit{local direct summand} is a submodule $K\leq M$ which can be expressed as $\bigoplus_{i \in I}K_i$ for a family of sumodules $\{K_i\mid i \in I\}$ satisfying that for each finite set $J \subseteq I$, {$\bigoplus_{j \in J}K_j$} is a direct summand of $M$. A natural \emph{compactness-type}\footnote{Compactness is the abstract phenomenon by which the local properties of a mathematical structure determine the global properties of the structure. More specifically, if $\mathfrak{A}$ is a structure of cardinality $\kappa$ and every substructure $\mathfrak{B}$ of size ${<}\kappa$ witness a property $\varphi$ then so does $\mathfrak{A}$ itself.} question is whether every local direct summand of a module $M$ is in fact a direct summand of it. The next notion emerges from this speculation:
\begin{definition}\label{PerfectDecomposition}
A module $X$ is said to have \textit{a perfect decomposition} if for each $M\in \Add(X)$ every local direct summand of $M$ is  a direct summand of it. %$K$ of a module   is a direct summand of $M$.
\end{definition}

The relationship between having a perfect decomposition and the existence of non-trivial $\Add(X)$-separable modules was established in \cite{CortesGuilTorrecillas}. As a consequence of \cite[Corollary 5.13]{CortesGuilTorrecillas}, and assuming the generalized continuum hypothesis, if $X$ has a perfect decomposition, then every $\kappa$-generated and $(\kappa,\Add(X))$-separable module is trivial for every uncountable cardinal $\kappa$ satisfying that $X$ is $<\kappa$-presented (notice that if $X$ has a perfect decomposition, then $X$ satisfies i) of \cite[Corollary 3.13]{CortesGuilTorrecillas} by \cite[Theorem 1.4]{AngeleriSaorin}). In this paper, using Theorem \ref{MainTheorem}, we prove the coverse of this result, that is, we demonstrate that non-trivial almost free modules relative to $\Add(X)$ exist precisely when $X$ does not have a perfect decomposition. Indeed, there  is a natural expectation for this;  on one hand, if $X$ does have a perfect decomposition then $\mathrm{Add}(X)$ ``is compact'' in regards to direct summands; on the other hand, if there is a non-trivial almost free module relative to $\Add(X)$ then ``compactness fails''.

\smallskip

In order to get the result, we assume that there exist a proper class of cardinals $\kappa$ satisfying $(\star)_{\kappa,\lambda}$ for every regular $\lambda < \kappa$. As a consequence of Lemma \ref{l:StarIsConsistent}, this assumption is relatively consistent with ZFC.

\begin{theorem}\label{thm:perfectdecom}
Assume that there exists a proper class of cardinals $\kappa$ satisfying $(\star)_{\kappa,\lambda}$ for each regular $\lambda < \kappa$. The following are equivalent for a module $X$:
\begin{enumerate}
\item $X$ has a perfect decomposition.

\item For every uncountable regular cardinal $\kappa > \lambda_X$ every $\kappa$-generated and $(\kappa,\Add(X))$-free module is trivial.

\item For every uncountable regular cardinal $\kappa > \lambda_X$ every $\kappa$-generated and strongly $(\kappa,\Add(X))$-free module is trivial.

\item For every infinite regular cardinal $\kappa > \lambda_X$ every $\kappa$-generated and $(\kappa,\Add(X))$-separable module is trivial. 

\item For every uncountable regular cardinal $\kappa > \lambda_X$ every $\kappa$-generated, $(\kappa,\Add(X))$-free and $(\kappa,\Add(X))$-separable module is trivial. 
\end{enumerate}
\end{theorem}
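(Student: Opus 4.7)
The plan is to prove the equivalences by splitting them into an \emph{easy} direction $(1)\Rightarrow(2),(3),(4),(5)$ proven uniformly, and a \emph{hard} direction $\neg(1)\Rightarrow\neg(5)$; the remaining converses close up via the trivial implications $(2)\Rightarrow(5)$ (any free+separable is in particular free), $(4)\Rightarrow(5)$ (any free+separable is in particular separable), and $(3)\Rightarrow(4)$ (by Lemma~\ref{lemma: separable implies strongly free}, since every separable module is strongly free).

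\textbf{From (1) to the other four clauses.} First I would invoke the characterization of perfect decompositions due to Angeleri-Saor\'in \cite{AngeleriSaorin}, which gives, in particular, that $(1)$ implies $\Add(X)$ is closed under direct limits. Granting $(1)$, let $M$ be a $\kappa$-generated module satisfying any of the four hypotheses. Each one produces -- directly (for (2)), via the implication ``strongly free $\Rightarrow$ weakly free'' of Lemma~\ref{l:StrongFree} (for (3)), via Lemma~\ref{lemma: separable implies strongly free} followed by the same implication (for (4)), or trivially (for (5)) -- a direct system $\mathcal{S}\subseteq\Add(X)$ of submodules of $M$ such that every subset of $M$ of cardinality ${<}\kappa$ is contained in some element of $\mathcal{S}$. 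Hence $M=\bigcup\mathcal{S}=\varinjlim\mathcal{S}$, and closure of $\Add(X)$ under direct limits forces $M\in\Add(X)$; that is, $M$ is trivial.

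\textbf{From $\neg(1)$ to $\neg(5)$.} Suppose $X$ does not admit a perfect decomposition. The key technical step is to fabricate a $(\lambda,\Add(X))$-template $\langle N,L,\mathcal{N}\rangle$ for some infinite regular cardinal $\lambda$. Unpacking Definition~\ref{PerfectDecomposition}, $\neg(1)$ supplies a module $L\in\Add(X)$ and a local direct summand $N=\bigoplus_{i\in I}K_i\leq L$ with every finite subsum $\bigoplus_{j\in J}K_j$ a direct summand of $L$ but with $N$ itself \emph{not} a direct summand of $L$. Extracting a well-ordered cofinal subchain of the finite subsums and continuously completing at limits yields a filtration $\mathcal{N}=\langle N_\alpha\mid\alpha<\lambda\rangle$ with $N_\alpha\leq_\oplus L$ for every $\alpha<\lambda$ and $N=\bigcup_\alpha N_\alpha$. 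After possibly passing to a summand of $L$ to ensure continuity at limits (still inside $\Add(X)$) the triple $\langle N,L,\mathcal{N}\rangle$ is the sought $(\lambda,\Add(X))$-template. Now, the proper-class hypothesis provides a regular cardinal $\kappa>\lambda_X$ with $(\star)_{\kappa,\lambda}$, and Theorem~\ref{MainTheorem} delivers a $\kappa^+$-generated, non-trivial module that is simultaneously $(\kappa^+,\Add(X))$-free and $(\kappa^+,\Add(X))$-separable. This module witnesses the failure of~$(5)$.

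\textbf{Main obstacle.} The principal difficulty is the template construction from $\neg(1)$. Turning an abstract non-splitting local direct summand into a \emph{continuous} well-ordered filtration $\mathcal{N}=\langle N_\alpha\mid\alpha<\lambda\rangle$ by direct summands of a common ambient $L\in\Add(X)$ is subtle: one must control the cofinality $\lambda$ of the indexing (a priori only known to be some regular cardinal dictated by the size of the failing local direct summand), verify that continuity at limit stages is preserved while each $N_\alpha$ remains a summand of $L$, and ensure that after any trimming $L$ still belongs to $\Add(X)$ and $N$ still fails to split off. Once this packaging is accomplished, the rest of the argument is a direct appeal to Theorem~\ref{MainTheorem}.
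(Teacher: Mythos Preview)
Your overall architecture matches the paper's: the easy implications $(1)\Rightarrow(2),(3)$ via \cite[Theorem~1.4]{AngeleriSaorin}, $(3)\Rightarrow(4)$ via Lemma~\ref{lemma: separable implies strongly free}, and the trivial $(2),(4)\Rightarrow(5)$ are exactly as in the paper, and the contrapositive $\neg(1)\Rightarrow\neg(5)$ is the substantive step, handled by manufacturing a template from a non-splitting local direct summand and invoking Theorem~\ref{MainTheorem}.

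The gap is in your template construction. Your proposal to ``extract a well-ordered cofinal subchain of the finite subsums'' cannot work as stated: the finite subsets of $I$ admit no cofinal well-ordered chain once $|I|>\aleph_0$ (any strictly increasing chain of finite sets has length at most $\omega$). More importantly, even if you simply enumerate $I$ and set $N_\alpha=\bigoplus_{\beta<\alpha}K_{i_\beta}$, you have no mechanism ensuring that $N_\alpha\leq_\oplus L$ at infinite stages; the local-direct-summand hypothesis controls only \emph{finite} subsums, and your suggested fix of ``passing to a summand of $L$'' does not obviously preserve the non-splitting of $N$. The paper resolves this with one clean idea you are missing: choose the witness $L\in\Add(X)$ and $N=\bigoplus_{i\in I}\bar N_i\nleq_\oplus L$ so that $\lambda:=|I|$ is \emph{minimal} among all such witnesses. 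Minimality forces $\lambda$ to be regular, and---this is the point---any local direct summand of $L$ indexed by fewer than $\lambda$ pieces must actually be a direct summand of $L$ (otherwise $\lambda$ would not be minimal). Writing $I=\bigcup_{\alpha<\lambda}I_\alpha$ as a continuous increasing union with $|I_\alpha|<\lambda$ and setting $N_\alpha=\bigoplus_{i\in I_\alpha}\bar N_i$ then gives the $(\lambda,\Add(X))$-template directly, with no trimming of $L$ required. (A minor addendum: when invoking the proper-class hypothesis you need $\kappa>\max\{\lambda_X,\lambda\}$, not merely $\kappa>\lambda_X$, so that Theorem~\ref{MainTheorem} applies.)
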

\begin{proof}
(1) $\Rightarrow$ (2): Follows from Proposition \ref{prop: filtration and freeness} and \cite[Theorem 1.4]{AngeleriSaorin}.

(1) $\Rightarrow$ (3): Follows from Lemma \ref{l:equivalencebetweenseparability} and \cite[Theorem 1.4]{AngeleriSaorin}.

(3) $\Rightarrow$ (4): Follows from Lemma \ref{lemma: separable implies strongly free}.

(2) $\Rightarrow$ (5) and (4) $\Rightarrow$ (5) are trivial.

(5) $\Rightarrow$ (1):  Suppose that $X$ does not have a perfect decomposition and let a module  $L\in \Add(X)$ admitting a local direct summand  $N:=\bigoplus_{i \in I} \bar{N}_i$ that is not a direct summand of it. Moreover, this is chosen so that $I$ has the minimum cardinality witnessing this. Set $|I|=\lambda$. It is not hard to show that $\lambda$ is regular. Writing $I=\bigcup_{\alpha < \lambda}I_\alpha$ for a family of sets $\{I_\alpha \mid \alpha < \lambda\}$ with cardinality smaller than $\lambda$ and setting $N_\beta:=\sum_{i \in I_\alpha} \bar{N}_\alpha$ we get a continuous chain of direct summands of $L$ with union $N$. In particular, $\langle N,L, \mathcal{N}\rangle$ is a $\lambda$-template, where $\mathcal N=\langle N_\alpha \mid \alpha < \lambda\rangle$.

\smallskip

Now let $\kappa$ be a cardinal greater than $\max\{\lambda_X,\lambda\}$ and satifying $(\star)_{\kappa,\lambda}$. We can apply Theorem~\ref{MainTheorem} to get a $\kappa^+$-generated, $(\kappa^+,\Add(X))$-free and $(\kappa^+,\Add(X))$-separable module that is not trivial. Then (5) above is false. %This concludes the proof.%\ale{Maybe $\kappa^+$-separable?}
\end{proof}
%An interesting aspect of the above theorem is that it connects two instances of compactness provided the set-theoretic universe encompasses many non-compact objects (i.e., non-reflecting stationary sets).\smallskip

Changing gears next we establish the consistency of ZFC with Enoch's conjecture for classes of the form $\Add(X)$ (see p.\pageref{sec: prelimminaries}).
%Employing the set-theoretic assumption used in Theorem \ref{thm:perfectdecom} entails Enoch's conjecture  
This result was first proved by J. \v Saroch in \cite[Theorem 2.2]{Saroch} under the existence of a proper class of cardinals $\kappa$ satisfying that every stationary subset $E$ of $\kappa$ admits a non-reflecting stationary subset. In our case we will employ the set-theoretic assumption used in Theorem \ref{thm:perfectdecom}.  

\smallskip

We will use the following result, which is based on \cite[Corollary 5.3]{BSP} which states that a module $M$ belongs to $\mathcal{X}$ provided it has a  $\mathcal X$-precover with locally split kernel and an $\mathcal X$-cover (here $\mathcal X$  is a class of modules closed under direct summands). Recall that a morphism $m:K \rightarrow L$ is called locally split if for every $k \in K$ there exists $h:L \rightarrow K$ with $hm(k)=k$.

\begin{lemma}\label{ExistenceofAddXcovers}
    Let $\kappa$ be an infinite regular cardinal, $X$ a module and $M$, a $\kappa$-generated and $(\kappa, \mathrm{Add}(X))$-separable module with $\kappa>\lambda_X$. Then the following assertions are equivalent:
    \begin{enumerate}
        \item $M\in \mathrm{Add}(X)$;
        \item $M$ has an $\Add(X)$-cover.
    \end{enumerate}
   Thus, if $\Add(X)$ is covering then every $\kappa$-generated and $(\kappa,\Add(X))$-separable module is trivial for all regular cardinals $\kappa>\lambda_X$.
\end{lemma}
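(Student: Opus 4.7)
The direction $(1)\Rightarrow(2)$ is immediate: if $M\in\mathrm{Add}(X)$ then $\mathrm{id}_M$ serves as an $\mathrm{Add}(X)$-cover. The final assertion of the lemma then follows automatically, since the covering hypothesis supplies the cover required by $(2)$. So the plan is to prove $(2)\Rightarrow(1)$ by invoking \cite[Corollary~5.3]{BSP} as recalled above: together with the given $\mathrm{Add}(X)$-cover, it suffices to produce an $\mathrm{Add}(X)$-precover of $M$ whose kernel is locally split.

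The precover is built from the filtration. Applying Lemma~\ref{l:equivalencebetweenseparability} to the $\kappa$-generated, $(\kappa,\mathrm{Add}(X))$-separable module $M$, I obtain a $\kappa$-filtration $\langle M_\alpha\mid \alpha<\kappa\rangle$ with each $M_{\alpha+1}\in\mathrm{Add}_\kappa(X)$ a direct summand of $M$. Set $P:=\bigoplus_{\alpha<\kappa}M_{\alpha+1}$, which belongs to $\mathrm{Add}(X)$, and let $\pi\colon P\to M$ be the sum of the inclusions $M_{\alpha+1}\hookrightarrow M$. Surjectivity of $\pi$ is clear because $M=\bigcup_{\alpha<\kappa}M_\alpha$.

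To see that $\pi$ is an $\mathrm{Add}(X)$-precover, take any $f\colon N\to M$ with $N\in\mathrm{Add}(X)$. By Walker's theorem write $N=\bigoplus_{j\in J}N_j$ with each $N_j$ a $\lambda_X$-generated module in $\mathrm{Add}(X)$. Since $\lambda_X<\kappa$ and $\kappa$ is regular, the $<\kappa$-generated submodule $f(N_j)$ is contained in some $M_{\alpha_j+1}$. Composing the corestricted map $N_j\to M_{\alpha_j+1}$ with the canonical (split) inclusion $M_{\alpha_j+1}\hookrightarrow P$ and summing over $j\in J$ yields a lift $\tilde f\colon N\to P$ with $\pi\tilde f=f$.

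The heart of the argument, and the only step requiring care, is showing $\ker\pi$ is locally split. Given $\xi\in\ker\pi$ with finite support $\alpha_1<\cdots<\alpha_n$, set $P_0:=\bigoplus_{i\le n}M_{\alpha_i+1}$, a direct summand of $P$. The sum-of-inclusions map $\sigma\colon P_0\to M_{\alpha_n+1}$ is split surjective, with the inclusion of the last coordinate as a section; consequently $\ker\sigma$ is a direct summand of $P_0$. Because $M_{\alpha_n+1}\hookrightarrow M$ is injective one has $\ker\sigma=\ker\pi\cap P_0$, so $\xi\in\ker\sigma$. Extending the retraction $P_0\to\ker\sigma$ by zero on a complement of $P_0$ in $P$ produces a map $h\colon P\to\ker\pi$ with $h(\xi)=\xi$. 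An application of \cite[Corollary~5.3]{BSP} then yields $M\in\mathrm{Add}(X)$, completing $(2)\Rightarrow(1)$.
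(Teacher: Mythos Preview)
Your proof is correct and follows essentially the same strategy as the paper: build the canonical map from the direct sum of the filtration pieces into $M$, check it is an $\Add(X)$-precover with locally split kernel, and invoke \cite[Corollary~5.3]{BPS}. The only differences are that you restrict to successor indices $M_{\alpha+1}$ (which is the cleaner choice, since only these are guaranteed to lie in $\Add(X)$), you verify the precover property directly via Walker's theorem rather than first observing it is an $\{X\}$-precover, and you prove the local splitness of $\ker\pi$ by hand rather than citing \cite[Lemma~2.1]{GomezGuil} for the canonical presentation of a well-ordered direct limit.
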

\begin{proof}
  Let $\langle M_\alpha\mid \alpha<\kappa\rangle$ be a $\kappa$-filtration of $M$ given by Lemma \ref{l:equivalencebetweenseparability} and $$\textstyle \varphi\colon \bigoplus_{\alpha<\kappa} M_\alpha\rightarrow M$$ be the homomorphism induced by the inclusion maps $i_\alpha\colon M_\alpha\rightarrow M$.  Since $\varphi$ is the morphism associated to a totally ordered direct limit, it is locally split by \cite[Lemma 2.1]{GomezGuil}. Since $X$ is ${<}\kappa$-generated and $M$ is $(\kappa,\Add(X))$-separable, $\varphi$ is an $\{X\}$-precover, hence an $\Add(X)$-precover. By \cite[Corollary 5.3]{BPS}, $M\in \mathrm{Add}(X).$
\end{proof}

%Our next theorem will rely on the following set-theoretical hypothesis:
%\begin{quote}\label{h:Stationary}
%$(\mathcal{S})^+$: For each infinite cardinal $\kappa$ and a stationary set $S\s \kappa^+$ there is a stationary set $T\s S$  which does not reflect and $T\in I[\kappa^+]$.% a non-reflecting stationary set. % $S'$ which is stationary and non-reflecting in $\kappa^+$.
%\end{quote}
%Once again
 %   $(\mathcal{S})^+$ is consistent with ZFC as it is implied, e.g. by $``0^\sharp$ does not exist''.

\begin{theorem}
Assume that there exists a proper class of cardinals $\kappa$ satisfying $(\star)_{\kappa,\lambda}$ for each regular  $\lambda < \kappa$. The following are equivalent for a module $X$:
\begin{enumerate}
\item $X$ has a perfect decomposition;
\item $\Add(X)$ is closed under direct limits;
\item $\mathrm{Add}(X)$ is a covering class;
\end{enumerate}
    Therefore, under our set-theoretic assumption Enoch's conjecture holds.
\end{theorem}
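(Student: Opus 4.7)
The plan is to prove the cycle $(1)\Rightarrow(2)\Rightarrow(3)\Rightarrow(1)$ and then read off Enochs's conjecture for $\Add(X)$ from the equivalence of (2) and (3). The first implication is classical: a module $X$ with a perfect decomposition has $\Add(X)$ closed under direct limits by \cite[Theorem~1.4]{AngeleriSaorin}, which I would simply cite rather than reprove. No set-theoretic input is needed for this step.

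For $(2)\Rightarrow(3)$ I would first observe that $\Add(X)$ is always a precovering class: for any module $M$, the canonical evaluation morphism $X^{(\Hom(X,M))}\rightarrow M$ is an $\{X\}$-precover (every morphism from $X$ to $M$ factors through it), and since $\Add(X)$ is the closure of $\{X\}$ under direct sums and direct summands, this map is in fact an $\Add(X)$-precover. Enochs's theorem, recalled just before the Conjecture in Section~\ref{sec: prelimminaries}, then upgrades precovering $+$ closed under direct limits to covering. Again, this step is completely general and ZFC-provable.

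The crux of the argument is $(3)\Rightarrow(1)$, which is where the set-theoretic hypothesis is absorbed via Theorem~\ref{thm:perfectdecom}. Assume $\Add(X)$ is covering and pick any regular cardinal $\kappa>\lambda_X$. For any $\kappa$-generated $(\kappa,\Add(X))$-separable module $M$, the covering hypothesis furnishes $M$ with an $\Add(X)$-cover, so Lemma~\ref{ExistenceofAddXcovers} forces $M\in\Add(X)$; in other words $M$ is trivial. This is precisely clause (4) of Theorem~\ref{thm:perfectdecom}, and that theorem (whose proof requires the proper class of cardinals with $(\star)_{\kappa,\lambda}$) then delivers the perfect decomposition of $X$. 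The only delicate point in this step is to verify that Lemma~\ref{ExistenceofAddXcovers} applies even to the smallest admissible $\kappa$, but its hypotheses are exactly $\lambda_X<\kappa$ and $\kappa$-generated $(\kappa,\Add(X))$-separability, so the fit is automatic.

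For the final assertion about Enochs's conjecture, one simply notes that the equivalence $(2)\Leftrightarrow(3)$ says that any covering class of the form $\Add(X)$ is closed under direct limits, which is the statement of the conjecture for such classes. The main technical work has already been packaged into Theorem~\ref{thm:perfectdecom} and, through it, into the construction of Theorem~\ref{MainTheorem}; here we are essentially assembling known pieces. The only conceivable obstacle in this assembly is confirming that $\Add(X)$ really is precovering in full generality, but this is a direct computation with the evaluation map and requires no additional hypotheses on $R$ or $X$.
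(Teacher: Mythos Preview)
Your proposal is correct and follows essentially the same route as the paper: the cycle $(1)\Rightarrow(2)\Rightarrow(3)\Rightarrow(1)$, with the first step via Angeleri--Saor\'{\i}n, the second via precovering $+$ Enochs's theorem, and the third by combining Lemma~\ref{ExistenceofAddXcovers} with Theorem~\ref{thm:perfectdecom}. The only cosmetic difference is that for $(1)\Rightarrow(2)$ the paper cites \cite[Theorem~2.1]{AngeleriSaorin} together with Lemma~\ref{lemma: prelimminaries well-ordered limits} (to pass from well-ordered to arbitrary direct limits), whereas you invoke \cite[Theorem~1.4]{AngeleriSaorin} directly; the substance is the same.
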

\begin{proof}
    (1) $\Rightarrow$ (2). By \cite[Theorem 2.1]{AngeleriSaorin} and Lemma \ref{lemma: prelimminaries well-ordered limits}.

    (2) $\Rightarrow$ (3). The class $\Add(X)$ is always precovering. If it is closed under direct limits, then it must be covering (see e.g., \cite[Theorem 2.2.12]{Xu}).

    (3) $\Rightarrow$ (1) Follows from the preceding lemma and Theorem \ref{thm:perfectdecom}.
\end{proof}

Our forthcoming theorems will not bear on the instrumental Theorem~\ref{MainTheorem} yet will still
draw further connections between almost free modules and Enoch's conjecture. Part of the subsequent discussions bear on the following concept:

\begin{definition}
    Let $\kappa$ be an infinite regular cardinal and $\mathcal X$ a class of modules. We say that $\mathcal X$ is closed under $\kappa$-free modules if every $(\kappa,\mathcal X)$-free module belongs to $\mathcal X$.
\end{definition}

A natural direction to asses the independence of Enoch's conjecture from ZFC is to find (perhaps, subject to extra set-theoretic hypothesis) a module $X$ such that the following hold: (1) $\Add(X)$ is not closed under direct limits; (2) every $(\kappa,\Add(X))$-separable module is trivial for every $\kappa >\lambda_X$. In view of Lemma \ref{ExistenceofAddXcovers}, $\Add(X)$ would be a candidate for a class not satisfying Enoch's conjecture. The following result, established in ZFC, says that for $\kappa=\aleph_1$ this is not possible:

 \begin{theorem}\label{thm: Enochs for X}
    Suppose that $\mathcal{X}$ is a class of modules closed under $\aleph_1$-free modules, direct sums and direct summands. Then, if $\mathcal{X}$ is precovering it is closed under direct limits. In particular, Enoch's conjecture holds for those classes.
\end{theorem}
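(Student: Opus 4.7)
By Lemma \ref{lemma: prelimminaries well-ordered limits}, it suffices to prove that $\mathcal{X}$ is closed under well-ordered direct limits of some fixed infinite length, and I would aim for length $\omega_1$, since this is the length that naturally interacts with the $\aleph_1$-free hypothesis. So let $M=\varinjlim_{\alpha<\omega_1}X_\alpha$ be a continuous well-ordered direct system with $X_\alpha\in\mathcal{X}$, transition maps $u_{\alpha\beta}:X_\alpha\to X_\beta$, and canonical maps $u_\alpha:X_\alpha\to M$. Set $M_\alpha:=u_\alpha(X_\alpha)\subseteq M$, so that $(M_\alpha)_{\alpha<\omega_1}$ is a continuous chain of submodules with $M=\bigcup_{\alpha<\omega_1}M_\alpha$, and because $\omega_1$ has uncountable cofinality, every countable subset of $M$ already lies inside some $M_\alpha$.

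The strategy is to show that $M$ is $(\aleph_1,\mathcal{X})$-free; the hypothesis that $\mathcal{X}$ is closed under $\aleph_1$-free modules then delivers $M\in\mathcal{X}$ for free. The collection $\{M_\alpha\mid\alpha<\omega_1\}$ already fulfills the density and countable-chain-closure requirements of Definition \ref{d:KappaFree}, so the task collapses to verifying that each $M_\alpha$ lies in $\mathcal{X}$. Here the precovering hypothesis enters: fix an $\mathcal{X}$-precover $f:Y\to M$ and, using that $X_\alpha\in\mathcal{X}$, lift every $u_\alpha$ to some $v_\alpha:X_\alpha\to Y$ with $f\circ v_\alpha=u_\alpha$; in particular $f(v_\alpha(X_\alpha))=M_\alpha$ and $f$ is surjective. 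Note also that $\bigoplus_\alpha X_\alpha\in\mathcal{X}$ and that the canonical map $\varphi:\bigoplus_\alpha M_\alpha\to M$ associated to the filtration $(M_\alpha)_\alpha$ is locally split by \cite[Lemma~2.1]{GomezGuil}, exactly as in the proof of Lemma \ref{ExistenceofAddXcovers}.

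The plan is then to combine $\varphi$, $f$, and the lifts $v_\alpha$ in a diagram chase in the spirit of Lemma \ref{ExistenceofAddXcovers}, and to invoke \cite[Corollary~5.3]{BPS} together with the closure of $\mathcal{X}$ under direct sums and under direct summands, so as to realize each $M_\alpha$ as a direct summand of a module in $\mathcal{X}$. This yields $M_\alpha\in\mathcal{X}$ for every $\alpha<\omega_1$, whence $\{M_\alpha\mid\alpha<\omega_1\}$ is an honest $(\aleph_1,\mathcal{X})$-dense system of submodules of $M$, so $M$ is $(\aleph_1,\mathcal{X})$-free and therefore lies in $\mathcal{X}$ by the closure hypothesis.

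\textbf{Main obstacle.} The delicate step is the last one before the conclusion: promoting the \emph{a priori} information that $M_\alpha$ is only a homomorphic image of the module $X_\alpha\in\mathcal{X}$ to the statement $M_\alpha\in\mathcal{X}$. Since $\mathcal{X}$ is not assumed to be closed under quotients, this cannot be read off $X_\alpha\in\mathcal{X}$ alone, and it is precisely here that the precovering property, the local splitness of $\varphi$, and the direct-sum/direct-summand closures must be used together in an essential way. Once this is achieved, the rest of the argument is formal.
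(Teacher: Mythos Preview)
Your proposal has a genuine gap, and it is precisely the one you flag as the ``main obstacle'': you never actually show that the images $M_\alpha=u_\alpha(X_\alpha)$ lie in $\mathcal{X}$, and the tools you list cannot close this gap. The result \cite[Corollary~5.3]{BPS} requires the module in question to have an $\mathcal{X}$-\emph{cover}, not merely a precover; since we are only assuming $\mathcal{X}$ is precovering, that corollary is unavailable both for $M_\alpha$ and for $M$. Likewise, the locally split map $\varphi\colon\bigoplus_\alpha M_\alpha\to M$ tells you something about $M$, not about the individual $M_\alpha$, and there is no mechanism in sight to exhibit $M_\alpha$ (a bare quotient of $X_\alpha$) as a direct summand of a module in $\mathcal{X}$. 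Without $M_\alpha\in\mathcal{X}$ the system $\{M_\alpha\}$ is not $(\aleph_1,\mathcal{X})$-dense and the argument collapses. There is also a smaller slip: Lemma~\ref{lemma: prelimminaries well-ordered limits}(4) asks for closure under well-ordered limits of \emph{every} cardinality $\geq\kappa$, not of one fixed length, so restricting to $\omega_1$ alone would not suffice even if the rest worked.

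The paper's proof avoids this trap entirely by never attempting to show that $F=\varinjlim F_\alpha$ is itself $(\aleph_1,\mathcal{X})$-free. Instead it builds, following \cite{Cortes17}, an auxiliary module $L$ which is $(\aleph_1,\mathrm{Sum}\{F_\alpha\})$-free \emph{by construction} (so $L\in\mathcal{X}$ by hypothesis) and which sits in a short exact sequence $0\to D\to L\to F^{(2^\kappa)}\to 0$ with $|D|\leq\kappa$ for a carefully chosen $\kappa$. One then takes an $\mathcal{X}$-precover $f\colon A\to F$ and runs \v{S}aroch's cardinality argument on the long exact $\mathrm{Ext}$ sequence to force $\mathrm{Ext}^1_R(F,m)$ to be monic, hence $f$ splits and $F\leq_\oplus A\in\mathcal{X}$. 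The $\aleph_1$-free hypothesis is thus spent on the auxiliary $L$, not on $F$ or its image filtration, which is exactly what your approach is missing.
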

\begin{proof}
    By Lemma~\ref{lemma: prelimminaries well-ordered limits} it suffices to show that $\mathcal{X}$ is closed under direct limits of totally ordered systems of modules in $\mathcal{X}$. Since every totally ordered set admits a cofinal subset that is linearly ordered we may and do assume that our system of modules $\mathcal{F}$ is indexed by an ordinal $\lambda$; thus $\mathcal{F}=(F_{\alpha},\iota_{\alpha,\beta})_{\alpha\leq \beta<\lambda}$ with $F_\alpha \in \mathcal X$ for every $\alpha < \kappa$. By passing to a cofinal subset of $\lambda$  we may assume  
   % Both  $\mathcal{D}$ and  $\langle F_{\alpha_\nu},\iota_{\alpha_\nu,\alpha_{\eta}}\mid \nu\leq \eta<\cf(\mu)\rangle$ -- for $\langle \alpha_\nu\mid \nu<\cf(\mu)\rangle$  a cofinal sequence in $\mu$ -- yield the same direct limit. Thus, 
   that $\lambda$ is a regular cardinal.  Set $F:=\varinjlim \mathcal{F}$. 

   \smallskip

   Using the argument of \cite[Lemma 2.3]{Cortes17}, we can find an infinite cardinal $\kappa$ greater than $\max\{\lambda, |F|, |R|\}$ satisfying that $\kappa^\mu=\kappa$ for each cardinal $\mu < \kappa$ and $\kappa^\lambda=2^\kappa$. Following \cite[Section~2]{Cortes17}, one constructs the $(\aleph_1,\mathrm{Sum}\{F_\alpha\mid \alpha<\lambda\})$-free module $L$ associated to  $\mathcal{F}$ and $\kappa$. Since $\mathcal{X}$ is closed both under direct sums and $\aleph_1$-free modules it follows that $L\in \mathcal{X}.$ Also, by \cite[Lemma~2.1]{Cortes17} this module comes together with a short exact sequence 
  
\begin{displaymath}
\begin{tikzcd}
0 \arrow{r} & D \arrow{r}{\s}  & L \arrow{r}  & F^{(2^\kappa)} \arrow{r} & 0.
\end{tikzcd}
\end{displaymath}
where $|D|=\kappa$ by the election of $\kappa$. Since $\mathcal{X}$ is a precovering class there is a  short exact sequence
\begin{displaymath}
\begin{tikzcd}
0 \arrow{r} & M \arrow{r}{m}  & A \arrow{r}{f}  & F \arrow{r} & 0
\end{tikzcd}
\end{displaymath}
with $A\in\mathcal{X}$ and  $f$ an $X$-precover of $F$. 
%Next we show that $F$ is a direct summand of $A$:
\begin{claim}
    $f$ splits.
\end{claim}
\begin{proof}[Proof of claim]
    This argument is due to \v{S}aroch (see \cite[Lemma~3.2]{SarochMittagLeffler}) and we include it for the reader's convenience only. Apply $\mathrm{Hom}_{R}(-, m)$ to the previously displayed short exact sequence to obtain the following diagram with exact arrows
    \begin{displaymath}
\begin{tikzcd}
\mathrm{Hom}_{R}(D, M) \arrow{r}{\delta} \arrow{d}{\mathrm{Hom}_{R}(D, m)} & \mathrm{Ext}^1_R(F,M)^{2^\kappa} \arrow{r} \arrow{d}{\mathrm{Ext}^1_R(F,m)^{2^\kappa}} & \mathrm{Ext}^1_R(L,M)   \arrow{d}{\mathrm{Ext}^1_R(L,m)} \\
\mathrm{Hom}_{R}(D, A) \arrow{r} & \mathrm{Ext}^1_R(F,A)^{2^\kappa} \arrow{r}{\overline k} & \mathrm{Ext}^1_R(L,A). 
\end{tikzcd}
\end{displaymath}
Since $L\in \mathcal{X}$ and $f$ is an $\mathcal{X}$-precover, $\mathrm{Ext}^1_R(L,m)$ is monic, hence $\mathrm{Ker}(\mathrm{Ext}^1_R(F,m))^{2^\kappa}\s \mathrm{Im}(\delta)$. Nonetheless, $2^\kappa\geq |\mathrm{Hom}_R(D,M)|\geq |\mathrm{Im}(\delta)|$ and if $\mathrm{Ext}^1_{R}(F,m)$ is not monic, then $\Ker(\mathrm{Ext}^1_R(F,m))^{2^\kappa})\geq 2^{2^\kappa}$, so that $\Ext^1_R(F,m)$ has to be monic. This latter is equivalent to $\mathrm{Hom}_R(F,f)$ to be onto which implies, in particular, that $f$ splits.
\end{proof}

Thus $F\leq_{\oplus} A\in \mathcal{X}$ and, since $\mathcal X$ is closed under direct summands $F\in \mathcal{X}$.
\end{proof}

We are going to apply this result to some classes of relatively Mittag-Leffler modules. For a class $\mathcal{Q}$ of right $R$-modules, a left $R$-module $M$ is called \emph{$\mathcal{Q}$-Mittag-Leffler} if for each family of modules $\{ Q_i\mid i\in I\}$ belonging to $\mathcal{Q}$ the natural homomorphism
$$\psi\colon M\otimes_R \prod_{i\in I} Q_i\rightarrow \prod_{i\in I}M\otimes_R Q_i$$
is monic. A  left $R$-module $M$ is called \emph{Mittag-Leffler} whenever $\mathcal{Q}=\mathrm{Mod}$-$R$.

Mittag-Leffler modules were introduced by Raynaud
and Gruson in \cite{Rayunaud}. The key property of the class of $\mathcal Q$-Mittag-Leffler modules is that it is closed under $\aleph_1$-free modules \cite[Theorem 2.6]{HerberaTrlifaj}. Actually, the class of all modules with $\mathcal Q$-Mittag-Leffler dimension less than or equal to $n$ is closed under $\aleph_1$-free modules \cite[Corollary 3.4]{CortesProducts}. Recall that a module $M$ has \textit{$\mathcal Q$-Mittag-Leffler dimension less than or equal to $n\geq 1$} if $M$ has a projective resolution whose $(n-1)st$-syzygy is $\mathcal Q$-Mittag-Leffler (and we say that $M$ has $\mathcal Q$-Mittag-Leffler dimension $0$ if it is $\mathcal Q$-Mittag-Leffler). We say that the left global $\mathcal Q$-Mittag-Leffler dimension of $R$ is less than or equal to $n$ if every module has $\mathcal Q$-Mittag-Leffler dimension less than or equal to $n$. 

\smallskip

As an immediate consequence of Theorem~\ref{thm: Enochs for X}  we get the following extension of \cite[Theorem 2.6]{TrlifajYassine} to the non-flat setting:

\begin{corollary}
    Let $\mathcal Q$ be a class of right $R$-modules, $n$ a natural number. If $\mathcal M$ is the class of all left $R$-modules with $\mathcal Q$-Mittag-Leffler dimension less than or equal to $n$ then, $\mathcal M$ is precovering if and only if the left global $\mathcal Q$-Mittag-Leffler of $R$ is less than or equal to $n$.

    In particular, Enoch's conjecture holds for $\mathcal M$.
\end{corollary}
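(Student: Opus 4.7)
The plan is to reduce the statement to Theorem~\ref{thm: Enochs for X} applied to the class $\mathcal M$. The ``if'' direction is immediate: if the left global $\mathcal Q$-Mittag-Leffler dimension of $R$ is at most $n$, then $\mathcal M$ coincides with $R$-$\mathrm{Mod}$, which is trivially precovering (the identity on $M$ serves as an $\mathcal M$-precover of any module $M$). So the content lies entirely in the converse.

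For the converse, I would first verify the three closure hypotheses of Theorem~\ref{thm: Enochs for X}. Closure of $\mathcal M$ under $\aleph_1$-free modules is precisely \cite[Corollary~3.4]{CortesProducts}, quoted just before the statement. Closure under direct sums is routine from the fact that both $\otimes_R$ and the natural comparison map $\psi$ commute with direct sums in the first variable, combined with the observation that a coproduct of projective resolutions is a projective resolution of the coproduct; thus the $(n{-}1)$st syzygy of a direct sum is (up to summands of projectives) the direct sum of the $(n{-}1)$st syzygies, and Mittag-Leffler is preserved by direct sums. Closure under direct summands follows because Mittag-Leffler descends to direct summands (the map $\psi$ is natural, hence its restriction to a direct summand stays injective), and an appeal to a Schanuel-type argument shows that the $(n{-}1)$st syzygy of a summand inherits $\mathcal Q$-Mittag-Lefflerness up to a projective direct summand.

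With the hypotheses in place, Theorem~\ref{thm: Enochs for X} yields that a precovering $\mathcal M$ is closed under direct limits. To finish, I would observe that every finitely presented left $R$-module is Mittag-Leffler with respect to any class $\mathcal Q$ (the comparison map $\psi$ is an isomorphism in that case), hence it has $\mathcal Q$-Mittag-Leffler dimension $0$ and belongs to $\mathcal M$. Since every left $R$-module is a direct limit of finitely presented modules, closure under direct limits forces $\mathcal M = R$-$\mathrm{Mod}$; equivalently, the left global $\mathcal Q$-Mittag-Leffler dimension of $R$ is at most $n$.

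The final ``in particular'' clause about Enoch's conjecture for $\mathcal M$ is then immediate: if $\mathcal M$ is covering it is precovering, so by the equivalence just established $\mathcal M = R$-$\mathrm{Mod}$, which is trivially closed under direct limits. The only genuine obstacle I anticipate is the careful verification of closure under direct summands at the level of syzygies; everything else is a direct invocation of Theorem~\ref{thm: Enochs for X} together with the classical fact that finitely presented modules are Mittag-Leffler.
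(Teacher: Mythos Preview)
Your proposal is correct and follows essentially the same route as the paper: verify the closure hypotheses of Theorem~\ref{thm: Enochs for X}, apply it to conclude $\mathcal M$ is closed under direct limits, and then use that finitely presented modules are $\mathcal Q$-Mittag-Leffler together with the standard direct-limit presentation of an arbitrary module. The only difference is that the paper dispatches all three closure properties (direct sums, direct summands, and $\aleph_1$-free modules) in one stroke by citing \cite[Corollary~3.4]{CortesProducts}, so your anticipated ``obstacle'' concerning direct summands at the syzygy level is already absorbed into that reference.
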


\begin{proof}
    Suppose that $\mathcal M$ is precovering. The class $\mathcal M$ is closed under direct sums, direct summands and $\aleph_1$-free modules by \cite[Corollary 3.4]{CortesProducts}. By the previous theorem, $\mathcal M$ is closed under direct limits. But this implies that every module belongs to $\mathcal M$, since every module is a direct limit of finitely presented modules and finitely presented modules are $\mathcal Q$-Mittag-Leffler.
\end{proof}

Given $n$ a natural number, we say that the ring $R$ is \textit{left weak $n$-coherent} \cite[p. 4566]{CortesProducts} if the product of every family of flat right $R$-modules has flat dimension less than or equal to $n$ ($R$ is left weak $0$-coherent if and only if it is left cohererent). We get:

\begin{corollary}
    Let $\mathcal F$ be the class of all flat right $R$-modules, $n$ a natural number and $\mathcal M_n$ the class of all left  $R$-modules with $\mathcal F$-Mittag-Leffler dimension less than or equal to $n$. Then:
    \begin{enumerate}
        \item If $n=0$, $\mathcal M_0$ is precovering if and only if $R$ is left noetherian.
        \item If $n=1$, $\mathcal M_1$ is precovering if and only if $R$ is left coherent.
        \item If $n>1$, $\mathcal M_n$ is precovering if and only if $R$ is left weak $n$-coherent.
    \end{enumerate}
\end{corollary}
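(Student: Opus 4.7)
The plan is to invoke the preceding corollary, which identifies $\mathcal M_n$ being precovering with the assertion that every left $R$-module has $\mathcal F$-Mittag-Leffler dimension at most $n$, equivalently that the left global $\mathcal F$-Mittag-Leffler dimension of $R$ is $\leq n$. The remaining task becomes purely ring-theoretic: to characterize when this uniform bound holds in terms of the stated finiteness conditions on $R$.

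For $n \geq 1$, I would proceed via the Tor-theoretic criterion for $\mathcal F$-Mittag-Leffler modules combined with dimension shifting along short exact sequences of the form $0 \to \Omega M \to P \to M \to 0$ with $P$ projective. This should translate the uniform bound on $\mathcal F$-Mittag-Leffler dimensions of arbitrary left modules into a uniform bound on the flat dimensions of products of flat right $R$-modules. For $n = 1$ this recovers Chase's classical theorem characterizing left coherence as the closure of the class of flat right $R$-modules under products; for $n \geq 2$ it matches the very definition of left weak $n$-coherence adopted in the paper. The bookkeeping here is routine modulo getting the syzygy indexing correct, and I would appeal to the explicit dimension-shifting formulas already developed in~\cite{CortesProducts}.

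The case $n = 0$ is of a rather different flavor and will be the main obstacle, as it does not reduce to a mechanical Tor computation. What must be shown is that every left $R$-module is $\mathcal F$-Mittag-Leffler if and only if $R$ is left noetherian. The forward direction exploits that finitely generated modules over a noetherian ring are finitely presented and therefore Mittag-Leffler, combined with the closure of $\mathcal F$-Mittag-Leffler modules under suitable filtrations; the reverse direction extracts, from any strictly ascending chain of left ideals, a countably generated module that fails the $\mathcal F$-Mittag-Leffler test against a suitable product of flat right modules. Again I expect the needed characterization to be already available in~\cite{CortesProducts}, so the actual labor reduces to assembling the three cases and invoking the preceding corollary.
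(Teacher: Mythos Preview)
Your proposal is correct and follows essentially the same route as the paper: reduce via the preceding corollary to the statement that the left global $\mathcal F$-Mittag-Leffler dimension of $R$ is at most $n$, and then identify this with the appropriate finiteness condition on $R$. One minor point: for $n=0$ the paper cites \cite[Example~5.6]{AngeleriHerbera} rather than \cite{CortesProducts} for the equivalence ``every left $R$-module is $\mathcal F$-Mittag-Leffler $\Leftrightarrow$ $R$ is left noetherian'', so you may not find that particular characterization where you expect it; also, your labeling of ``forward'' and ``reverse'' for the two implications in that equivalence is swapped relative to the biconditional as you stated it.
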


\begin{proof}
    (2) and (3) follows from \cite[Theorem 4.2]{CortesProducts}. For $n=0$, the previous corollary says that every left $R$-module is $\mathcal F$-Mittag-Leffler. This is equivalent to $R$ being left noetherian by \cite[Example 5.6]{AngeleriHerbera}.
\end{proof}

\bibliographystyle{alpha} 
\bibliography{citations}

\begin{thebibliography}{GCIT10}

\bibitem[AF92]{AndersonFuller}
Frank~W Anderson and Kent~R Fuller.
\newblock {\em Rings and categories of modules}, volume~13.
\newblock Springer Science \& Business Media, 1992.

\bibitem[AHH08]{AngeleriHerbera}
Lidia Angeleri~H\"ugel and Dolors Herbera.
\newblock Mittag-{L}effler conditions on modules.
\newblock {\em Indiana Univ. Math. J.}, 57(5):2459--2517, 2008.

\bibitem[BP{\v{S}}22]{BSP}
Silvana Bazzoni, Leonid Positselski, and Jan {\v{S}}t'ov{\'\i}{\v{c}}ek.
\newblock Projective covers of flat contramodules.
\newblock {\em International Mathematics Research Notices},
  2022(24):19527--19564, 2022.

\bibitem[BYT0]{TrlifajYassine}
Asmae Ben~Yassine and Jan Trlifaj.
\newblock Flat relative mittag-leffler modules and approximations.
\newblock {\em Journal of Algebra and Its Applications}, 0(0):2450219, 0.

\bibitem[CFM01]{CumForMag}
James Cummings, Matthew Foreman, and Menachem Magidor.
\newblock Squares, scales and stationary reflection.
\newblock {\em Journal of Mathematical Logic}, 1(01):35--98, 2001.

\bibitem[CI16a]{CortesProducts}
Manuel Cort{\'e}s-Izurdiaga.
\newblock Products of flat modules and global dimension relative to
  ${\mathcal{f}}$-mittag-leffler modules.
\newblock {\em Proceedings of the American Mathematical Society},
  144(11):4557--4571, 2016.

\bibitem[CI16b]{ProductsOfFlats}
Manuel Cort{\'e}s-Izurdiaga.
\newblock Products of flat modules and global dimension relative to
  {$\mathcal{F}$}-mittag-leffler modules.
\newblock {\em Proceedings of the American Mathematical Society},
  144(11):4557--4571, 2016.

\bibitem[CI17]{Cortes17}
Manuel Cort{\'e}s-Izurdiaga.
\newblock The cotorsion pair generated by the class of flat {M}ittag-{L}effler
  modules.
\newblock {\em Journal of Algebra}, 479:203--215, 2017.

\bibitem[Eis09]{Eisworth}
Todd Eisworth.
\newblock Successors of singular cardinals.
\newblock In {\em Handbook of set theory}, pages 1229--1350. Springer, 2009.

\bibitem[Ekl75]{Eklof}
Paul~C Eklof.
\newblock On the existence of {$\kappa$}-free abelian groups.
\newblock {\em Proceedings of the American Mathematical Society}, 47(1):65--72,
  1975.

\bibitem[Ekl00]{Eklof2}
Paul~C. Eklof.
\newblock Modules with strange decomposition properties.
\newblock In {\em Infinite length modules ({B}ielefeld, 1998)}, Trends Math.,
  pages 75--87. Birkh\"auser, Basel, 2000.

\bibitem[EM02]{EklofMekler}
Paul~C Eklof and Alan~H Mekler.
\newblock {\em Almost free modules: Set-theoretic methods}.
\newblock Elsevier, 2002.

\bibitem[Fuc70]{Fuchs}
L{\'a}szl{\'o} Fuchs.
\newblock {\em Infinite abelian groups}.
\newblock Academic press, 1970.

\bibitem[GCIT10]{CortesGuilTorrecillas}
Pedro~A. Guil, Manuel Cortés-Izurdiaga, and Blas Torrecillas.
\newblock Accessible subcategories of modules and pathological objects.
\newblock {\em Forum Mathematicum}, 22(3):485--507, 2010.

\bibitem[GT06]{GobelTrlifaj}
R{\"u}diger G{\"o}bel and Jan Trlifaj.
\newblock {\em Approximations and endomorphism algebras of modules}.
\newblock Walter de Gruyter, 2006.

\bibitem[Hig51]{Higman}
Graham Higman.
\newblock Almost free groups.
\newblock {\em Proceedings of the London Mathematical Society}, 3(1):284--290,
  1951.

\bibitem[HS06]{AngeleriSaorin}
Lidia~Angeleri H{\"u}gel and Manuel Saor{\'\i}n.
\newblock Modules with perfect decompositions.
\newblock {\em Mathematica Scandinavica}, pages 19--43, 2006.

\bibitem[HT12]{HerberaTrlifaj}
Dolors Herbera and Jan Trlifaj.
\newblock Almost free modules and mittag-leffler conditions.
\newblock {\em Advances in Mathematics}, 229(6):3436--3467, 2012.

\bibitem[Kur39]{Kurosch}
A.~Kurosch.
\newblock Lokal freie gruppen.
\newblock {\em Comptes Rendus (Doklady) de l'Academie des Science de l'URSS},
  24:99--101, 1939.

\bibitem[Mag82]{Magidor}
Menachem Magidor.
\newblock Reflecting stationary sets1.
\newblock {\em The Journal of Symbolic Logic}, 47(4):755--771, 1982.

\bibitem[MS94]{MagidorShelah}
Menachem Magidor and Saharon Shelah.
\newblock When does almost free imply free?(for groups, transversals, etc.).
\newblock {\em Journal of the American Mathematical Society}, pages 769--830,
  1994.

\bibitem[PA00]{GomezGuil}
Jos{\'e} L~G{\'o}mez Pardo and Pedro A~Guil Asensio.
\newblock Big direct sums of copies of a module have well behaved
  indecomposable decompositions.
\newblock {\em Journal of Algebra}, 232(1):86--93, 2000.

\bibitem[RG71]{Rayunaud}
Michel Raynaud and Laurent Gruson.
\newblock Crit{\`e}res de platitude et de projectivit{\'e}: Techniques de
  ``platification'' d'un module.
\newblock {\em Inventiones mathematicae}, 13:1--89, 1971.

\bibitem[{\v{S}}ar18]{SarochMittagLeffler}
Jan {\v{S}}aroch.
\newblock Approximations and mittag-leffler conditions the tools.
\newblock {\em Israel Journal of mathematics}, 226:737--756, 2018.

\bibitem[{\v{S}}ar23]{Saroch}
Jan {\v{S}}aroch.
\newblock Enochs’ conjecture for small precovering classes of modules.
\newblock {\em Israel Journal of Mathematics}, 255(1):401--415, 2023.

\bibitem[She75]{Shelahcompactness}
Saharon Shelah.
\newblock A compactness theorem for singular cardinals, free algebras,
  whitehead problem and tranversals.
\newblock {\em Israel Journal of Mathematics}, 21:319--349, 1975.

\bibitem[She23]{Shelah}
Saharon Shelah.
\newblock {\em Cardinal arithmetic}.
\newblock Oxford University Press, 2023.

\bibitem[Trl95]{Trlifaj}
Jan Trlifaj.
\newblock Strong incompactness for some nonperfect rings.
\newblock {\em Proc. Amer. Math. Soc.}, 123(1):21--25, 1995.

\bibitem[Xu06]{Xu}
Jinzhong Xu.
\newblock {\em Flat covers of modules}.
\newblock Springer, 2006.

\end{thebibliography}
\end{document}